\numberwithin{equation}{section}
\newtheorem{theorem}{Theorem}[section]
\newtheorem{proposition}[theorem]{Proposition}
\newtheorem{lemma}[theorem]{Lemma}
\newtheorem{corollary}[theorem]{Corollary}
\newtheorem{definition}[theorem]{Definition}
\newtheorem{remark}[theorem]{Remark}
\newtheorem{example}[theorem]{Example}
\theoremstyle{definition}
\newcommand{\dyle}{\displaystyle}
\renewcommand{\dfrac}{\displaystyle\frac}
\newcommand{\brm}{\begin{remark}\rm}
\newcommand{\erm}{\end{remark}}
\newcommand{\brms}{\begin{remark}\rm}
\newcommand{\erms}{\end{remark}}
\newcommand{\bte}{\begin{theorem}}
\newcommand{\ete}{\end{theorem}}
\newcommand{\bpr}{\begin{proposition}}
\newcommand{\epr}{\end{proposition}}
\newcommand{\ble}{\begin{lemma}}
\newcommand{\ele}{\end{lemma}}
\newcommand{\beq}{\begin{equation}}
\newcommand{\eeq}{\end{equation}}
\newcommand{\bdm}{\begin{displaymath}}
\newcommand{\edm}{\end{displaymath}}
\numberwithin{equation}{section}
\newcommand{\bos}{\begin{remark}\rm}
\newcommand{\eos}{\end{remark}}
\newcommand{\ben}{\begin{enumerate}}
\newcommand{\een}{\end{enumerate}}
\renewcommand{\l }{\lambda }
\newcommand{\var }{\varphi }
\newcommand{\be}{\begin{equation}}
\newcommand{\ee}{\end{equation}}
\title[Neumann conditions for the higher order $s$-fractional Laplacian]{Neumann conditions for the higher order $s$-fractional Laplacian  $(-\Delta)^su$ with $s>1$}
\author[B. Barrios]{ Bego\~na Barrios}
\author[L.\ Montoro]{Luigi Montoro}
\author[I.\ Peral]{Ireneo Peral}
\author[F.\ Soria]{Fernando Soria}
\address{B. Barrios, Departamento de An\'{a}lisis Matem\'{a}tico, Universidad de La Laguna, C/. Astrofisico Francisco Sanchez s/n, 38200 Tenerife, Spain}
\email{bbarrios@ull.es}
\address{L. Montoro, Dipartimento di Matematica e Informatica, UNICAL, Ponte Pietro  Bucci 31B,
87036 Arcavacata di Rende, Cosenza, Italy}
\email{montoro@mat.unical.it}
\address{I. Peral,  Departamento de Matem\'{a}ticas, Universidad Aut\'{o}noma de Madrid,
28049 Madrid, Spain.}
\email{ireneo.peral@uam.es}
\address{F. Soria,  Departamento de Matem\'{a}ticas, Universidad Aut\'{o}noma de Madrid,
28049 Madrid, Spain.}
\email{fernando.soria@uam.es}
\keywords{Nonlocal higher order  fractional Laplacian, Neumann problem.}
\thanks{\it 2010 Mathematics Subject Classification:35G10, 35R11, 60G22.}
\thanks{ All authors were partially supported by Ministerio de Economia y Competitividad under grants MTM2013-40846-P and MTM2016-80474-P (Spain).  L. M. was also supported  by  Gruppo Nazionale per l'Analisi Matematica, la Probabilit\`a e le loro Applicazioni (GNAMPA) of the Istituto Nazionale di Alta Matematica (INdAM)}
\begin{document}

\begin{abstract}
In this paper we study a variational Neumann problem for the higher order $s$-fractional Laplacian, with $s>1$. In the process we introduce some non local Neumann boundary conditions that appear in a natural way from a Gauss-like integration formula.
\end{abstract}

\maketitle

%\tableofcontents

%%%%%%%%%%%%%%%%%%%%%%%%%%%%%%%%%%%%%%
\medskip
\section{Introduction and results}\label{introdu}

In this paper we introduce a natural Neumann problem for the higher-order fractional Laplacian $(-\Delta)^su$, $s>1$.

Let us recall that when $0<s<1$ the operator is usually defined, for smooth  functions, by means of the following principal value
\begin{equation}\label{defoperador}
(-\Delta)^s u(x):=c_{N,s}\,P.V.\,\int_{\mathbb{R}^N}\frac{u(x)-u   (y)}{|x-y|^{N+2s}}\,dy,\quad 0<s<1.
\end{equation}
Here,
\begin{equation}\label{eq:cnsig}
c_{N,s}:=\left(\int_{\mathbb{R}^N}\frac{1-\cos(\xi_1)}{|\xi|^{N+2s}}\,d\xi\right)^{-1}=2^{2s-1}\pi^{-\frac{N}{2}}\frac{\Gamma(\frac{N+2s}{2})}{|\Gamma(-s)|},\end{equation}
is a normalized constant. See for example  \cite{BMS, DnPV, DMPS}. It is well-know that for functions, say, in the Schwartz class  $\mathcal{S}(\mathbb{R}^N)$ this operator has an equivalent definition via the Fourier transform that is also valid when $s>1$. More precisely,
\begin{equation}\label{eq:sajhhajhsj}
\widehat{(-\Delta)^{s}}u(\xi)=|\xi|^{2s}\widehat{u}(\xi),\quad \xi\in\mathbb{R}^{N},\, s>0.
\end{equation}
From now on, for the sake of simplicity we will consider here the higher order fractional Laplacian with $s=1+\sigma$, $0<\sigma<1$, so that $s\in(1,2)$. Following the expression given in \eqref{defoperador}, in this case for $u$ smooth, we can also define the operator as
\begin{equation}\label{eq:defoper}
(-\Delta)^{s}u(x)=c_{N,\sigma}\, P.V.\int_{\mathbb R^N}\frac{(-\Delta u(x))-(-\Delta u(y))}{|x-y|^{N+2\sigma}}\,dy,\quad 1<s<2,
\end{equation}
where $c_{N,\sigma}$ is the nomalization constant given in \eqref{eq:cnsig}.  If $0<s<1$ there are many results  regarding  existence, regularity and qualitative properties of solutions of nonlocal problems that involve the operator $(-\Delta)^s$  (see \cite{BDGQ, BDGQ2, DSV, G0, RoS0, RoS1} and the references therein; this list of publications is far from being complete).
 The study of the non local higher order  operator, compared to the better understood lower order non local  operator (i.e. $s\in (0,1)$) has not been entirely developed yet.

 In the higher order case, for example, the lack  in general of a  maximum principle  introduces some new  difficulties.  Some  results on  this subject,  like existence and representation  of solutions, integration by parts, regularity, best Sobolev constants,  maximum principles, Pohozaev identities and  spectral results among others   can be found in   the list of papers~\cite{AJS, CT, DG, G0, G1, MYZ, RoS,Y} or in the corresponding  bibliography of each of them.

\

For what concerns the Neumann problem for the  fractional Laplacian $(-\Delta)^s$, in the case $s\in (0,1)$  and in other similar $s$-nonlocal operators, different approaches have been developed in the literature; see for instance \cite{BCGJ, BGJ, BBC, CK, CERW, CERW1, CERW2, DRoV, G,  MPV, SV}. The reader may find a comparison between some of these different models in  \cite{DRoV}. We also notice here that all the Neumann conditions presented in the previous works regarding  $(-\Delta)^s$, $0<s<1$, are easily seen to approach the classical one ${\partial_\nu u}$ when $s\to 1$.  Nevertheless the one presented in \cite{DRoV} by S. Dipierro, X. Ros-Oton and E. Valdinoci allows us to work in a variational framework and, as the authors describe in Section 2 of the aforementioned paper \cite{DRoV}, it also has a natural probabilistic interpretation. To be more precise, the authors introduce and study the existence and uniqueness of solutions of the following  Neumann problem for the fractional ($s\in (0,1) $) Laplacian
\begin{equation}\label{eq:ESX}
\begin{cases}
(-\Delta)^s u = f(x) &\text{in}\,\,\Omega\\
\mathcal N_s u  =g &\text{in}\,\,\mathbb{R}^N\setminus\overline\Omega,
\end{cases}\end{equation}
where $f,g$ are appropriate problem data. Here, the operator $\mathcal N_s v$ denotes the nonlocal normal derivative defined, for smooth functions, by
\begin{equation}\label{eq:neumannenrico}
\mathcal N_s v(x):=c_{N,s}\int_{\Omega}\frac{v(x)-v(y)}{|x-y|^{N+2s}}\,dy, \qquad x\in \mathbb{R}^N\setminus\overline \Omega.
\end{equation}
This condition can be seen as the natural one  to have the associated  Gauss and Green formulas that allow to use a variational approach in the analysis of problem \eqref{eq:ESX} similar to the local Neumann problem $-\Delta u = f(x)$ in $\Omega$, with $\partial_{\nu} u= g $ on $\partial \Omega$.

\

In the case of higher order operators even in the local case the situation is more involved in general as one can see in, for example, \cite{LA, BL,  C1, V}. In particular in \cite{LA}, by using a {\em Biharmonic Green Formula}, the authors define the Neumann problem for the biharmonic operator $\Delta ^2 u$ and the natural boundary Neumann that, in dimension $N=2$, {rises in the study of the bending of free plates.} As far as we know the problem of establishing a reasonable Neumann condition asociated to $(-\Delta)^su$, $s>1$ has not been developed yet. Therefore, the aim of this work is to introduce  a Neumann problem for the higher order fractional $s$-Laplacian, $s\in (1,2)$, and to
 study the problem
$$
\begin{cases}
(-\Delta)^s u = f(x) &\text{in}\,\,\Omega,\quad 1<s<2\\
\mbox{\it{s-Neumann conditions }} u  =g&\text{in}\,\,\mathbb{R}^N\setminus\overline{\Omega}.
\end{cases}
$$
Here, and throughout the paper, $\Omega$ denotes a smooth bounded domain and our approach is to look for a variational formulation of the problem. Using a similar integration by parts as in the lower order case, $0<s<1$, we can see that for a smooth function  $u$ one has
\begin{equation}\nonumber
\int_{\Omega}(-\Delta)^su\,dx=-\int_{\mathbb R^N\setminus \Omega} \mathcal N_\sigma (-\Delta u) \,dx,
\end{equation}
where
$$\mathcal N_\sigma(-\Delta u)(x)=(-\Delta)_{\Omega}^\sigma (-\Delta u)(x)=\int_{\Omega}\frac{{(-\Delta u)(x)-(-\Delta u)(y)}}{|x-y|^{N+2\sigma}}dy,\, x\in\mathbb{R}^{N}\setminus\overline{\Omega}.$$
However, in order to obtain a Green formula seeking  a variational formulation of the problem, it will be necessary to split this last condition in two parts. Following this path and via a {\em non local Green Formula type},  we are lead to define two non local operators $\mathcal N^1_\sigma, \mathcal N^2_\sigma $, that will play the role of the {\it s-Neumann conditions} for our problem. More precisely, we will study the following
\begin{equation}\label{eq:p}\tag{$\mathcal P$}
\begin{cases}
(-\Delta)^s u = f(x) &\text{in}\,\,\Omega, \quad 1<s<2\\
\mathcal N_\sigma^1 u  =g_1&\text{in}\,\,\mathbb{R}^N\setminus\overline\Omega=: \mathcal C\overline\Omega\\
\mathcal N_\sigma^2 u=g_2 &\text{on}\,\,\partial\Omega,
\end{cases}\end{equation}
where $f$, $g_1$ and $g_2$ satisfy some  suitable hypotheses that we will specify  below and {$\Omega\subset\mathbb R^N$ be a bounded $C^{1,1}$ domain (unless we specify something different as, for example, in Lemma \ref{regional} below)}. The definition of the  operators $\mathcal{N}_{\sigma}^{i}$, $i=1,\, 2$ for suitable  $v\in \mathcal{S}(\mathbb{R}^{N})$  will come in a natural way from the  integration by parts formula stated below in Theorem \ref{pro:intbyparts2} as follows
\begin{equation}\label{eq:neumann1}
\mathcal N^1_\sigma v(x){:=-\operatorname{div}(-\Delta)^{\sigma}_{\Omega}(\nabla v)}(x),  \qquad x\in \mathbb{R}^N\setminus\overline \Omega,
\end{equation}
and
\begin{equation}\label{eq:neumann2}
\mathcal N^2_\sigma v(x){:=(-\Delta)^{\sigma}_{\mathbb{R}^N\setminus \Omega}(\nabla v)(x)\cdot \nu},  \qquad x\in \partial \Omega,
\end{equation}
where  $\nu$ is the outer unit normal field to $\partial \Omega$. Also, $(-\Delta)^{\sigma}_{\mathcal{A}}w$ denotes the regional fractional Laplacian that, for an open set $\mathcal A\subset \mathbb R^N$ and  regular functions $w$,  is defined  by
\begin{equation}\label{eq:numebehebqfgwdg}
(-\Delta)^\sigma_{\mathcal A} w (x):=c_{N,\sigma}\,\lim_{\varepsilon \rightarrow 0^+}\,\int_{\mathcal A\setminus  B_\varepsilon(x)}\frac{w(x)-w(y)}{|x-y|^{N+2\sigma}}\,dy,\quad {x\in\mathbb{R}^{N}}\setminus \partial\mathcal A,
\end{equation} where  $c_{N,\sigma}$ is defined in \eqref{eq:cnsig}. 

{We give now some remarks about the regional operator (see also \cite{Mou} and the references therein)}. First of all we notice that, as we will see, the operator may not be {pointwise well defined} for $x\in \partial\mathcal A$. {For a detailed explanation under which conditions the pointwise definition up to the boundary can be considered see, for instace, \cite[Theorem 5.3]{Q-YM}.} Nonetheless, we observe that the principal value in the previous definition is not needed when $x\in \mathbb{R}^N\setminus\overline{\mathcal A}$ if $w$  is sufficiently regular, say for instance $w\in\mathcal{S}(\mathbb{R}^{N})$. The same is true if $x\in\mathcal{A}$ and $\sigma<1/2$. However if $x\in\mathcal A$ and  $\sigma\geq 1/2$, even if $w\in\mathcal{S}(\mathbb{R}^{N})$, the principal value is required. In fact, if $x\in\mathcal A$ denoting by $\rho(x)=dist(x,\partial\mathcal A)$ and  $B_x=B_{\rho(x)}(x)$ then
\begin{equation}\label{drexler}
(-\Delta)_{\mathcal A}^\sigma w(x)= c_{N,\sigma}\left(\int_{\mathcal A\setminus B_x}\frac{w(x)-w(y)}{|x-y|^{N+2\sigma}} dy+
\lim_{\epsilon\to 0}\int_{B_x\setminus B_\epsilon(x)}\frac{w(x)-w(y)}{|x-y|^{N+2\sigma}}dy\right).
\end{equation}
Using now that $B_x\setminus B_\epsilon(x)$ is a symmetric domain around $x$ it follows that
$$\lim_{\epsilon\to 0}\int_{B_x\setminus B_\epsilon(x)}\frac{w(x)-w(y)}{|x-y|^{N+2\sigma}}dy=
\lim_{\epsilon\to 0}\int_{B_x\setminus B_\epsilon(x)}\frac{w(x)-w(y)-\nabla w(x)(x-y)}{|x-y|^{N+2\sigma}}dy.$$
Since the previous integral is absolutely convergent for example if $w\in \mathcal{C}^{1,1}$, from \eqref{drexler} we get that, when $\sigma\geq 1/2$,
\begin{equation}\label{drexler2}
(-\Delta)_{\mathcal A}^\sigma w(x)=c_{N,\sigma}
\begin{cases}
\displaystyle \int_{\mathcal A\setminus B_x}\frac{w(x)-w(y)}{|x-y|^{N+2\sigma}} dy+\int_{B_x}\frac{w(x)-w(y)-\nabla w(x)(x-y)}{|x-y|^{N+2\sigma}}dy,&\text{if}\,\,x\in \mathcal A,\\ \\
\displaystyle\int_{\mathcal A} \frac{w(x)-w(y)}{|x-y|^{N+2\sigma}}dy,&\text{if}\,\,x\in\mathbb{R}^N\setminus\overline{\mathcal A}.
\end{cases}
\end{equation}
{Nevertheless, according to Theorem B in \cite{Mou}  the operator defined by \eqref{eq:neumann2} can be undestood in the trace sense. In this way will be considered hereafter.}

\

Before announcing the main result of this work we introduce the following notation and definitions:
\begin{definition}\label{def:affinfunctions}
By  $\mathcal P_{1}(\mathbb R^N)$ we denote the vector space of all  polynomials of degree one  with real coefficients, that is,
\[\mathcal P_{1}(\mathbb R^N)=\left\{p(x):\mathbb R^N\rightarrow \mathbb R\,|\ p(x)= c_0+ (c,x),\,\, \text{with}\,\,   c_0\in \mathbb R\,\,   \text{and}\,\,c,x \in \mathbb R^N\right\},\]
where  $(\cdot, \cdot):\mathbb R^N\times \mathbb R^N \rightarrow \mathbb R$ represents the Euclidean scalar product in $\mathbb R^N$.
\end{definition}

We define also  $\dot{H}^s(\Omega)$ as the class of functions given by
\begin{equation}\label{spacebase}
\dot{H}^s(\Omega)=\left\{\, u:\mathbb{R}^N\rightarrow\mathbb{R}\,|\, u \hbox{ weakly differentiable, so that  } \, D(u)<\infty \right\},\,\,
\end{equation}
where
\begin{equation*}
D(u):=\sqrt{
\iint_{Q(\Omega)} \dfrac{|\nabla u(x)-\nabla u(y)|^2}{|x-y|^{N+2\sigma}}dxdy},
\end{equation*}
and
\begin{equation}\nonumber
Q(\Omega):=\mathbb{R}^{2N}\setminus(\mathbb{R}^N\setminus \Omega)^2.
\end{equation}
Notice that $\mathcal P_{1}(\mathbb R^N)\subset \dot{H}^s(\Omega)$.

Next we will define the class of  admissible data.

Let $g_1\in {L^1( \mathbb{R}^N\setminus\Omega, |x|^2\, dx )}{\cap L^1(\mathbb{R}^N\setminus\Omega)}$.  Associated to $g_1$ we consider the positive measure in $\mathbb{R}^N$, absolutely continuous with respect to Lebesgue measure, defined by
\begin{equation}\label{dmu}
d\mu_{g_1}=(\chi_\Omega+|g_1|\chi_{\mathbb{R}^N\setminus\Omega})dx,
\end{equation}
and the class of functions
\begin{equation}\label{ortogonal}
H^{s,0}_{g_1}(\Omega):=\Big\{u\in \dot{H}^s(\Omega):\, \int_{\mathbb{R}^{N}}{u p\, d\mu_{g_1}}=0,\, {\forall} p\in\mathcal{P}_{1}(\mathbb{R}^{N})\Big\}.
\end{equation}
For the associated measure $d\mu_{g_1}$ we consider the following Rayleigh quotient
\begin{equation}\label{espectral}
\lambda_1(g_1)=\inf_{u\in H^{s,0}_{g_1}, \, u\ne 0}\dfrac{\dyle\int\int_{Q(\Omega)} \dfrac{|\nabla u(x)-\nabla u(y)|^2}{|x-y|^{N+2\sigma}}dxdy}{\dyle\int_{\mathbb{R}^N}u^2d\mu_{g_1}}.
\end{equation}
\begin{definition}\label{admissible}\textit{($(\mathcal A_{(f,g_1,g_2)})$ assumptions).}

 We say that $(f,g_1,g_2)$ is an admissible data triplet if
\begin{enumerate}
\item $f\in L^2(\Omega)$
\item $g_1\in {L^1( \mathbb{R}^N\setminus\Omega, |x|^2\, dx )}{\cap L^1(\mathbb{R}^N\setminus\Omega)}$ and the corresponding measure
$d\mu_{g_1}$ satisfy that  the spectral value $\lambda_1(g_1)$ defined  by \eqref{espectral} is strictly positive.
\item $g_2\in L^2(\partial \Omega)$.
\end{enumerate}
\end{definition}

As a direct consequence of the definition, given an admissible $g_1$ we have that
$$\int_{\mathbb{R}^N}u^2d\mu_{g_1}<+\infty, \hbox{ for all } u\in  H^{s,0}_{g_1}(\Omega).$$
Also, by the hypotheses on integrability of $g_1$, one has
$$\int_{\mathbb{R}^N}p^2d\mu_{g_1}<+\infty, \hbox{ for all } p\in \mathcal P_{1}(\mathbb R^N).$$

\begin{example}
Every function  $g_1$  such that $g_1\in L^p(\mathbb{R}^N\setminus\Omega)$ with $p>\frac N2$ and with compact support satisfies condition $(2)$ in  Definition \ref{admissible} (see Lemma \ref{equivalencia} below).
\end{example}
Now we are ready to state  the main result of the paper:

\begin{theorem}\label{main}
Let $\Omega\subset\mathbb R^N$ be a bounded $C^{1,1}$ domain and let us suppose that the assumptions $(\mathcal A_{(f,g_1,g_2)})$ hold.
Then, the problem \eqref{eq:p} has a weak solution (in the sense of Definition \ref{def:weaksolution}), if and only if the following compatibility condition hold{s}
\begin{equation}\label{eq:unodiez}
\int_{\Omega} f p \, dx +\int_{\mathbb R^N\setminus \Omega}g_1p \, dx+\int_{\partial \Omega}g_2 p\, dS=0, \qquad \text{for all }\,\,  p\in \mathcal P_1(\mathbb R^N).
\end{equation}
Moreover, if \eqref{eq:unodiez} holds,  the solution is unique up to an affine function $p\in \mathcal P_1(\mathbb R^N).$
\end{theorem}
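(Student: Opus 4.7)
The plan is to use the direct method of the calculus of variations on the constrained space $H^{s,0}_{g_1}(\Omega)$, exploiting the integration by parts identity of Theorem \ref{pro:intbyparts2}. Introduce the bilinear form
$$B(u,v) := c_{N,\sigma}\iint_{Q(\Omega)}\frac{(\nabla u(x)-\nabla u(y))\cdot(\nabla v(x)-\nabla v(y))}{|x-y|^{N+2\sigma}}\,dx\,dy$$
and the linear functional
$$L(v) := \int_\Omega f v\,dx + \int_{\mathbb{R}^N\setminus\Omega} g_1 v\,dx + \int_{\partial\Omega} g_2 v\,dS,$$
so that a weak solution in the sense of Definition \ref{def:weaksolution} is a function $u\in\dot H^s(\Omega)$ satisfying $B(u,v)=L(v)$ for every admissible test $v$. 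Necessity of \eqref{eq:unodiez} is immediate: for any $p\in\mathcal P_1(\mathbb R^N)$, $\nabla p$ is constant, so $B(u,p)=0$, and testing with $v=p$ forces $L(p)=0$, which is precisely \eqref{eq:unodiez}.

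For sufficiency, I would minimize the energy $J(u) := \tfrac{1}{2}B(u,u)-L(u)$ on $H^{s,0}_{g_1}(\Omega)$. The hypothesis $\lambda_1(g_1)>0$ provides the Poincar\'e-type estimate
$$\int_{\mathbb R^N} u^2\,d\mu_{g_1}\le \lambda_1(g_1)^{-1}\,D(u)^2\quad\text{for all }u\in H^{s,0}_{g_1}(\Omega),$$
so $D(\cdot)$ is an equivalent norm on this space. Combined with a trace inequality on $\dot H^s(\Omega)$ (available because $s>1/2$), this makes $L$ continuous on $H^{s,0}_{g_1}(\Omega)$ and $J$ coercive and bounded below. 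Weak lower semicontinuity of $B(u,u)$, extraction of a weakly convergent minimizing subsequence, and passing to the limit then yield a minimizer $u_*\in H^{s,0}_{g_1}(\Omega)$.

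To promote $u_*$ to a weak solution on the full space, note that $\mathcal P_1(\mathbb R^N)$ is a finite-dimensional subspace of $L^2(d\mu_{g_1})$ (by the integrability hypotheses on $g_1$), so every $v\in\dot H^s(\Omega)$ admits a unique decomposition $v=\tilde v + p$, with $p\in\mathcal P_1$ and $\tilde v\in H^{s,0}_{g_1}(\Omega)$, via the $L^2(d\mu_{g_1})$-orthogonal projection onto $\mathcal P_1$. The Euler--Lagrange equation of the constrained minimization gives $B(u_*,\tilde v)=L(\tilde v)$, while $B(u_*,p)=0$ and $L(p)=0$ by the compatibility assumption \eqref{eq:unodiez}; summing both identities proves $B(u_*,v)=L(v)$ for every $v\in\dot H^s(\Omega)$. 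Uniqueness up to $\mathcal P_1$ is then immediate: two weak solutions $u_1,u_2$ differ by a function $w$ with $B(w,w)=D(w)^2=0$, so $\nabla w(x)=\nabla w(y)$ for a.e.\ $(x,y)\in Q(\Omega)$, which forces $w\in\mathcal P_1(\mathbb R^N)$.

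The main technical obstacle is establishing the Poincar\'e-type inequality on $H^{s,0}_{g_1}(\Omega)$ together with a well-behaved trace onto $\partial\Omega$; in particular, one must confirm that the finitely many orthogonality constraints in \eqref{ortogonal} genuinely eliminate the degeneracy along the kernel $\mathcal P_1$ of $D(\cdot)$, which is exactly the content of $\lambda_1(g_1)>0$. Once these functional-analytic ingredients, and the fact that the $L^2(d\mu_{g_1})$-projection onto $\mathcal P_1$ is well defined for every $v\in\dot H^s(\Omega)$, are secured, the direct-method and decomposition arguments above proceed along standard variational lines.
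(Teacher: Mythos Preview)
Your proposal is correct and follows essentially the same route as the paper. The paper also proves necessity by testing with $p\in\mathcal P_1(\mathbb R^N)$ (Lemma~\ref{pro:necesscondit}), and for sufficiency minimizes the same energy functional $J$ using the direct method, with coercivity supplied by the Poincar\'e-type inequality encoded in $\lambda_1(g_1)>0$ (Lemma~\ref{equivalencia}, Theorem~\ref{key_th}); the only cosmetic difference is that the paper phrases the minimization on the quotient space $\mathcal H^s=H^s_{\mathcal N(g_1,g_2)}(\Omega)/\mathcal P_1(\mathbb R^N)$ rather than on the orthogonal complement $H^{s,0}_{g_1}(\Omega)$, but it explicitly identifies the two via the Hilbert projection (see \eqref{eq:wwww}), so your decomposition $v=\tilde v+p$ is exactly the same mechanism.
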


%We want to mention here that in this work we are going to use  two different approaches in order to  prove  the existence and uniqueness result for the Neumann problem in the case of higher order operator. The assumption $(\mathcal A_{(f,g_1,g_2)})$,   seems to be more natural and it is similar to the one that  appears  in the formulation of  the Neumann problem in the the local case (see for instance \cite{dibenedetto}). Under this assumption we  will exploit  a variational approach that will also allows us  to define the Neumann problem in the nonlinear nonlocal case (see Section \ref{sec:plaplaciano}). The assumption $(\mathcal A_{(f,g_1,g_2)})-Case \, B$, is similar to the one appeared in \cite{DRoV} when $0 < s < 1$. In this case we will exploit an  argument based on the Fredholm  alternative that has interest in itself because can be applied to other nonlinear operators.

\

The paper is organized as follows: in Section 2 we present the integration by parts formula that shows the key point in order to understand the variational structure of the problem \eqref{eq:p}. In Section 3 we give some preliminaries related to the functional framework associated to problem \eqref{eq:p} and we introduce the proper notion of solution that will be used along this work. Section 4 deals with the proof of Theorem \ref{main}. In Section 5 we give the complete description of the structure of the eigenvalues and eigenfunctions of \eqref{eq:p}. Finally, in Section 6 we briefly comment other problems and results related with the one studied here.

\

Throughout the paper, generic fixed numerical constants will be denoted by $C$, in some cases with a subscript and/or a superscript, and will be allowed to vary within a single line or formula.
\section{Computations in  $\mathbb{R}^N$ and a motivation of the problem \eqref{eq:p}}
The main objective of this section is to prove a new integration by parts formula associated to $(-\Delta)^s$, $1<s<2$. In the sequel by $ Q(\Omega)$, we mean
\begin{equation}\nonumber
Q(\Omega):=\mathbb{R}^{2N}\setminus(\mathbb{R}^N\setminus \Omega)^2.
\end{equation}
First of all we need the following result that allows us to write the fractional operator in a divergence form.
\begin{proposition}\label{pro:equivfraclapl} Given $u\in \mathcal \mathcal{S}(\mathbb{R}^{N}) $ and $s=m+\sigma$ with $m\in\mathbb N$ and $\sigma \in (0,1).$
The operator $(-\Delta)^s u$ can be expressed  in one of the following ways
\begin{equation}\label{eq:ksakjasjkajahdshhiohio}
(-\Delta)^su=-\Delta^m\big((-\Delta)^\sigma u\big)=(-\Delta)^\sigma \big(-\Delta^m u\big)= -\operatorname{div}(-\Delta)^{\frac{m-1}{2}}(-\Delta)^\sigma (-\Delta)^{\frac{m-1}{2}} \nabla u,
\end{equation}
if $m$ is odd, or
\begin{equation}\nonumber
(-\Delta)^su=-\Delta^m\big((-\Delta)^\sigma u\big)=(-\Delta)^\sigma \big(-\Delta^m u\big)= (-\Delta)^{\frac{m}{2}}(-\Delta)^\sigma (-\Delta)^{\frac{m}{2}} u,
\end{equation}
if $m$ is even.
\end{proposition}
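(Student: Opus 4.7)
The plan is to verify each of the identities in \eqref{eq:ksakjasjkajahdshhiohio} by computing Fourier symbols and invoking the characterization \eqref{eq:sajhhajhsj} of $(-\Delta)^s$ as a Fourier multiplier with symbol $|\xi|^{2s}$. Because $u\in\mathcal{S}(\mathbb{R}^N)$, for every $\alpha\ge 0$ the function $(-\Delta)^\alpha u$ is a smooth tempered distribution whose Fourier transform equals $|\xi|^{2\alpha}\widehat u(\xi)$, and each intermediate operator appearing on the right-hand sides is itself a Fourier multiplier sending $\mathcal{S}$ into the class of smooth tempered distributions. Since such multipliers commute, I can freely factor $|\xi|^{2s}=|\xi|^{2m}\cdot|\xi|^{2\sigma}$ in whichever order is convenient, and all compositions in \eqref{eq:ksakjasjkajahdshhiohio} are unambiguously defined on $\mathcal{S}(\mathbb{R}^N)$.

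First I would dispatch the two equalities $(-\Delta)^s u=-\Delta^m((-\Delta)^\sigma u)=(-\Delta)^\sigma(-\Delta^m u)$, valid irrespective of the parity of $m$. Interpreting $-\Delta^m$ as the $m$-fold iterate $(-\Delta)^m$, which is a Fourier multiplier with symbol $|\xi|^{2m}$, the Fourier transform of either composition applied to $u$ equals $|\xi|^{2m}|\xi|^{2\sigma}\widehat u(\xi)=|\xi|^{2s}\widehat u(\xi)$, and the inverse Fourier transform gives the claim.

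The divergence-form representation when $m$ is odd requires a bit more bookkeeping because a vector field enters the picture. Setting $k=(m-1)/2\in\mathbb{N}$, I would use the standard identities $\widehat{\nabla v}(\xi)=i\xi\,\widehat v(\xi)$ and $\widehat{\operatorname{div}F}(\xi)=i\xi\cdot\widehat F(\xi)$. Since $(-\Delta)^k\nabla u$ has Schwartz-class components, applying $(-\Delta)^\sigma$ and then $(-\Delta)^k$ componentwise makes sense, and the final divergence is taken in the classical sense. The total Fourier symbol of the composite operator applied to $u$ is then
\[
-\,i\xi\cdot\bigl(|\xi|^{2k}|\xi|^{2\sigma}|\xi|^{2k}\,i\xi\,\widehat u(\xi)\bigr)=|\xi|^{2}\,|\xi|^{4k+2\sigma}\,\widehat u(\xi)=|\xi|^{2m+2\sigma}\widehat u(\xi)=|\xi|^{2s}\widehat u(\xi),
\]
which matches $\widehat{(-\Delta)^s u}(\xi)$ as desired.

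The even case with $k=m/2\in\mathbb{N}$ is even simpler: the composition $(-\Delta)^{m/2}(-\Delta)^\sigma(-\Delta)^{m/2}$ is a scalar Fourier multiplier with symbol $|\xi|^{2k}|\xi|^{2\sigma}|\xi|^{2k}=|\xi|^{2m+2\sigma}=|\xi|^{2s}$, so it coincides with $(-\Delta)^s$ on $\mathcal{S}(\mathbb{R}^N)$. I do not expect any substantive obstacle in this argument, since everything reduces to elementary Fourier bookkeeping; the only two points deserving care are the notational convention $-\Delta^m=(-\Delta)^m$ implicit in the statement, and the tracking of the factors of $i$ arising from the gradient and the divergence when handling the divergence-form identity in the odd case.
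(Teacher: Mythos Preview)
Your proposal is correct and follows essentially the same approach as the paper: both arguments reduce the identities to a computation of Fourier symbols, using that each operator in the compositions is a Fourier multiplier and that the combined symbol equals $|\xi|^{2s}$. The paper only spells out the divergence-form identity in the odd case (writing the divergence as a sum over $j$ and tracking the factors $(i\xi_j)$), leaving the remaining equalities to the reader, whereas you treat all cases explicitly; you also flag the notational convention $-\Delta^m=(-\Delta)^m$, which the paper uses without comment.
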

\begin{proof}
It is sufficient to use the Fourier transform $\mathscr F (\cdot)$ and the multiplicative semigroup property. We prove the last equality in \eqref{eq:ksakjasjkajahdshhiohio}. The others follow in the same way.
\begin{equation}\label{eq:gunsandroses}
 -\operatorname{div}(-\Delta)^{\frac{m-1}{2}}(-\Delta)^\sigma (-\Delta)^{\frac{m-1}{2}} \nabla u=-\sum_{j=1}^N\partial_j(-\Delta)^{\frac{m-1}{2}}(-\Delta)^\sigma (-\Delta)^{\frac{m-1}{2}} \partial_j u.
\end{equation}
Using Fourier transform in \eqref{eq:gunsandroses}, we obtain
\begin{eqnarray}\label{eq:sakdhjdhiuiu}
&&\mathscr F\big (-\operatorname{div}(-\Delta)^{\frac{m-1}{2}}(-\Delta)^\sigma (-\Delta)^{\frac{m-1}{2}} \nabla u \big)= - \sum_{j=1}^N (i\xi_j)|\xi|^{(m-1)}|\xi|^{2\sigma}|\xi|^{(m-1)}(i\xi_j) \mathscr F u\\\nonumber
&&=|\xi|^{2(m+\sigma)}\mathscr F u
\end{eqnarray}
Recalling \eqref{eq:sajhhajhsj}, from \eqref{eq:sakdhjdhiuiu} we deduce
\begin{equation}\nonumber
(-\Delta)^su:=\mathscr F^{-1}\Big(|\xi|^{2(m+\sigma)}\mathscr F u\Big)= -\operatorname{div}(-\Delta)^{\frac{m-1}{2}}(-\Delta)^\sigma (-\Delta)^{\frac{m-1}{2}} \nabla u.
\end{equation}
\end{proof}
\subsection{Integration by parts formula}
In this section we  prove different integration  formulas that  will be essential to define a  variational formulation of the Neumann boundary conditions.

To simplify the  next results, recalling \eqref{eq:numebehebqfgwdg}, for $u\in \mathcal{S}(\mathbb{R}^{N})$ and $\Omega$ {a smooth domain,}  we can write
\[
(-\Delta)^{\sigma}u=(-\Delta)^{\sigma}_{\Omega}u+(-\Delta)^{\sigma}_{\mathbb{R}^N\setminus \overline{\Omega}}u, \quad \mbox{a.e.}
\]
The operators $(-\Delta)^{\sigma}_{\Omega}u$  and    $ (-\Delta)^{\sigma}_{\mathbb{R}^N\setminus \overline{\Omega}}u$ are the regional $\sigma$-Laplacian for $\Omega$ and $\mathbb{R}^N\setminus \overline{\Omega}$ respectively.   We refer for instante to \cite{DnPV}, \cite{Guan}, \cite{Mou} and the references therein for  the properties of the regional fractional laplacian.

For the reader convenience we include the following result that will be used in the next calculations.
\begin{lemma}\label{regional} Let $\Omega$ be a $\mathcal{C}^{1,1}$  domain that could be unbounded such that its boundary, $\partial\Omega$, is a compact set.
Then for all $u\in \mathcal{S}(\mathbb{R}^{N})$,
$$\int_\Omega (-\Delta)^\sigma_\Omega u(x) dx=0.$$
\end{lemma}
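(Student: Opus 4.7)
The idea is to exploit the antisymmetry of the integrand $K(x,y):=(u(x)-u(y))/|x-y|^{N+2\sigma}$ under the involution $(x,y)\leftrightarrow(y,x)$, together with the invariance of $\Omega\times\Omega$ under the same involution. If the principal value were not needed (i.e.\ $\sigma<1/2$), then $K\in L^1(\Omega\times\Omega)$ because $|u(x)-u(y)|\leq\|\nabla u\|_\infty|x-y|$ near the diagonal, and Fubini plus antisymmetry would instantly give $\int_\Omega(-\Delta)^\sigma_\Omega u(x)\,dx=0$. For $\sigma\geq 1/2$ we must introduce a symmetric truncation of the diagonal.

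For each $\varepsilon>0$ define $T_\varepsilon u(x):=c_{N,\sigma}\int_{\Omega\setminus B_\varepsilon(x)}K(x,y)\,dy$, so that $T_\varepsilon u(x)\to(-\Delta)^\sigma_\Omega u(x)$ pointwise in $\Omega$ as $\varepsilon\to 0^+$. The set $D_\varepsilon:=\{(x,y)\in\Omega\times\Omega:|x-y|>\varepsilon\}$ is symmetric under the swap, and $K\in L^1(D_\varepsilon)$ since
$$\iint_{D_\varepsilon}|K(x,y)|\,dx\,dy\leq 2\|u\|_{L^1}\int_{|z|>\varepsilon}|z|^{-N-2\sigma}\,dz=C\,\varepsilon^{-2\sigma}\|u\|_{L^1}.$$
Fubini together with antisymmetry therefore yields $\int_\Omega T_\varepsilon u(x)\,dx=0$ for every $\varepsilon>0$.

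To pass to the limit as $\varepsilon\to 0^+$, split $\Omega=\Omega_\varepsilon^{in}\cup\Omega_\varepsilon^{bd}$ with $\Omega_\varepsilon^{bd}:=\{x\in\Omega:\rho(x)\leq 2\varepsilon\}$, where $\rho(x):=\mathrm{dist}(x,\partial\Omega)$. On $\Omega_\varepsilon^{in}$ decompose $\Omega\setminus B_\varepsilon(x)=(B_x\setminus B_\varepsilon(x))\cup(\Omega\setminus B_x)$ with $B_x:=B_{\rho(x)}(x)\subset\Omega$. The linear part of the Taylor expansion of $u$ at $x$ cancels by radial symmetry on the annulus $B_x\setminus B_\varepsilon(x)$, leaving an $\varepsilon$-uniform bound $|T_\varepsilon u(x)|\leq C\bigl(\rho(x)^{2-2\sigma}+\rho(x)^{1-2\sigma}\bigr)$ from $\|D^2 u\|_\infty$ applied to the quadratic remainder and from $|u(x)-u(y)|\leq\|\nabla u\|_\infty|x-y|$ on $\Omega\setminus B_x$ (the Schwartz decay of $u$ providing any extra decay needed at infinity in the unbounded case). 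This bound is in $L^1(\Omega)$ because $\sigma<1$ and $\partial\Omega$ is compact and $\mathcal C^{1,1}$, so dominated convergence gives $\int_{\Omega_\varepsilon^{in}}T_\varepsilon u(x)\,dx\to\int_\Omega(-\Delta)^\sigma_\Omega u(x)\,dx$. On the narrow strip, the crude estimate $|T_\varepsilon u(x)|\leq C\varepsilon^{1-2\sigma}$ combined with $|\Omega_\varepsilon^{bd}|=O(\varepsilon)$ (from the $\mathcal C^{1,1}$ regularity of $\partial\Omega$) yields $\bigl|\int_{\Omega_\varepsilon^{bd}}T_\varepsilon u\bigr|\leq C\varepsilon^{2-2\sigma}\to 0$, and the proof is complete.

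The main technical obstacle is the construction of the uniform-in-$\varepsilon$ integrable majorant for $T_\varepsilon u$ in the range $\sigma\geq 1/2$, where the pointwise integrand $K(x,y)$ is not integrable across the diagonal. The Taylor-cancellation trick inside $B_x$ and the $\mathcal C^{1,1}$-thin boundary layer estimate are the two ingredients that make the principal-value structure of $(-\Delta)^\sigma_\Omega$ compatible with the symmetric-swap argument.
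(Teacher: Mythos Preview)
Your argument is correct and follows essentially the same route as the paper: antisymmetry on the symmetrically truncated set $D_\varepsilon=\{(x,y)\in\Omega\times\Omega:|x-y|>\varepsilon\}$ gives $\int_\Omega T_\varepsilon u=0$, and the passage $\varepsilon\to0$ is justified via an $L^1$ majorant built from the second-order Taylor cancellation on the annulus $B_{\rho(x)}(x)\setminus B_\varepsilon(x)$ together with the Lipschitz bound on $\Omega\setminus B_{\rho(x)}(x)$. The only organizational difference is that you split $\Omega=\Omega_\varepsilon^{in}\cup\Omega_\varepsilon^{bd}$ and treat the thin boundary layer by the crude bound $|T_\varepsilon u|\le C\varepsilon^{1-2\sigma}$ times $|\Omega_\varepsilon^{bd}|=O(\varepsilon)$, whereas the paper avoids the split by observing that when $\varepsilon\ge\rho(x)$ one has $\Omega\setminus B_\varepsilon(x)\subset\Omega\setminus B_{\rho(x)}(x)$, so the \emph{same} majorant $h(x)\sim\rho(x)^{1-2\sigma}$ (or $-\log\rho(x)$ at $\sigma=1/2$) works uniformly for all $x\in\Omega$ and all $\varepsilon>0$; dominated convergence then applies directly on $\Omega$. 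Two small cosmetic points: your crude strip bound should read $C|\log\varepsilon|$ rather than $C\varepsilon^{1-2\sigma}$ when $\sigma=1/2$ (the conclusion $\int_{\Omega_\varepsilon^{bd}}|T_\varepsilon u|\to0$ is unaffected), and in the unbounded case your majorant $\rho(x)^{2-2\sigma}$ is not integrable at infinity, so the Schwartz-decay remark you make is indeed needed and should be implemented by truncating the Taylor ball at a fixed radius rather than $\rho(x)$ for large $|x|$---exactly as the paper does when it invokes the decay $|(-\Delta)^\sigma_\Omega u(x)|\le C|x|^{-N-2\sigma}$ far from $\partial\Omega$.
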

\begin{proof} Assume that $\Omega$ is bounded; if $0<\sigma<\frac 12$ the result is obvious given that the function
$$G(x,y)=\frac{u(x)-u   (y)}{|x-y|^{N+2\sigma}}\in L^1(\Omega\times\Omega),$$
and $G(x,y)=-G(y,x)$.

We consider now  the case $\frac 12\le \sigma<1$ in which the principal value is present.
Consider,
$$f_\epsilon(x)= \int_{\Omega\setminus B_\epsilon(x)}\frac{u(x)-u   (y)}{|x-y|^{N+2\sigma}}dy,\, \quad  \epsilon>0,\,  x\in\Omega.$$
If we are able to find $h(x)\in L^1(\Omega)$ such that $|f_\epsilon(x)|\le h(x)$, $x\in \Omega$ then the result follows by the Dominated Convergence Theorem; indeed
$$\int_\Omega (-\Delta)^\sigma_\Omega u(x) dx=\int_\Omega \lim_{\epsilon \to 0} f_\epsilon(x)dx=\lim_{\epsilon \to 0}\iint_{\Omega\times\Omega\setminus\{(x,y)\,|\, |x-y|<\epsilon\}}\left(\frac{u(x)-u   (y)}{|x-y|^{N+2\sigma}}\right)dxdy=0$$
by the antisymmetry, as above.

To find a function $h(x)\in L^1(\Omega)$ majoring the $f_\epsilon(x)$ family, fix $x\in \Omega$.
Define $\rho(x)=dist(x,\partial\Omega)$,  $B_x=B_{\rho(x)}(x)$ and consider first the case $0<\epsilon<\rho(x)$. Then
$$f_\epsilon(x)=\int_{\Omega\setminus B_x}\frac{u(x)-u (y)}{|x-y|^{N+2\sigma}} dy+
\int_{B_x\setminus B_\epsilon(x)}\frac{u(x)-u(y)}{|x-y|^{N+2\sigma}}dy.$$
Now by antisymmetry we     find that
$$\left|\int_{B_x\setminus B_\epsilon(x)}\frac{u(x)-u(y)}{|x-y|^{N+2\sigma}}dy\right| \le
\int_{B_x}\left|\frac{u(x)-u(y)-\nabla u(x)(x-y)}{|x-y|^{N+2\sigma}}\right|dy$$
where the last term has a quadratic cancelation and becomes a term in $L^1(\Omega)$. Finally,  we estimate the first term  as follows. Take $R=2\mbox{diam}(\Omega)$
$$\left|\int_{\Omega\setminus B_x}\frac{u(x)-u (y)}{|x-y|^{N+2\sigma}} dy\right|\le \int_{\Omega\setminus B_x}\left|\frac{u(x)-u (y)}{|x-y|^{N+2\sigma}}\right| dy
\le C_1 \int_{B_R(x)\setminus B_x}\frac{dy}{|x-y|^{N+2\sigma-1}}\le C_2 \int_{\rho(x)}^R \frac{dt}{t^{2\sigma}}. $$
The case $ \epsilon\ge\rho(x)$ is simpler since then $\Omega\setminus B_x\supset \Omega\setminus B_\epsilon(x)$.
Summarizing,
$$|f_\epsilon(x)|\le\begin{cases}
 O(1),\quad  0<\sigma<\frac 12\\
 -\log \rho(x)+O(1),\quad \sigma=\frac 12\\
 \dfrac 1{\rho(x)^{2\sigma-1}}+O(1), \quad \frac 12<\sigma <1.
\end{cases}
$$
If $\Omega$ is unbounded, inside of a ball containing the boundary we reproduce the same calculations that in the bounded case and outside we take into account   the decay od the kernel, that is
$$|(-\Delta)_\Omega u(x)|\le \frac{C}{|x|^{N+2\sigma}}.$$
Then we apply again the Dominated Convergence Theorem to conclude.
\end{proof}

Now we can establish the following
\begin{proposition}\label{pro:intbyparts1}Let $u\in \mathcal{S}(\mathbb{R}^{N})$, $s=1+\sigma$ and $\Omega\subseteq\mathbb{R}^{N}$ be a smooth domain, possibly unbounded, with compact boundary. Then
\begin{equation}\nonumber
\int_{\Omega}(-\Delta)^su\,dx=-\int_{\mathbb R^N\setminus \Omega} \mathcal N_\sigma (-\Delta u) \,dx,
\end{equation}
where
$$\mathcal N_\sigma(-\Delta u)(x)=(-\Delta)_{\Omega}^\sigma (-\Delta u)(x)=\int_{\Omega}\frac{{(-\Delta u(x))-(-\Delta u(y))}}{|x-y|^{N+2\sigma}}dy,\, x\in \mathbb R^N\setminus \overline{\Omega}.$$
\end{proposition}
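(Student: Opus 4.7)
The strategy is to reduce $(-\Delta)^{s}$ to the composition $(-\Delta)^{\sigma}\circ(-\Delta)$, then split the nonlocal operator into its regional pieces and exploit Lemma \ref{regional}, which says the regional $\sigma$-Laplacian integrates to zero over its own domain.

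\medskip

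\textbf{Step 1 (reduction).} Set $v := -\Delta u$. Since $u\in \mathcal S(\mathbb R^N)$ we have $v\in\mathcal S(\mathbb R^N)$ as well. By Proposition \ref{pro:equivfraclapl} with $m=1$, the equality $(-\Delta)^{s} u = (-\Delta)^{\sigma}(-\Delta u) = (-\Delta)^{\sigma} v$ holds pointwise.

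\medskip

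\textbf{Step 2 (splitting).} For every $x\in\Omega$, decompose
\[
(-\Delta)^{\sigma}v(x) = (-\Delta)^{\sigma}_{\Omega}v(x)+(-\Delta)^{\sigma}_{\mathbb R^N\setminus\overline\Omega}v(x),
\]
which is the natural decomposition of the principal-value integral on $\mathbb R^N$ into the contributions from $\Omega$ and from $\mathbb R^N\setminus\overline\Omega$ (the boundary having measure zero). Integrating over $\Omega$ and applying Lemma \ref{regional} to $v$ (note that $\Omega$ has compact boundary and is $C^{1,1}$, and $v\in\mathcal S(\mathbb R^N)$), the first term contributes zero, leaving
\[
\int_{\Omega}(-\Delta)^{s}u\,dx \;=\; \int_{\Omega}(-\Delta)^{\sigma}_{\mathbb R^N\setminus\overline\Omega}v(x)\,dx.
\]

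\medskip

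\textbf{Step 3 (Fubini and antisymmetry).} Because $x\in\Omega$ and the integration variable runs in $\mathbb R^N\setminus\overline\Omega$, no principal value is needed and
\[
(-\Delta)^{\sigma}_{\mathbb R^N\setminus\overline\Omega}v(x)=c_{N,\sigma}\int_{\mathbb R^N\setminus\overline\Omega}\frac{v(x)-v(y)}{|x-y|^{N+2\sigma}}\,dy.
\]
I would then apply Fubini to swap the order of integration, and recognize that $c_{N,\sigma}\int_{\Omega}\frac{v(y)-v(x)}{|x-y|^{N+2\sigma}}\,dx=(-\Delta)^{\sigma}_{\Omega}v(y)=\mathcal N_\sigma(-\Delta u)(y)$ for $y\in\mathbb R^N\setminus\overline\Omega$. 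After the antisymmetry sign flip $v(x)-v(y)=-(v(y)-v(x))$, this yields
\[
\int_{\Omega}(-\Delta)^{s}u\,dx \;=\; -\int_{\mathbb R^N\setminus\overline\Omega}\mathcal N_\sigma(-\Delta u)(y)\,dy,
\]
as required.

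\medskip

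\textbf{Main obstacle.} The only delicate point is justifying Fubini, since the kernel is singular as $x\to y$ on the common boundary $\partial\Omega$. I would handle this by the bound $|v(x)-v(y)|\le C|x-y|$, coming from $v\in\mathcal S(\mathbb R^N)$, which reduces the kernel to $C|x-y|^{1-N-2\sigma}$; integrating this against $dy$ over $\{|x-y|\ge \mathrm{dist}(x,\partial\Omega)\}$ gives a bound of the form $\mathrm{dist}(x,\partial\Omega)^{1-2\sigma}$ (with a logarithm at $\sigma=1/2$), which is integrable in $x$ near $\partial\Omega$ because $\sigma<1$. The complementary far-field region and the case $\Omega$ unbounded (with compact boundary) are controlled by the Schwartz decay of $v$, so the double integral is absolutely convergent and Fubini applies.
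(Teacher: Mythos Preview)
Your proof is correct and follows essentially the same route as the paper: write $(-\Delta)^s u=(-\Delta)^\sigma(-\Delta u)$, apply Lemma~\ref{regional} to drop the $\Omega\times\Omega$ contribution, and then use Fubini on the off-diagonal $\Omega\times(\mathbb R^N\setminus\Omega)$ piece together with the antisymmetry of the integrand. Your explicit discussion of why Fubini is justified (the $|x-y|^{1-N-2\sigma}$ bound and the resulting $\mathrm{dist}(x,\partial\Omega)^{1-2\sigma}$ integrability near $\partial\Omega$) is in fact more detailed than the paper's, which simply swaps the order of integration without comment.
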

\begin{proof} For $u\in \mathcal{S}(\mathbb{R}^{N})$ we note that $(-\Delta)^s u$ is well defined in all $\mathbb R^N$ and actually there exists a positive constant $C=C(N, \sigma,\|\Delta u\|_{L^{\infty}(\mathbb{R}^N)})$, such that $|(-\Delta)^s u|\leq C$. By direct computations we obtain
\begin{equation}\label{eq:c1}
\begin{array}{lll}
&\dyle \int_\Omega (-\Delta)^s u\,dx=c_{N,\sigma}\int_\Omega P. V. \int_{\mathbb R^N}\frac{(-\Delta u(x))-(-\Delta u(y))}{|x-y|^{N+2\sigma}}\,dy\,dx\\
&\dyle =c_{N,\sigma}\int_\Omega \int_{\mathbb R^N\setminus \Omega}\frac{(-\Delta u(x))-(-\Delta u(y))}{|x-y|^{N+2\sigma}}\,dy\,dx\\
&\dyle =c_{N,\sigma}\int_{\mathbb R^N\setminus \Omega}\int_\Omega \frac{(-\Delta u(x))-(-\Delta u(y))}{|x-y|^{N+2\sigma}}\,dx\,dy\\
&\dyle=-\int_{\mathbb R^N\setminus \Omega} \mathcal N_\sigma (-\Delta u)\,dy,
\end{array}
\end{equation}
where in \eqref{eq:c1} we have use  Lemma \ref{regional} that gives
$$
\int_\Omega P. V. \int_\Omega \frac{(-\Delta u(x))-(-\Delta u(y))}{|x-y|^{N+2\sigma}}\,dy\,dx=0.
$$

\end{proof}

We now show some {\it calculation rules} that will be needed later.
\begin{lemma}\label{lem:primolemmaintperpar}Let  $u\in \mathcal{S}(\mathbb{R}^{N})$ and $\Omega\subseteq\mathbb{R}^{N}$ be a smooth domain with compact boundary.  Then, for every $0<\sigma<1$ we have
\begin{itemize}
\item [$(i)$]
\begin{equation*}
\int_{\mathbb{R}^N\setminus \Omega} (-\Delta)^{\sigma}_{\Omega}u \,dx=-\int_{\Omega} (-\Delta)^{\sigma}_{\mathbb{R}^N\setminus \Omega}u \,dx,
\end{equation*}
\item [$(ii)$]
\begin{equation*}
\int_{\Omega}(-\Delta)^{s}u=-\int_{\mathbb{R}^N\setminus \Omega}(-\Delta)^{s}u,\quad s=1+\sigma.
\end{equation*}
\end{itemize}
\end{lemma}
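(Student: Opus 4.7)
The plan is to derive both parts from ingredients already in hand: the pointwise (a.e.) decomposition
\[(-\Delta)^{\sigma}u(x) = (-\Delta)^{\sigma}_{\Omega}u(x) + (-\Delta)^{\sigma}_{\mathbb{R}^N\setminus\Omega}u(x), \qquad x\in\mathbb{R}^N,\]
already noted in the paper, together with Lemma~\ref{regional} and Proposition~\ref{pro:intbyparts1}. The observation that makes the whole scheme work is that $\mathbb{R}^N\setminus\overline{\Omega}$ is itself a smooth (possibly unbounded) domain whose boundary is $\partial\Omega$, hence compact and $C^{1,1}$; every ``intrinsic'' result at our disposal therefore applies to it without change.

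\medskip

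\textbf{Proof of $(i)$.} I would start from the fact that for $u\in\mathcal{S}(\mathbb{R}^N)$ one has $(-\Delta)^{\sigma}u\in L^1(\mathbb{R}^N)$, so Fourier inversion gives
\[\int_{\mathbb{R}^N}(-\Delta)^{\sigma}u\,dx = \bigl(|\xi|^{2\sigma}\widehat{u}(\xi)\bigr)\big|_{\xi=0}=0.\]
Splitting this vanishing integral by the decomposition above and by the partition $\mathbb{R}^N=\Omega\cup(\mathbb{R}^N\setminus\Omega)$ (up to the null set $\partial\Omega$) yields
\[0 = \int_{\Omega}(-\Delta)^{\sigma}_{\Omega}u\,dx + \int_{\Omega}(-\Delta)^{\sigma}_{\mathbb{R}^N\setminus\Omega}u\,dx + \int_{\mathbb{R}^N\setminus\Omega}(-\Delta)^{\sigma}_{\Omega}u\,dx + \int_{\mathbb{R}^N\setminus\Omega}(-\Delta)^{\sigma}_{\mathbb{R}^N\setminus\Omega}u\,dx.\]
Lemma~\ref{regional} applied to $\Omega$ makes the first integral vanish, and the same lemma applied to $\mathbb{R}^N\setminus\overline{\Omega}$ kills the fourth. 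What remains is precisely the identity $(i)$.

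\medskip

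\textbf{Proof of $(ii)$.} The plan is to use Proposition~\ref{pro:intbyparts1} twice: first with the given $\Omega$, and then with $\mathbb{R}^N\setminus\overline{\Omega}$ in its place (legitimate because the proposition covers smooth, possibly unbounded domains with compact boundary). This produces
\[\int_{\Omega}(-\Delta)^s u\,dx = -\int_{\mathbb{R}^N\setminus\Omega}(-\Delta)^{\sigma}_{\Omega}(-\Delta u)\,dx, \qquad \int_{\mathbb{R}^N\setminus\Omega}(-\Delta)^s u\,dx = -\int_{\Omega}(-\Delta)^{\sigma}_{\mathbb{R}^N\setminus\Omega}(-\Delta u)\,dx.\]
Adding the two equalities and applying part $(i)$ to the Schwartz function $v:=-\Delta u$ makes the sum of the right-hand sides vanish, so the sum of the left-hand sides vanishes too; that is exactly $(ii)$.

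\medskip

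\textbf{Main obstacle.} The argument is essentially bookkeeping, but two points deserve explicit justification: (a) that $\mathbb{R}^N\setminus\overline{\Omega}$ fits the hypotheses of both Lemma~\ref{regional} and Proposition~\ref{pro:intbyparts1}, which is immediate since $\partial(\mathbb{R}^N\setminus\overline{\Omega})=\partial\Omega$ is compact and $C^{1,1}$; and (b) the $L^1(\mathbb{R}^N)$-integrability of $(-\Delta)^{\sigma}u$ for Schwartz $u$, needed in $(i)$ to deduce $\int_{\mathbb{R}^N}(-\Delta)^{\sigma}u=0$ from Fourier inversion, which is standard (one has the $|x|^{-N-2\sigma}$ tail decay of $(-\Delta)^{\sigma}u$).
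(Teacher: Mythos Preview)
Your proof is correct, but for part $(i)$ it takes a noticeably different route from the paper. The paper dispatches $(i)$ in one line by ``Fubini's theorem'': since for $x\in\mathbb{R}^N\setminus\overline{\Omega}$ and $y\in\Omega$ the kernel has no singularity, one writes
\[
\int_{\mathbb{R}^N\setminus\Omega}(-\Delta)^{\sigma}_{\Omega}u(x)\,dx
= c_{N,\sigma}\int_{\mathbb{R}^N\setminus\Omega}\int_{\Omega}\frac{u(x)-u(y)}{|x-y|^{N+2\sigma}}\,dy\,dx,
\]
swaps the order of integration and relabels variables; the antisymmetry of the integrand yields $(i)$ immediately. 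Your argument instead passes through the global identity $\int_{\mathbb{R}^N}(-\Delta)^{\sigma}u=0$ (via Fourier) together with two applications of Lemma~\ref{regional}. This is perfectly valid and has the advantage of exhibiting $(i)$ as a consequence of the vanishing of the ``diagonal'' regional terms, but it is more circuitous and relies on the $L^1$ decay of $(-\Delta)^{\sigma}u$, whereas the paper's Fubini argument only needs the off-diagonal integrability that is already implicit in the estimates of Lemma~\ref{regional}.

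For part $(ii)$ your approach is essentially the same as the paper's (which cites Proposition~\ref{pro:intbyparts1} and Lemma~\ref{regional}); you simply organize the computation by applying Proposition~\ref{pro:intbyparts1} to both $\Omega$ and its complement and then invoking $(i)$, which is a clean variant of the intended argument.
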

\begin{proof}
To prove $(i)$ it is sufficient to apply Fubini's theorem and $(ii)$ follows by using Proposition \ref{pro:intbyparts1} and  Lemma \ref{regional}.
\end{proof}
Thanks to Lemma \ref{lem:primolemmaintperpar} we have the following result that will be needed to prove the main theorem of the present work.
\begin{proposition}\label{pro:cuatrodiec}
Let $p\in \mathcal P_1(\mathbb R^N) $ and let $u\in \mathcal{S}(\mathbb{R}^{N})$ be such that
\begin{equation}\label{eq:sagaghagdvjhgsjd}
\int_{\mathbb{R}^N\setminus \Omega}|\operatorname{div}\big((-\Delta)_{ \Omega}^\sigma \nabla u(x)\big)\, p(x)|\,dx<+\infty.
\end{equation}
Then
\begin{equation*}
\int_{\Omega}p\, (-\Delta)^su \,dx=
-\int_{\mathbb{R}^N\setminus \Omega} p\, \mathcal N^1_\sigma u \,dx -\int_{\partial \Omega}  p\, \mathcal{N}^2_\sigma u    \,d S.
\end{equation*}
\end{proposition}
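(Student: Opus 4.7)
The plan is to express $(-\Delta)^{s}u$ in divergence form and perform two integrations by parts---one on $\Omega$ and one on the unbounded complement---so that the identity reduces to algebraic cancellations already encoded in Lemma \ref{regional} and Lemma \ref{lem:primolemmaintperpar}(i). By Proposition \ref{pro:equivfraclapl} with $m=1$ we have $(-\Delta)^{s}u = -\operatorname{div}\bigl((-\Delta)^{\sigma}\nabla u\bigr)$, and the decomposition $(-\Delta)^{\sigma} = (-\Delta)^{\sigma}_{\Omega} + (-\Delta)^{\sigma}_{\mathbb R^{N}\setminus\Omega}$ leads to the vector fields
\begin{equation*}
V_{1}:=(-\Delta)^{\sigma}_{\Omega}\nabla u, \qquad V_{2}:=(-\Delta)^{\sigma}_{\mathbb R^{N}\setminus\Omega}\nabla u, \qquad W:=V_{1}+V_{2}=(-\Delta)^{\sigma}\nabla u.
\end{equation*}
The field $W$ is smooth on all of $\mathbb R^{N}$ because $u\in\mathcal S(\mathbb R^{N})$; the individual $V_{i}$ may be singular on $\partial\Omega$ when $\sigma\ge 1/2$, and in that case their boundary values are read in the trace sense of \cite{Mou} already invoked in the paper.

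Applying the classical divergence theorem to the smooth field $W$ on the bounded $C^{1,1}$ domain $\Omega$ (and using that $\nabla p$ is a constant vector) yields
\begin{equation*}
\int_{\Omega} p\,(-\Delta)^{s}u\,dx = \int_{\Omega} \nabla p\cdot W\,dx - \int_{\partial\Omega} p\, W\cdot\nu\,dS.
\end{equation*}
Carrying out the analogous integration by parts on $\mathbb R^{N}\setminus\Omega$---exploiting hypothesis \eqref{eq:sagaghagdvjhgsjd} and the decay $|V_{1}(x)|\le C|x|^{-N-2\sigma}$ inherited from $\nabla u\in\mathcal S$ to discard the flux at infinity, with outward normal $-\nu$ on $\partial\Omega$---gives
\begin{equation*}
-\int_{\mathbb R^{N}\setminus\Omega} p\,\mathcal N^{1}_{\sigma}u\,dx
= \int_{\mathbb R^{N}\setminus\Omega} p\,\operatorname{div}V_{1}\,dx
= -\int_{\mathbb R^{N}\setminus\Omega} \nabla p\cdot V_{1}\,dx - \int_{\partial\Omega} p\,V_{1}\cdot\nu\,dS.
\end{equation*}
Combined with $-\int_{\partial\Omega}p\,\mathcal N^{2}_{\sigma}u\,dS = -\int_{\partial\Omega} p\,V_{2}\cdot\nu\,dS$, the three boundary contributions on $\partial\Omega$ recombine into $-\int_{\partial\Omega} p\, W\cdot\nu\,dS$, exactly matching the boundary term produced on the left-hand side.

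What remains is the interior identity
\begin{equation*}
\int_{\Omega}\nabla p\cdot V_{1}\,dx + \int_{\Omega}\nabla p\cdot V_{2}\,dx + \int_{\mathbb R^{N}\setminus\Omega}\nabla p\cdot V_{1}\,dx = 0.
\end{equation*}
Here the constancy of $\nabla p$ becomes essential: setting $h:=\nabla p\cdot\nabla u\in\mathcal S(\mathbb R^{N})$, linearity gives $\nabla p\cdot V_{j}=(-\Delta)^{\sigma}_{D_{j}}h$ for the appropriate $D_{j}\in\{\Omega,\,\mathbb R^{N}\setminus\Omega\}$. Lemma \ref{regional} then yields $\int_{\Omega}(-\Delta)^{\sigma}_{\Omega}h\,dx = 0$, killing the first summand, while Lemma \ref{lem:primolemmaintperpar}(i) provides $\int_{\Omega}(-\Delta)^{\sigma}_{\mathbb R^{N}\setminus\Omega}h\,dx = -\int_{\mathbb R^{N}\setminus\Omega}(-\Delta)^{\sigma}_{\Omega}h\,dx$, cancelling the remaining two.

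The principal technical obstacle is rigorously justifying the integration by parts on the unbounded exterior domain against the linearly growing test function $p$. Two ingredients beyond the formal manipulations are needed: (a) sufficient decay of $V_{1}$ and $\operatorname{div}V_{1}$ at infinity to make the spherical flux through $\partial B_{R}$ vanish as $R\to\infty$---controlled via $u\in\mathcal S$ together with the integrability assumption \eqref{eq:sagaghagdvjhgsjd}; and (b) a boundary interpretation of $V_{1}\cdot\nu$ on $\partial\Omega$ when $\sigma\ge 1/2$, for which the regional operator is genuinely singular up to the boundary and one must appeal to the trace theory of \cite{Mou}. Once these two points are settled, the decomposition and the two lemmas close the argument.
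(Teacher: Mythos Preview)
Your proof is correct and follows essentially the same route as the paper: write $(-\Delta)^s u=-\operatorname{div}\bigl((-\Delta)^\sigma\nabla u\bigr)$, split the regional pieces, integrate by parts on $\Omega$ and on $\mathbb R^N\setminus\Omega$, and close with Lemma~\ref{regional} and Lemma~\ref{lem:primolemmaintperpar}(i) after pulling out the constant $\nabla p$. The only cosmetic difference is that the paper performs the IBP on $\Omega$ on the two regional pieces $I_1,I_2$ separately (so the singular boundary terms $\int_{\partial\Omega}p\,(-\Delta)^\sigma_\Omega\nabla u\cdot\nu\,dS$ appear and then cancel against each other), whereas you keep the smooth full field $W$ together on $\Omega$ and let the cancellation happen at the final recombination step; the ingredients and the logic are identical.
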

\begin{proof}
If $p\in \mathcal P_1(\mathbb R^N) $ and $u\in \mathcal{S}(\mathbb{R}^{N})$ then
\begin{eqnarray}\label{eq:chenonsipuoooo}
&&\int_{\Omega}p\, (-\Delta)^{s}u\,dx= \int_{\Omega}-\operatorname{div}\big((-\Delta)^\sigma \nabla u\big)\,p\,dx\\\nonumber
&&= \int_{\Omega}-\operatorname{div}\big((-\Delta)_{\Omega}^\sigma \nabla u\big)\,p\,dx+ \int_{\Omega}-\operatorname{div}\big((-\Delta)_{\mathbb{R}^N\setminus \Omega}^\sigma \nabla u\big)\,p\,dx\\\nonumber
&&=: I_1+I_2.
\end{eqnarray}
By the divergence theorem we have that
\begin{eqnarray}\nonumber
I_1&=&\int_{\Omega}-\operatorname{div}\big((-\Delta)_{\Omega}^\sigma \nabla u\big)\,p\,dx\\\nonumber
&=& \int_{\Omega} (-\Delta)_{\Omega}^\sigma \nabla u \cdot \nabla p\, dx- \int_{\partial \Omega}p\, (-\Delta)_{\Omega}^\sigma \nabla u \cdot \nu \,dS,
\end{eqnarray}
where $\nu$ denotes the unit outer normal field to the boundary $\partial \Omega$. Since $\nabla p$ is a constant vector, then using $(i)$ of Lemma \ref{lem:primolemmaintperpar}, we obtain that
\begin{equation}\label{eq:kjsakajjjhkwdjhkwefw}
I_1=-\int_{\partial \Omega}p\, (-\Delta)_{\Omega}^\sigma \nabla u \cdot \nu \,dS.
\end{equation}
By divergence theorem
\begin{eqnarray}\nonumber
I_2&:=& \int_{\Omega}-\operatorname{div}\big((-\Delta)_{\mathbb{R}^N\setminus \Omega}^\sigma \nabla u\big)\,p\,dx\\\nonumber
&=& \int_{\Omega} (-\Delta)_{\mathbb{R}^N\setminus \Omega}^\sigma \nabla u \cdot \nabla p\, dx- \int_{\partial \Omega}p\, (-\Delta)_{\mathbb{R}^N\setminus \Omega}^\sigma \nabla u \cdot \nu \,dS.
\end{eqnarray}
Recalling that $\nabla p$ is a constant vector, using $(i)$ of Lemma \ref{lem:primolemmaintperpar}, we obtain
\begin{equation}\label{eq:sajhsajhhsinceraaa}
I_2=-\int_{\mathcal C \Omega}(-\Delta)^{\sigma}_{\Omega}\nabla u\cdot \nabla p\, dx - \int_{\partial \Omega}p\, (-\Delta)_{\mathbb{R}^N\setminus \Omega}^\sigma \nabla u \cdot \nu \,dS.
\end{equation}
Using \eqref{eq:kjsakajjjhkwdjhkwefw} and  \eqref{eq:sajhsajhhsinceraaa} together with divergence theorem,  we deduce
\begin{eqnarray}\label{eq:lalunaaaaasefsss}
&&I_1+I_2= -\int_{\partial \Omega}p\, (-\Delta)_{\Omega}^\sigma \nabla u \cdot \nu \,dS\\\nonumber
&&-\int_{\mathcal C \Omega}(-\Delta)^{\sigma}_{\Omega}\nabla u\cdot \nabla p\, dx - \int_{\partial \Omega}p\, (-\Delta)_{\mathbb{R}^N\setminus \Omega}^\sigma \nabla u \cdot \nu \,dS\\\nonumber
&&=-\int_{\partial \Omega}p\, (-\Delta)_{\Omega}^\sigma \nabla u \cdot \nu \,dS \\\nonumber
&&+\int_{\mathbb{R}^N\setminus \Omega}\operatorname{div}\big((-\Delta)_{ \Omega}^\sigma \nabla u\big)\,p\,dx-\int_{\partial \Omega}p\, (-\Delta)_{ \Omega}^\sigma \nabla u \cdot (-\nu)\, ds
-\int_{\partial \Omega}p\, (-\Delta)_{\mathbb{R}^N\setminus \Omega}^\sigma \nabla u \cdot \nu \,dS\\\nonumber
&&=-\int_{\mathbb{R}^N\setminus \Omega}-\operatorname{div}\big((-\Delta)_{ \Omega}^\sigma \nabla u\big)\,p\,dx-\int_{\partial \Omega}p\, (-\Delta)_{\mathbb{R}^N\setminus \Omega}^\sigma \nabla u \cdot \nu \,dS,
 \end{eqnarray}
where $(-\nu)$ denotes the unit inner normal field to the boundary $\partial \Omega$. We point out that in the previous computations,  the divergence theorem ({see for example \cite[Theorem 6.3.4]{Will}}) can be used using a truncation argument together with \eqref{eq:sagaghagdvjhgsjd}.

Collecting \eqref{eq:chenonsipuoooo} and \eqref{eq:lalunaaaaasefsss}, using the definitions \eqref{eq:neumann1} and \eqref{eq:neumann2}, we conclude the proof.
\end{proof}
\begin{remark}
We notice that from Proposition \ref{pro:intbyparts1} and Proposition \ref{pro:cuatrodiec} it is clear that
\begin{equation}\label{eq:chetempochefa}
\int_{\mathbb{R}^N\setminus \Omega} \mathcal N_\sigma (-\Delta u)\,dx=\int_{\mathbb{R}^N\setminus \Omega} \mathcal N^1_\sigma u\,dx +\int_{\partial \Omega} \mathcal{N}^2_\sigma u  \,d S,\, \mbox{ for every $u\in\mathcal{S}(\mathbb{R}^{N})$ }
\end{equation}
in the hypotheses of Proposition \ref{pro:cuatrodiec},
that is, \eqref{eq:chetempochefa} is the splitting of $\mathcal N_\sigma (-\Delta u)$ in the two parts that will be needed for a variational formulation of the corresponding Neumann problem.
\end{remark}
We conclude this section  obtaining a natural Neumann condition for the s-Laplacian with $s>1$.   Roughly speaking, in the higher order case, to describe an  appropriate weak formulation of our problem, we have to use   two (non local) Neumann conditions. Our candidates are given in  equations \eqref{eq:neumann1} and \eqref{eq:neumann2}. Thus, although Proposition \ref{pro:intbyparts1}  suggests the use of $\mathcal N_\sigma (-\Delta u)$ as the  Neumann condition for problem \eqref{eq:p} we rather  split it    into $\mathcal N_\sigma^1 u$ and  $\mathcal N_\sigma^2 u$ via the equation \eqref{eq:chetempochefa}. The fact that this is the right splitting follows from the following proposition.

\begin{proposition}\label{pro:intbyparts2}
Let $u \in \mathcal{S}(\mathbb{R}^{N})$ be  such that
\begin{equation}\label{condition3} \int_{\mathbb{R}^N\setminus \Omega}\left |\operatorname{div}\left((-\Delta)_{ \Omega}^\sigma \nabla u\right)\, \right|\,dx<+\infty,
\end{equation}

and set $s=1+2\sigma$, $0<\sigma<1$. Then, for $v\in \mathcal{S}(\mathbb{R}^{N})$, we have
\begin{eqnarray}\nonumber
&&\frac{c_{N,\sigma}}{2}\int_{Q(\Omega)}\frac{(\nabla u(x)-\nabla u(y))(\nabla v(x)-\nabla v(y))}{|x-y|^{N+2\sigma}}\, dx\,dy\nonumber\\
&&=\int_{\Omega}v\, (-\Delta)^su\,  dx +\int_{\mathbb{R}^N\setminus \Omega} v\, \mathcal {N}^1_\sigma u \, dx +\int_{\partial \Omega} v\, \mathcal{N}^2_\sigma u \, d S.
\end{eqnarray}
\end{proposition}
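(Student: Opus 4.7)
The plan is to deduce the identity from the scalar integration-by-parts formula of Dipierro--Ros-Oton--Valdinoci \cite{DRoV} for the $\sigma$-fractional Laplacian ($\sigma\in(0,1)$), applied componentwise to $\nabla u$ and $\nabla v$, and then to perform two classical divergence theorems: one on $\Omega$ and one on $\mathbb R^N\setminus\overline\Omega$. To begin, for each $j=1,\dots,N$ one has
\[
\frac{c_{N,\sigma}}{2}\int_{Q(\Omega)}\frac{(\partial_j u(x)-\partial_j u(y))(\partial_j v(x)-\partial_j v(y))}{|x-y|^{N+2\sigma}}\,dxdy=\int_{\Omega}\partial_j v\,(-\Delta)^{\sigma}\partial_j u\,dx+\int_{\mathbb R^N\setminus\Omega}\partial_j v\,\mathcal N_\sigma\partial_j u\,dx,
\]
valid for $u,v\in\mathcal S(\mathbb R^N)$. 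Summing over $j$ turns the left-hand side of the proposition into $I+J$ with
\[
I=\int_\Omega\nabla v\cdot(-\Delta)^\sigma\nabla u\,dx,\qquad J=\int_{\mathbb R^N\setminus\Omega}\nabla v\cdot(-\Delta)^\sigma_\Omega\nabla u\,dx,
\]
where in $J$ I have used that outside $\overline\Omega$ no principal value is needed, so $\mathcal N_\sigma\nabla u=(-\Delta)^\sigma_\Omega\nabla u$ componentwise.

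Next I would treat $I$ and $J$ by the divergence theorem. On the bounded smooth domain $\Omega$, Proposition \ref{pro:equivfraclapl} in the case $m=1$ gives $-\operatorname{div}\bigl((-\Delta)^\sigma\nabla u\bigr)=(-\Delta)^s u$ pointwise, and hence
\[
I=\int_\Omega v\,(-\Delta)^s u\,dx+\int_{\partial\Omega}v\,(-\Delta)^\sigma\nabla u\cdot\nu\,dS,
\]
where the boundary trace is interpreted in the sense of \cite[Theorem B]{Mou}. On the complement $\mathbb R^N\setminus\overline\Omega$ the outer unit normal on $\partial\Omega$ equals $-\nu$; applying the divergence theorem on a truncation $B_R\setminus\overline\Omega$ and letting $R\to\infty$, while using \eqref{condition3} together with the Schwartz decay of $v$ to kill the flux at $|x|=R$, yields
\[
J=\int_{\mathbb R^N\setminus\Omega}v\,\mathcal N^1_\sigma u\,dx-\int_{\partial\Omega}v\,(-\Delta)^\sigma_\Omega\nabla u\cdot\nu\,dS,
\]
by the definition \eqref{eq:neumann1} of $\mathcal N^1_\sigma u$.

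Finally I would combine the two boundary contributions on $\partial\Omega$. Using the trace-level additivity $(-\Delta)^\sigma\nabla u=(-\Delta)^\sigma_\Omega\nabla u+(-\Delta)^\sigma_{\mathbb R^N\setminus\Omega}\nabla u$ and the definition \eqref{eq:neumann2} of $\mathcal N^2_\sigma u$, the two $\partial\Omega$-terms collapse to
\[
\int_{\partial\Omega}v\bigl[(-\Delta)^\sigma-(-\Delta)^\sigma_\Omega\bigr]\nabla u\cdot\nu\,dS=\int_{\partial\Omega}v\,(-\Delta)^\sigma_{\mathbb R^N\setminus\Omega}\nabla u\cdot\nu\,dS=\int_{\partial\Omega}v\,\mathcal N^2_\sigma u\,dS,
\]
and assembling the three pieces produces the claimed identity. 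The main technical obstacle is the divergence theorem on the unbounded set $\mathbb R^N\setminus\overline\Omega$: although $v$ is Schwartz, $(-\Delta)^\sigma_\Omega\nabla u(x)$ only has algebraic decay of order $|x|^{-N-2\sigma}$ at infinity, so one must invoke \eqref{condition3} and the ball-truncation argument to ensure that the flux at $|x|=R$ vanishes. A secondary delicate point is giving a rigorous meaning to the trace on $\partial\Omega$ of the regional operators $(-\Delta)^\sigma_\Omega$ and $(-\Delta)^\sigma_{\mathbb R^N\setminus\Omega}$ acting on the vector field $\nabla u$, which as the excerpt indicates is handled via \cite[Theorem B]{Mou}.
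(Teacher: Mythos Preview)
Your proposal is correct and follows essentially the same route as the paper. The only cosmetic difference is that you obtain the splitting of the left-hand side into $I+J$ by invoking the scalar formula of \cite{DRoV} componentwise, whereas the paper derives that same splitting directly from the antisymmetry identity $\int_{\mathcal A}\int_{\mathcal A}\frac{(\nabla v(x)+\nabla v(y))(\nabla u(x)-\nabla u(y))}{|x-y|^{N+2\sigma}}\,dy\,dx=0$; after that, both proofs apply the divergence theorem on $\Omega$ and on $\mathbb R^N\setminus\overline\Omega$ (with the same truncation-at-infinity argument justified by \eqref{condition3}) and combine the boundary terms via the decomposition $(-\Delta)^\sigma=(-\Delta)^\sigma_\Omega+(-\Delta)^\sigma_{\mathbb R^N\setminus\Omega}$.
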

\begin{proof}
Since $u$ is regular,  a similar argument as in Lemma \ref{regional} shows that
$$\int_{\mathcal A}P.V.\int_{\mathcal A} \frac{(\nabla v(x)+\nabla v(y))(\nabla u(x))-(\nabla u(y))}{|x-y|^{N+2\sigma}}\,dy\,dx=0,$$
for  any open set $\mathcal A\subset \mathbb R^N$. Therefore 
we have
\begin{eqnarray}\label{eq:int1}\\\nonumber&&\frac{1}{2}\int_{Q(\Omega)}\frac{(\nabla u(x)-\nabla u(y))(\nabla v(x)-\nabla v(y))}{|x-y|^{N+2\sigma}}\, dx\,dy = \int_{\Omega}\nabla v(x){P. V.} \int_{\mathbb R^N}\frac{(\nabla u(x)-\nabla u(y))}{|x-y|^{N+2\sigma}}\, dy\,dx \\\nonumber
&&+ \int_{\mathbb{R}^N\setminus \Omega}\nabla v(x)  \int_{\Omega}\frac{(\nabla u(x)-\nabla u(y))}{|x-y|^{N+2\sigma}}\, dy\,dx.
\end{eqnarray}
In each term of the r.h.s of \eqref{eq:int1} we use the divergence theorem. Therefore we  get the following identity
\begin{eqnarray}\label{eq:int11}
&&{c_{N,\sigma}}\int_{\Omega}\nabla v(x) {P. V.}  \int_{\mathbb R^N}\frac{(\nabla u(x)-\nabla u(y))}{|x-y|^{N+2\sigma}}\, dy\,dx \\\nonumber
&&= \int_{\Omega} \left(-\operatorname{div} {\left((-\Delta)^\sigma \nabla u(x)\right)}\right)  v(x)\,dx +\int_{\partial \Omega}v(x) {\left((-\Delta)^\sigma \nabla u(x)\right)} \cdot \nu\, d S,
\end{eqnarray}
where $\nu$ denotes the unit outer normal field to the boundary $\partial \Omega$ and
\begin{eqnarray}\label{eq:int111}
&&{c_{N,\sigma}}\int_{\mathbb{R}^N\setminus \Omega}\nabla v(x) \int_{\Omega}\frac{(\nabla u(x)-\nabla u(y))}{|x-y|^{N+2\sigma}}\, dy\,dx \\\nonumber
&&= \int_{\mathbb{R}^N\setminus \Omega}  \left (-\operatorname{div} {\big((-\Delta)_\Omega^\sigma \nabla u\big)}\right)v(x)\,dx-\int_{\partial \Omega}v(x) {\big((-\Delta)_\Omega^\sigma \nabla u\big)}\cdot \nu\, d S.
\end{eqnarray}
{Thus, by Proposition \ref{pro:equivfraclapl}, putting together \eqref{eq:int11} and \eqref{eq:int111}, from \eqref{eq:int1}} we obtain that
\begin{eqnarray}\label{eq:int1111}\\\nonumber&&\frac{c_{N,\sigma}}{2}\int_{Q(\Omega)}\frac{(\nabla u(x)-\nabla u(y))(\nabla v(x)-\nabla v(y))}{|x-y|^{N+2\sigma}}\, dx\,dy\\\nonumber
&&=
\int_{\Omega}\left( -\operatorname{div}  {\big((-\Delta)^\sigma \nabla u\big)}\right)  v(x)\,dx +\int_{\mathbb{R}^N\setminus \Omega}  \left (-\operatorname{div}  {\big((-\Delta)_\Omega^\sigma \nabla u\big)}\right)v(x)\,dx \\\nonumber &&+\int_{\partial \Omega}v(x) {(-\Delta)_{\mathbb{R}^N\setminus \Omega}^\sigma (\nabla u)}\cdot \nu\, d S\\\nonumber
%&&=
%\int_{\Omega} {\left(-\operatorname{div}\big((-\Delta)^\sigma \nabla u\big)\right) v(x)}\,dx + \int_{\mathbb{R}^N\setminus \Omega} (-\operatorname{div}(-\Delta)^{\sigma}_{\Omega}(\nabla v))v(x)\,dx+\int_{\partial \Omega}v(x) (-\Delta)^{\sigma}_{\mathbb{R}^N\setminus \Omega}(\nabla v)\cdot \nu\,dS
%\\\nonumber
&&= \int_{\Omega}{v}(-\Delta)^su\, dx +\int_{\mathbb{R}^N\setminus \Omega} {v}\mathcal N^1_\sigma u \,dx +\int_{\partial \Omega} {v}\mathcal N^2_\sigma u  \,d S,
\end{eqnarray}
concluding the proof.
\end{proof}
\subsection{Some considerations about condition \eqref{condition3}} Let us point out here that the integrability condition \eqref{condition3} in Proposition \ref{pro:intbyparts2} is not needed when $0<\sigma<1/2$, for in this case one always {has}
$ \operatorname{div}\left((-\Delta)_{ \Omega}^\sigma (\nabla u)(x)\right)\in L^1(\mathbb R^N\setminus \Omega)$. To see this observe that for  a function $u\in \mathcal{S}(\mathbb{R}^N)$ a simple computation shows that
 \begin{equation}\label{diver}
 \operatorname{div}\left((-\Delta)_{ \Omega}^\sigma (\nabla u)(x)\right)=
 c_{N,\sigma}\int_\Omega \frac{\Delta u(x)-(N+2\sigma)\frac{(\nabla u(x)-\nabla u(y))\cdot(x-y)}{|x-y|^2}}{|x-y|^{N+2\sigma}} \, dy
 \end{equation}
 We will use the following result whose proof is implicit in the proof of Lemma \ref{regional}

 \begin{lemma}
 Let $\Omega$ be a $\mathcal{C}^{1,1}$  domain such that its boundary, $\partial\Omega$, is a compact set and let $0<\alpha<1$. Then
 $$
 \int_{\mathbb R^N\setminus \Omega}\int_\Omega \frac 1{|x-y|^{N+\alpha}} dy\, dx<C_{N,\Omega}.
 $$
  \end{lemma}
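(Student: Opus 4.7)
The plan is to reduce the double integral to a one-dimensional estimate by first bounding the inner integral by a negative power of $d(x):=\operatorname{dist}(x,\partial\Omega)$, and then exploiting the $\mathcal C^{1,1}$ regularity of $\partial\Omega$ to control the contribution coming from the exterior tubular neighborhood of $\partial\Omega$.

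First I would observe that for $x\in\mathbb{R}^N\setminus\overline{\Omega}$ every $y\in\Omega$ satisfies $|x-y|\geq d(x)$, so after the substitution $z=y-x$ and the inclusion $\Omega-x\subset\{z:|z|\geq d(x)\}$,
$$\int_\Omega \frac{dy}{|x-y|^{N+\alpha}}\,\leq\,\int_{|z|\geq d(x)}\frac{dz}{|z|^{N+\alpha}}\,=\,\frac{\omega_{N-1}}{\alpha}\cdot\frac{1}{d(x)^\alpha}.$$
This is essentially the same radial cutoff that appears in the proof of Lemma~\ref{regional}. The question thus reduces to the integrability of $d(x)^{-\alpha}$ over $\mathbb{R}^N\setminus\Omega$.

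Next I would split the outer integral into a tubular neighborhood $\mathcal T_\epsilon:=\{x\in\mathbb{R}^N\setminus\overline{\Omega}:d(x)<\epsilon\}$ of $\partial\Omega$ and its complement. Since $\partial\Omega$ is compact and $\mathcal C^{1,1}$, for $\epsilon>0$ small enough the exterior normal map $(y,t)\mapsto y+t\,\nu(y)$ is a bi-Lipschitz parametrization of $\mathcal T_\epsilon$ with bounded Jacobian, and under it $d(x)=t$. Hence
$$\int_{\mathcal T_\epsilon}\frac{dx}{d(x)^\alpha}\,\leq\, C\int_{\partial\Omega}dS(y)\int_0^\epsilon \frac{dt}{t^\alpha}\,=\,\frac{C\,|\partial\Omega|\,\epsilon^{1-\alpha}}{1-\alpha}\,<\,\infty,$$
the convergence of the $t$-integral being precisely where the hypothesis $\alpha<1$ enters.

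Finally, for the far region $\{x\in\mathbb{R}^N\setminus\overline{\Omega}:d(x)\geq\epsilon\}$, two cases arise according to whether $\mathbb{R}^N\setminus\Omega$ is bounded or not. If it is bounded (for instance when $\Omega$ is unbounded with compact boundary) then the crude bound $C/d(x)^\alpha\leq C/\epsilon^\alpha$ integrated over a bounded set is trivially finite. If instead $\Omega$ itself is bounded, then for $|x|\geq 2\,\operatorname{diam}(\Omega)$ one has $|x-y|\geq |x|/2$ for every $y\in\Omega$, yielding the sharper pointwise bound $\int_\Omega|x-y|^{-(N+\alpha)}\,dy\leq C\,|\Omega|\,|x|^{-(N+\alpha)}$, which is integrable at infinity; the remaining intermediate set is bounded and again gives a finite contribution via $C/\epsilon^\alpha$. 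The main technical ingredient throughout is the uniform control of the Jacobian of the normal parametrization of $\mathcal T_\epsilon$, which is exactly what the $\mathcal C^{1,1}$ hypothesis on $\partial\Omega$ guarantees.
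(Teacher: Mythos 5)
Your proof is correct and follows essentially the same route as the paper, which only sketches this lemma by remarking that it is ``implicit in the proof of Lemma \ref{regional}'': there the inner integral is likewise reduced to the bound $C\,d(x)^{-\alpha}$ via the radial cutoff at radius $d(x)$, and the integrability of $d(x)^{-\alpha}$ near the compact boundary is taken for granted. You merely make explicit the step the paper leaves implicit, by parametrizing the exterior tubular neighborhood with the normal map and handling the far region via the decay $|x-y|^{-(N+\alpha)}\lesssim |x|^{-(N+\alpha)}$, both of which are sound under the $\mathcal C^{1,1}$ and compact-boundary hypotheses.
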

 Using this and the fact that
 $$
 |\operatorname{div}\left((-\Delta)_{ \Omega}^\sigma (\nabla u)(x)\right)|\le C \int_\Omega\frac 1{|x-y|^{N+2\sigma}},
 $$
 we deduce our statement.

 \

 However,  when $1/2\le \sigma<1$ we do not have in general that
 $ \operatorname{div}\left((-\Delta)_{ \Omega}^\sigma (\nabla u)(x)\right)\in L^1(\mathbb R^N\setminus \Omega)$ as the following counterexample shows.

 \noindent{\bf Counterexample}: Let $\Omega$ denote the unit ball centered at the origin in $\mathbb R^N$. For $R$ large, define the function $u$ in the Schwartz class as follows
 $$u(x)= \begin{cases}
  \frac 12|x|^2, \quad &\mbox{ if } |x|\le R\\[3mm]
 0, \quad &\mbox{ if } |x|\ge 2R,
 \end{cases}
 $$
 and $u\in  \mathcal C^\infty$ everywhere.
 Then, formula \eqref{diver} gives for this $u$ and $1<|x|<R$,
 \begin{equation*}
 \operatorname{div}\left((-\Delta)_{ \Omega}^\sigma (\nabla u)(x)\right)=
 c_{N,\sigma}\int_\Omega \frac{-2\sigma}{|x-y|^{N+2\sigma}} \, dy.
 \end{equation*}
 This function is clearly not integrable in $B_R(0)\setminus \Omega$ for $2\sigma \ge 1$.

 Therefore the extra hypothesis in Proposition \ref{pro:intbyparts2} is necessary to justify our computations.

 \

 It is worth pointing out also that the integrability condition \eqref{condition3} is only needed in a local sense. More precisely, if $\Omega\subset B_R(0)$ then we always have for $u\in \mathcal S(\mathbb R^N)$ that $\operatorname{div}\left((-\Delta)_{ \Omega}^\sigma (\nabla u)(x)\right)\in L^1(\mathbb R^N\setminus B_{2R}(0))$.  In fact we have the following stronger estimate
 \begin{lemma}
 Assume as before that $\Omega\subset B_R(0)$. Then for every $u\in \mathcal S(\mathbb R^N)$ and every polynomial $p\in \mathcal P_1(\mathbb R^N)$ we have
 \begin{equation*} \int_{\mathbb{R}^N\setminus B_{2R}(0)}\left |\operatorname{div}\left((-\Delta)_{ \Omega}^\sigma \nabla u(x)\right)\,p(x) \right|\,dx<+\infty.
\end{equation*}
 \end{lemma}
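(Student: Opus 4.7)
The plan is to combine the explicit pointwise formula \eqref{diver} with the elementary geometric observation that, whenever $x \in \mathbb R^N \setminus B_{2R}(0)$ and $y \in \Omega \subset B_R(0)$, the triangle inequality gives $|x|/2 \leq |x-y| \leq 3|x|/2$. Thus the integrand in \eqref{diver} is a continuous (in fact smooth) function of $y$ on $\overline\Omega$, so no principal-value subtleties arise and one is simply computing a Lebesgue integral over a bounded set.

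I would split the integrand in \eqref{diver} into its two natural pieces. The first, $\Delta u(x)/|x-y|^{N+2\sigma}$, integrates over $\Omega$ to at most $C |\Delta u(x)|/|x|^{N+2\sigma}$; since $u \in \mathcal S(\mathbb R^N)$, the factor $|\Delta u(x)|$ already decays faster than any polynomial, so multiplying by $|p(x)|\leq C(1+|x|)$ still yields a rapidly decaying function on $\mathbb R^N \setminus B_{2R}(0)$. The second piece, $(N+2\sigma)(\nabla u(x)-\nabla u(y))\cdot(x-y)/|x-y|^{N+2\sigma+2}$, is dominated in absolute value by $C(|\nabla u(x)|+\|\nabla u\|_{L^\infty(\Omega)})/|x-y|^{N+2\sigma+1}$; here the extra factor $1/|x-y|$ coming from $(x-y)/|x-y|^2$ is the crucial gain. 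Integrating over $\Omega$ gives the bound $C(|\nabla u(x)|+\|\nabla u\|_{L^\infty(\Omega)})/|x|^{N+2\sigma+1}$.

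Combining these estimates and multiplying by $|p(x)|\leq C(1+|x|)$, the integrand is pointwise bounded on $\mathbb R^N\setminus B_{2R}(0)$ by
$$C\,\frac{(1+|x|)\,|\Delta u(x)|}{|x|^{N+2\sigma}} \,+\, C\,\frac{(1+|x|)\,|\nabla u(x)|}{|x|^{N+2\sigma+1}} \,+\, C\,\frac{1+|x|}{|x|^{N+2\sigma+1}}.$$
The first two terms are integrable at infinity because $u\in \mathcal S(\mathbb R^N)$ forces $|\Delta u|$ and $|\nabla u|$ to decay faster than any polynomial. The third term behaves at infinity like $1/|x|^{N+2\sigma}$, hence it is integrable on $\mathbb R^N\setminus B_{2R}(0)$ precisely because $\sigma>0$.

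The only substantive point is recognizing the extra $1/|x-y|$ in the second summand of \eqref{diver}, which upgrades the naive bound $1/|x|^{N+2\sigma}$ (not integrable against a linear polynomial at infinity) to $1/|x|^{N+2\sigma+1}$ (integrable against such a polynomial). Once this is spotted, the argument is purely a matter of keeping $|x-y|\sim |x|$ under control; no symmetry cancellations or second-order Taylor expansions are needed, in sharp contrast to the delicate inner estimates that produce the counterexample on $B_R(0)\setminus \Omega$ for $\sigma\geq 1/2$.
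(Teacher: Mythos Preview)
Your proof is correct and follows essentially the same route as the paper: both start from formula \eqref{diver}, use $|x-y|\sim|x|$ for $|x|>2R$ and $y\in\Omega$, and exploit the extra factor $1/|x-y|$ in the second summand so that the whole expression times $p(x)$ is dominated by $C/|x|^{N+2\sigma}$ at infinity. Your write-up is a bit more granular (separating the Schwartz decay of $\Delta u$ and $\nabla u$ from the $\|\nabla u\|_{L^\infty(\Omega)}$ contribution), whereas the paper bounds the numerator times $p(x)$ by a constant in one stroke, but the argument is the same.
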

 \begin{proof}
 To see this, we use the expression  given by \eqref{diver}. Since $|\Delta u(x) p(x)|<C$, $|x-y|\sim |x|$ for $|y|<R$
 and $|x|>2R$, and $\frac{|(\nabla u(x)-\nabla u(y))\cdot(x-y)\,p(x)|}{|x-y|^2}\le C
 \frac{|p(x)|}{|x|}\le C'$, for $|x|$ large, we have
 $$
 |\operatorname{div}\left((-\Delta)_{ \Omega}^\sigma \nabla u(x)\right)\,p(x)| \le C^{''} \frac 1{|x|^{N+2\sigma}}
 , \qquad |x|>2R.
 $$
 This finishes the proof.
 \end{proof}

 With all the above, we conclude that condition \eqref{eq:sagaghagdvjhgsjd} in Proposition \ref{pro:cuatrodiec} is always granted when $0<\sigma<1/2$ and is equivalent to condition \eqref{condition3} when $1/2\le \sigma <1$.
\section{The functional setting of the problem}\label{se:weakvaria}
We recall that a function $u$ is weakly differentiable in $ \mathbb R^N$ if there exists a vector field \\ $\vv U:\mathbb R^N \to \mathbb R^N$ such that
\begin{itemize}
\item $u, |\vv U| \in L^1_{loc}(\mathbb R^N)$ and
\item for every smooth vector field $\vv F$ of compact support  we have
$$
\int u(x)\, \mbox{div}\vv F(x) \, dx = -\int \vv U(x)\cdot \vv F(x) \, dx.
$$
\end{itemize}
We write $\vv U= \nabla u$. If $\vv U=(U^1,U^2,\dots,U^N)$, then the n'th component $U^j$ is denoted by $\partial_j u$ and satisfies
$$
\int \partial_j u\; \var \, dx=-\int u \;\partial_j \var\, dx, \quad \forall \var \in {\mathcal C}^\infty_0.
$$

We now define the appropriate  functional space  to solve the Neumann problem.
\begin{definition}
{Given $g_1$  as in the assumptions  $\mathcal A_{(f,g_1,g_2)}$, we define the space}
\begin{equation}\nonumber
H^s_{\mathcal N(g_1,g_2)}(\Omega)=\Big\{u:\mathbb R^N\rightarrow {\mathbb R}\,:\, u
{\mbox{ weakly differentiable }}\quad \text{and}\quad  \|u\|_{H^s_{\mathcal N(g_1,g_2)}{(\Omega)}}< +\infty\Big\},
\end{equation}
where
\begin{equation}\label{eq:canzperunamic}
\|u\|_{H^s_{\mathcal N(g_1,g_2)}(\Omega)}=\sqrt{\int_\Omega u^2\,dx+ \iint_{Q(\Omega)}\frac{|\nabla u(x)-\nabla u(y)|^2}{|x-y|^{N+2\sigma}}\, dx dy+ \int_{\mathcal C \Omega}|g_1|u^2\, dx}.
\end{equation}
\end{definition}
Notice that we have the formal function space identity
\begin{equation}\label{space1}
H^s_{\mathcal N(g_1,g_2)}(\Omega)=\dot{H}^s(\Omega)\bigcap L^2(\mathbb R^N,d\mu_{g_1}),
\end{equation}
with $\displaystyle \dot{H}^s(\Omega)$ and $d\mu_{g_1}$ defined in \eqref{spacebase} and \eqref{dmu} respectively.

\

\begin{remark}
Even {though} the space $H^s_{\mathcal N(g_1,g_2)}(\Omega)$ does not depend on the boundary data $g_2$, we prefer to include $g_2$ as a subscript in  the notation  {in order to keep in mind} both   Neumann conditions in problem \eqref{eq:p}.
\end{remark}

\

Let us prove the following

\begin{proposition}\label{Hilbert}
The space $H^s_{\mathcal N(g_1,g_2)}(\Omega)$ is a Hilbert space, with the inner product given by
$$
(u,v)_{H^s_{\mathcal N(g_1,g_2)}(\Omega)}=\int_{\mathbb R^N} uv\,d\mu_{g_1} +\iint_{Q(\Omega)}\frac{(\nabla u(x)-\nabla u(y))\cdot(\nabla v(x)-\nabla v(y))}{|x-y|^{N+2\sigma}}\, dx dy
.
$$
\end{proposition}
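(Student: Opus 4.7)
The plan is to verify, in turn, bilinearity and symmetry together with the norm identity, positive definiteness, and completeness.

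Bilinearity and symmetry of $(\cdot,\cdot)_{H^s_{\mathcal N(g_1,g_2)}(\Omega)}$ are immediate from the definition, and taking $v=u$ recovers exactly the three summands under the square root in \eqref{eq:canzperunamic}, so the induced norm agrees with the one already in use. For positive definiteness, suppose $\|u\|_{H^s_{\mathcal N(g_1,g_2)}(\Omega)}=0$. Then $u\equiv 0$ a.e.\ on $\Omega$ from the $\int_\Omega u^2$ contribution, while the Gagliardo-type double integral $D(u)^2$ vanishes. Since $\Omega\times\mathbb R^N\subset Q(\Omega)$, the vanishing of $D(u)$ forces $\nabla u(x)=\nabla u(y)$ for a.e.\ $(x,y)\in\Omega\times\mathbb R^N$; fixing any Lebesgue point $x_0\in\Omega$ yields that $\nabla u$ agrees a.e.\ on $\mathbb R^N$ with a constant vector. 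Hence $u$ is (a.e.) affine on $\mathbb R^N$ and vanishes on the open set $\Omega$, so $u\equiv 0$.

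For completeness I would use the linear isometry
\[
T: u\longmapsto \bigl(u,\,(x,y)\mapsto \nabla u(x)-\nabla u(y)\bigr)
\]
from $H^s_{\mathcal N(g_1,g_2)}(\Omega)$ into the Hilbert space $\mathcal H:=L^2(\mathbb R^N,d\mu_{g_1})\times L^2\bigl(Q(\Omega),|x-y|^{-N-2\sigma}dx\,dy\bigr)$, and show that its range is closed. Given a Cauchy sequence $\{u_n\}$, extracting componentwise limits in $\mathcal H$ produces $v\in L^2(\Omega)$ with $u_n\to v$ in $L^2(\Omega)$, some $w$ defined a.e.\ on $\{g_1\neq 0\}$ with $\sqrt{|g_1|}\,u_n\to \sqrt{|g_1|}\,w$ in $L^2(\mathcal C\Omega)$, and $G\in L^2\bigl(Q(\Omega),|x-y|^{-N-2\sigma}dx\,dy\bigr)$. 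Since the restriction of $D(u_n-u_m)$ to $\Omega\times\Omega$ is the squared $H^\sigma$ Gagliardo seminorm of $\nabla(u_n-u_m)$ on $\Omega$, a Poincar\'e--Wirtinger argument combined with the $L^2(\Omega)$ Cauchy property (which pins down the affine indeterminacy in the gradient) upgrades the convergence on $\Omega$ to $u_n\to v$ in $H^{1+\sigma}(\Omega)$; in particular $\nabla u_n\to\nabla v$ in $L^2(\Omega)^N$ and, along a subsequence, a.e.\ on $\Omega$.

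Exploiting next the cross-term $\int_\Omega\int_{\mathcal C\Omega}|\nabla u_n(x)-\nabla u_n(y)|^2|x-y|^{-N-2\sigma}dx\,dy$ of $D$ together with this a.e.\ control on $\Omega$, I would deduce that $\{\nabla u_n|_{\mathcal C\Omega}\}$ is Cauchy in $L^2_{\mathrm{loc}}(\mathcal C\Omega)$ for the strictly positive weight $y\mapsto\int_A|x-y|^{-N-2\sigma}dx$ (any fixed $A\Subset\Omega$ of positive measure), hence converges a.e.\ on $\mathcal C\Omega$ to a field $\vec U$ with $\vec U(x)-\vec U(y)=G(x,y)$ a.e. The candidate limit $u$ is then defined to be $v$ on $\Omega$ and, on each connected component of $\mathcal C\Omega$, the path integral of $\vec U$ from a reference point; the limit field $\vec U$ is curl-free as an a.e.\ limit of gradients, and the additive constants on components meeting $\{g_1\neq 0\}$ are fixed by demanding $u=w$ a.e.\ there. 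One checks that the resulting $u$ is weakly differentiable on $\mathbb R^N$, lies in $H^s_{\mathcal N(g_1,g_2)}(\Omega)$, and satisfies $T(u_n)\to T(u)$ in $\mathcal H$, so $u_n\to u$ in the norm. The main obstacle is precisely this last gluing step: producing a single globally weakly differentiable function on $\mathbb R^N$ from the separate limits on $\Omega$ and $\mathcal C\Omega$, with consistent additive constants across the components of $\mathcal C\Omega$ and correct matching of traces across $\partial\Omega$; the fact that each $u_n$ is already weakly differentiable on all of $\mathbb R^N$ is what ensures that this procedure closes up consistently.
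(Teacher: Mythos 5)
Your verification of the inner-product axioms and of positive definiteness is correct and coincides with the paper's. The problem is the completeness argument, and it sits exactly where you yourself flag "the main obstacle": after extracting separate limits ($v$ on $\Omega$, $w$ on $\{g_1\neq0\}$ with respect to $|g_1|dx$, and an a.e.\ limit field $\vv U$ of the gradients on $\mathcal C\Omega$ via the cross term), you propose to build the limit function on $\mathcal C\Omega$ by path-integrating $\vv U$ and then adjusting additive constants on the components of $\mathcal C\Omega$. Those constants are overdetermined: global weak differentiability fixes them through trace matching across $\partial\Omega$, while convergence in $L^2(d\mu_{g_1})$ requires $u=w$ a.e.\ on $\{g_1\neq0\}$, and nothing in your argument shows that these two requirements select the same constants. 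Moreover there is no constraint at all on components of $\mathcal C\Omega$ where $g_1\equiv0$, and $\{g_1\neq0\}$ need not be open, so "components meeting $\{g_1\neq0\}$" is not a workable device for fixing constants. The closing remark that weak differentiability of each $u_n$ "ensures that this procedure closes up consistently" is precisely the assertion that has to be proved; as written, the proof does not close.

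The paper's proof avoids any gluing. Lemma \ref{Wintinger} bounds $\int_B|\nabla u-\fint_\Omega\nabla u|^2$ for \emph{every} ball $B\supset\Omega$ by the $Q(\Omega)$-seminorm (the point being that $B\times\Omega\subset Q(\Omega)$), so the fields $\nabla u_k-\vv{b_k}$, with $\vv{b_k}=\fint_\Omega\nabla u_k$, form a Cauchy sequence in $L^2(B)$ for all balls simultaneously, with one single normalization. The vectors $\vv{b_k}$ are then shown to converge by pairing $\partial_j u_k-b_k^j$ against a bump function supported in $\Omega$ and using the $L^2(\Omega)$ convergence of $u_k$; finally Lemma \ref{all_2} (a Poincar\'e inequality on $B$ with the average taken over $\Omega$) converts gradient convergence into convergence of $u_k$ itself in $L^2(B)$ for every ball. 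This produces one function $u\in L^2_{loc}(\mathbb R^N)$ with $\nabla u=\vv U$ directly, and $u=u^*$ a.e.\ on $\{g_1\neq0\}$ follows from uniqueness of a.e.\ limits, so no curl-free argument, antiderivative construction, or matching of constants ever arises. To repair your proof, replace the path-integration and gluing step by these two Poincar\'e--Wirtinger inequalities on large balls containing $\Omega$.
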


Clearly,  $$(\cdot,\cdot)_{H^s_{\mathcal N(g_1,g_2)}}:H^s_{\mathcal N(g_1,g_2)}(\Omega)\times H^s_{\mathcal N(g_1,g_2)}(\Omega)\rightarrow \mathbb R,$$
is a bilinear form defined over the reals. Moreover, if
$\|u\|_{H^s_{\mathcal N(g_1,g_2)}(\Omega)}=\sqrt{(u,u)_{H^s_{\mathcal N(g_1,g_2)}(\Omega)}}=0,$ we have on the one hand  that $D(u)=0$ and this says that $u$ coincides a.e. with a polynomial of degree 1. Since, on the other hand,
$\int_\Omega u^2 dx=0$ we conclude that $u$ must be 0 a.e. Hence, we only need to show that $H^s_{\mathcal N(g_1,g_2)}(\Omega)$ is complete.

\

Before proving that $H^s_{\mathcal N(g_1,g_2)}(\Omega)$ is complete we will state some technical results that will be needed. We will denote by
$\displaystyle \fint_A v$ the average integral value of $v$ on $A$, that is,
$\displaystyle \fint_A v=\frac 1{|A|}\int_A v.$

\begin{lemma}\label{all_2}
There exists a constant $C=C(N,|\Omega|)$ so that, for every given a ball $B$ with $\Omega \subset B$,  and $v:\mathbb R^N\rightarrow {\mathbb R}$ weakly differentiable with $|\nabla v|\in L^2(B)$, one has
\begin{equation}\label{Poincare1}
\int_B \left(v(x)-\fint_\Omega v\right)^2dx \le C\,|B|^{1+\frac 1N}\int_B |\nabla v(x)|^2dx.\end{equation}
%As a consequence we have that $v \in H^1(B)$.
\end{lemma}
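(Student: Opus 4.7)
The plan is to introduce the ball-average $\fint_B v$ as an intermediate quantity between $v(x)$ and $\fint_\Omega v$. Writing
\[
v - \fint_\Omega v \;=\; \Big(v - \fint_B v\Big) + \Big(\fint_B v - \fint_\Omega v\Big),
\]
squaring, and integrating over $B$, the cross term $\int_B (v - \fint_B v)\,dx = 0$ vanishes, yielding the orthogonality identity
\[
\int_B \Big(v - \fint_\Omega v\Big)^{\!2} dx \;=\; \int_B \Big(v - \fint_B v\Big)^{\!2} dx \;+\; |B|\,\Big(\fint_B v - \fint_\Omega v\Big)^{\!2}.
\]
Both summands on the right will then be controlled by the classical $L^{2}$-Poincar\'e inequality on the ball $B$, namely $\int_B (v - \fint_B v)^2\,dx \le C_N R^2 \int_B |\nabla v|^2\,dx$ where $R$ is the radius of $B$, combined with $R^2 \le C_N' |B|^{2/N}$.

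The first summand is already in the required form. For the second summand, the starting identity is
\[
\fint_B v - \fint_\Omega v \;=\; \fint_\Omega \fint_B \bigl(v(x) - v(y)\bigr)\, dx\, dy,
\]
and two applications of Jensen's inequality, together with the inclusion $\Omega \subset B$ (which pays a factor $|B|/|\Omega|$ when extending the outer average from $\Omega$ to $B$), give
\[
\Big(\fint_B v - \fint_\Omega v\Big)^{\!2} \;\le\; \frac{1}{|\Omega||B|}\iint_{B\times B} (v(x)-v(y))^2\,dx\,dy.
\]
The elementary identity $\iint_{B\times B}(v(x)-v(y))^2\,dx\,dy \;=\; 2|B|\int_B (v - \fint_B v)^2\,dx$ reduces this once more to classical Poincar\'e on $B$, producing
\[
|B|\Big(\fint_B v - \fint_\Omega v\Big)^{\!2} \;\le\; \frac{C_N\,|B|^{1+2/N}}{|\Omega|}\int_B |\nabla v|^2\,dx.
\]

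Recombining the two contributions and absorbing $|\Omega|^{-1}$ into a constant $C(N,|\Omega|)$ yields an estimate of the stated form, with the expected polynomial prefactor in $|B|$. The main obstacle is purely bookkeeping the powers of $|B|$ and the dependence of the constant on $|\Omega|$; conceptually there is nothing deep, as both terms reduce to the classical Poincar\'e inequality on $B$ applied to the intermediate mean $\fint_B v$. The inequality is a quantitative expression of the fact that Poincar\'e's inequality is robust under replacing the ball-mean by an average over a proper subset $\Omega \subset B$, the cost being a multiplicative factor controlled by the ratio of volumes $|B|/|\Omega|$.
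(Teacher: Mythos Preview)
Your decomposition via the ball mean $\fint_B v$ and the orthogonality identity is clean and correct, and it does yield a Poincar\'e-type inequality. However, the exponent of $|B|$ you obtain is $1+\tfrac{2}{N}$, not the $1+\tfrac{1}{N}$ claimed in the statement. Tracking your second term: you end with
\[
|B|\Big(\fint_B v - \fint_\Omega v\Big)^{\!2} \;\le\; \frac{C_N\,|B|^{1+2/N}}{|\Omega|}\int_B |\nabla v|^2\,dx,
\]
and since $|B|$ can be arbitrarily large (only $|B|\ge|\Omega|$ is assumed), the factor $|B|^{1/N}$ cannot be absorbed into a constant depending only on $N$ and $|\Omega|$. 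So the argument proves a slightly weaker lemma. For every use of this lemma later in the paper the weaker exponent would in fact suffice, but it does not establish the inequality exactly as stated.

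The paper's proof avoids this loss by a more direct route: after the initial Jensen step
\[
\Big(v(x)-\fint_\Omega v\Big)^2 \le \fint_\Omega (v(x)-v(y))^2\,dy,
\]
it writes $v(x)-v(y)=\int_0^1 \nabla v(tx+(1-t)y)\cdot(x-y)\,dt$, applies Jensen once more, and then performs the change of variables $z=tx+(1-t)y$ together with Fubini. The geometric constraint that $y\in B$ and $\tfrac{z-y}{t}+y\in B$ forces $t\ge |z-y|/(2R)$; integrating $t^{-(N+2)}$ over this range produces a Riesz-type kernel $|z-y|^{-(N-1)}$, and the elementary bound $\int_\Omega |z-y|^{-(N-1)}\,dy\le C(N)|\Omega|^{1/N}$ gives the sharper prefactor $R^{N+1}\sim |B|^{1+1/N}$. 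In short, your reduction to black-box Poincar\'e on $B$ costs one extra power $|B|^{1/N}$ compared to the line-integral argument, which exploits that the \emph{inner} average is only over the small fixed set $\Omega$.
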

\begin{corollary}\label{cor1} With the same hipotheses and notation of Lemma \ref{all_2}, we have
\begin{equation}\label{extension2}
\frac 12\fint_B |v(x)|^2dx \le C\,|B|^{1+\frac 1N}\fint_B |\nabla v(x)|^2dx+
\left(\left |\fint_\Omega v(y)\,dy\right|\right)^2.
\end{equation}
\end{corollary}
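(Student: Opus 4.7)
The plan is to reduce the corollary to Lemma~\ref{all_2} by a simple splitting of $v$ around its average on $\Omega$ combined with the elementary Young-type inequality
\[
\tfrac12 a^2 \le (a-b)^2 + b^2,\qquad a,b\in\mathbb R,
\]
which follows at once from expanding $(a-b)^2 = a^2 - 2ab + b^2$ and applying $2ab \le \tfrac12 a^2 + 2b^2$.

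First I would apply this inequality pointwise with $a = v(x)$ and $b = \fint_\Omega v(y)\,dy$, obtaining
\[
\tfrac12 |v(x)|^2 \le \left(v(x)-\fint_\Omega v(y)\,dy\right)^{\!2} + \left(\fint_\Omega v(y)\,dy\right)^{\!2}
\]
for a.e.\ $x\in B$ (the right-hand side is well defined because $v\in L^1_{\rm loc}(\mathbb R^N)$ and $\Omega$ has finite measure; the first term is finite a.e.\ by Lemma~\ref{all_2}).

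Then I would integrate over $B$ and divide by $|B|$:
\[
\tfrac12 \fint_B |v(x)|^2\,dx \le \fint_B \left(v(x)-\fint_\Omega v(y)\,dy\right)^{\!2}dx + \left(\fint_\Omega v(y)\,dy\right)^{\!2}.
\]
Finally, dividing the conclusion of Lemma~\ref{all_2} by $|B|$ gives
\[
\fint_B \left(v(x)-\fint_\Omega v(y)\,dy\right)^{\!2}dx \le C\,|B|^{1+\frac1N}\fint_B |\nabla v(x)|^2\,dx,
\]
and plugging this into the previous line yields exactly the desired estimate \eqref{extension2}.

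There is essentially no obstacle here: the entire argument is a one-line algebraic manipulation followed by a direct application of the preceding lemma, so no new analytic input (approximation, density, or extension) beyond Lemma~\ref{all_2} is required.
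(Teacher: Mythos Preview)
Your proof is correct and follows essentially the same approach as the paper, which simply invokes the numerical inequality $(b-a)^2\ge \tfrac12 b^2 - a^2$ (equivalent to your $\tfrac12 a^2 \le (a-b)^2 + b^2$) and then applies Lemma~\ref{all_2}.
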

\begin{proof} Use simply the numerical inequality $(b-a)^2\ge \frac 12 b^2-a^2$.
\end{proof}

\

 \begin{proof}[Proof of Lemma \ref{all_2}]
The proof of  (\ref{Poincare1}) is  standard. First we observe that, from Jensen's inequality, we have
\begin{equation*}
\left(v(x)-\fint_\Omega v\right)^2=\left(\fint_\Omega (v(x)-v(y))dy\right)^2
\le \fint_\Omega \left(v(x)-v(y)\right)^2dy.
\end{equation*}
Integrating both sides with respect to $dx$ on $B$, and using the identity
\begin{equation*}
v(x)-v(y) =  \int_0^1 \nabla v(tx+(1-t)y)\cdot (x-y)dt, \quad \mbox{a.e.} \quad x,y \in \mathbb R^N,
\end{equation*}
and Jensen's again, we have
\begin{equation*}
\int_B \left(v(x)-\fint_\Omega v\right)^2dx \le \int_B \fint_\Omega\int_0^1\left |\nabla v(tx+(1-t)y)\cdot (x-y)\right |^2 dt\, dy\, dx.
\end{equation*}
By Fubini and the change of variables $x\to z=tx+(1-t)y$, we obtain that
\begin{eqnarray*}
J&:=& \int_B \fint_\Omega\int_0^1\left |\nabla v(tx+(1-t)y)\cdot (x-y)\right |^2 dt\, dy\, dx\\[2mm]
&\le & \fint_\Omega  \int_0^1 \int_{B} \left|\nabla v (z)\right|^2\left( \frac{|z-y|}{t}\right)^2 \chi_B\left( \frac{z-y}{t}+y\right)dz\frac{dt}{t^N} dy,
\end{eqnarray*}
where we have used that $\Omega\subseteq B$. We observe now that if the ball $B$ has radius $R$ and both, ${y}$ and $\frac{z-y}{t}+y $ are in $B$, then
$\left|\frac{z-y}{t}\right|< 2R$, which forces $t$ to be bigger than $\frac{|z-y|}{2R}$. Thus,
\begin{eqnarray*}
J&\le& \int_{B} \left| \nabla v(z)\right|^2 \fint_\Omega  \int_{1\wedge \frac{|z-y|}{2R}}^1 |z-y|^2\frac{dt}{t^{N+2}} dy\, dz \\[2mm]
&\le&\frac {(2R)^{N+1}}{N+1} \int_{B} \left|\nabla v(z)\right|^2 \fint_\Omega \frac{1}{|z-y|^{N-1} }dy\, dz,
\end{eqnarray*}
Finally, using that $\displaystyle \int_\Omega \frac{1}{|z-y|^{N-1} }dy \le
C(N)|\Omega|^{1/N}$, we conclude the lemma.
\end{proof}

\

 Now we  prove the following

\begin{lemma}\label{Wintinger} If $u\in \dot{H}^s(\Omega)$ then $|\nabla u|\in L^2(B)$ for every ball $B$. Moreover, if $ \Omega\subset B$ one has the estimate  \begin{equation}\label{Wintinger2}
\int_B\left|\nabla u(x)-\fint_\Omega \nabla u\right|^2 dx\le {C(N,\sigma)}\,
|B|^{1+\frac {2\sigma}N} \int_B\fint_\Omega\frac{|\nabla u(x)-\nabla u(y)|^2}{|x-y|^{N+2\sigma}}dxdy \end{equation}
\end{lemma}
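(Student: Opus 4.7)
The plan is to reduce the fractional Poincaré-type estimate to Jensen's inequality combined with the elementary bound $|x-y| \le 2R$ available when $x \in B$ and $y \in \Omega \subset B$.

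First, I would check that the right-hand side of the asserted inequality is finite for $u \in \dot H^s(\Omega)$. The key observation is that $B \times \Omega \subset Q(\Omega)$: indeed, if $y \in \Omega$ then $(x,y) \notin (\mathbb{R}^N \setminus \Omega)^2$. Thus the double integral on the right-hand side of \eqref{Wintinger2} is dominated by $D(u)^2/|\Omega|$ and is finite.

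Second, since $u$ is weakly differentiable, $\nabla u \in L^1_{\mathrm{loc}}(\mathbb{R}^N)$, so the constant vector $\fint_\Omega \nabla u$ is well defined. Applying Jensen's inequality componentwise,
\begin{equation*}
\left|\nabla u(x) - \fint_\Omega \nabla u(y)\,dy\right|^2 = \left|\fint_\Omega(\nabla u(x)-\nabla u(y))\,dy\right|^2 \le \fint_\Omega |\nabla u(x)-\nabla u(y)|^2\,dy.
\end{equation*}

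Third, and this is the only place where the fractional quotient appears, I would use that for $x \in B$ and $y \in \Omega \subset B$ one has $|x-y| \le 2R$, where $R$ is the radius of $B$. Therefore
\begin{equation*}
|\nabla u(x)-\nabla u(y)|^2 \le (2R)^{N+2\sigma}\,\frac{|\nabla u(x)-\nabla u(y)|^2}{|x-y|^{N+2\sigma}}.
\end{equation*}
Integrating in $x \in B$ and averaging in $y \in \Omega$, and writing $(2R)^{N+2\sigma} = C(N,\sigma)\,|B|^{1+2\sigma/N}$ via $|B|=\omega_N R^N$, yields exactly \eqref{Wintinger2}.

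Finally, the $L^2(B)$ claim follows immediately from \eqref{Wintinger2} and the triangle inequality: for any ball $B_0$, pick a ball $B \supset \Omega \cup B_0$; then $\nabla u - \fint_\Omega \nabla u \in L^2(B)$ by the estimate just proved, while $\fint_\Omega \nabla u$ is a fixed vector, so $\nabla u \in L^2(B) \subset L^2(B_0)$. There is no substantive obstacle; the only delicate point is making the inclusion $B\times\Omega \subset Q(\Omega)$ explicit so that the finiteness of the right-hand side is justified from the definition of $\dot H^s(\Omega)$.
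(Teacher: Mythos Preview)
Your proof is correct and follows essentially the same approach as the paper: Jensen's inequality applied to $\nabla u(x)-\fint_\Omega\nabla u$, followed by the trivial bound $|x-y|\le 2R$ to insert the fractional kernel, and the identification $(2R)^{N+2\sigma}=C(N,\sigma)|B|^{1+2\sigma/N}$. Your write-up is in fact more complete, since you make explicit the finiteness of the right-hand side via $B\times\Omega\subset Q(\Omega)$ and you spell out the enlargement argument for the $L^2(B)$ claim on an arbitrary ball, both of which the paper leaves implicit.
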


\

As an easy consequence we obtain the following inequality
\begin{corollary}\label{Wintinger0} There exists a positive constant $C=C(N)>0$ such that for  $u\in \dot{H}^s(\Omega)$ and every ball $B\supset \Omega$,
$$
\frac 12\int_B|\nabla u(x)|^2 dx\leq \frac{{C(N,\sigma)}}{|\Omega|}\,
|B|^{1+\frac {2\sigma}N}\, D^2(u)
% \iint_{Q(\Omega)}\frac{|\nabla u(x)-\nabla u(y)|^2}{|x-y|^{N+2\sigma}}dxdy
+ \left(\fint_\Omega \nabla u(y) dy\right)^2,
$$
where $D(u)$ was given in \eqref{spacebase}. In particular, if $u\in \dot{H}^s(\Omega)$ then $u \in H^1_{loc}(\mathbb R^N)$
\end{corollary}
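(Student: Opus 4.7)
The plan is to derive this corollary directly from the $\sigma$-Poincaré-type inequality in Lemma \ref{Wintinger} by (i) comparing the ``truncated'' double integral over $B\times\Omega$ with the full $Q(\Omega)$-integral that defines $D(u)$, and then (ii) applying the elementary numerical inequality $|b-a|^{2}\ge \tfrac12|b|^{2}-|a|^{2}$ exactly as was done in Corollary \ref{cor1}.

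First, given any ball $B\supset\Omega$, I would invoke Lemma \ref{Wintinger} to obtain
\[
\int_{B}\Bigl|\nabla u(x)-\fint_{\Omega}\nabla u\Bigr|^{2}dx
\;\le\; C(N,\sigma)\,|B|^{1+\frac{2\sigma}{N}}\int_{B}\fint_{\Omega}\frac{|\nabla u(x)-\nabla u(y)|^{2}}{|x-y|^{N+2\sigma}}\,dx\,dy.
\]
The key observation is that $B\times\Omega\subset Q(\Omega)$: recalling that $Q(\Omega)=\mathbb{R}^{2N}\setminus(\mathbb{R}^{N}\setminus\Omega)^{2}$, any pair $(x,y)$ with $y\in\Omega$ automatically belongs to $Q(\Omega)$. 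Therefore
\[
\int_{B}\fint_{\Omega}\frac{|\nabla u(x)-\nabla u(y)|^{2}}{|x-y|^{N+2\sigma}}\,dx\,dy
\;\le\;\frac{1}{|\Omega|}\iint_{Q(\Omega)}\frac{|\nabla u(x)-\nabla u(y)|^{2}}{|x-y|^{N+2\sigma}}\,dx\,dy
\;=\;\frac{D^{2}(u)}{|\Omega|}.
\]
Combining these two estimates yields the bound
\[
\int_{B}\Bigl|\nabla u(x)-\fint_{\Omega}\nabla u\Bigr|^{2}dx \;\le\;\frac{C(N,\sigma)}{|\Omega|}\,|B|^{1+\frac{2\sigma}{N}}\,D^{2}(u).
\]

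Next, to pass from the centered quantity $\nabla u(x)-\fint_{\Omega}\nabla u$ to $\nabla u(x)$ itself, I would apply the pointwise vector inequality $|b-a|^{2}\ge\frac12|b|^{2}-|a|^{2}$ with $b=\nabla u(x)$ and $a=\fint_{\Omega}\nabla u(y)\,dy$ (a fixed constant vector), then integrate over $B$. This produces the stated inequality, with the average $\left(\fint_{\Omega}\nabla u\right)^{2}$ appearing as the additive correction term, precisely as in Corollary \ref{cor1}.

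Finally, for the local $H^{1}$ conclusion: any compact set is contained in some ball $B\supset\Omega$, and the right-hand side of the displayed inequality is finite because $D(u)<\infty$ by hypothesis and $\fint_{\Omega}\nabla u$ is well-defined and finite thanks to Lemma \ref{Wintinger} (which gives $\nabla u\in L^{2}(\Omega)\subset L^{1}(\Omega)$). Hence $\nabla u\in L^{2}(B)$ for every ball $B$, i.e.\ $u\in H^{1}_{\mathrm{loc}}(\mathbb{R}^{N})$. I do not anticipate any genuine obstacle here — the substantive analytic work lives in Lemma \ref{Wintinger}; the only point that requires a moment of care is verifying the inclusion $B\times\Omega\subset Q(\Omega)$ so that the truncated double integral can be dominated by $D^{2}(u)/|\Omega|$.
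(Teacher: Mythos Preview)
Your proposal is correct and follows exactly the approach the paper intends: the paper gives no separate proof of this corollary, simply calling it ``an easy consequence'' of Lemma \ref{Wintinger}, and the one-line argument you supply---bounding the $B\times\Omega$ integral by $D^2(u)/|\Omega|$ via the inclusion $B\times\Omega\subset Q(\Omega)$, then applying the same numerical inequality $|b-a|^2\ge\tfrac12|b|^2-|a|^2$ used in Corollary \ref{cor1}---is precisely what is meant. (A minor remark: integrating the constant $|\fint_\Omega\nabla u|^2$ over $B$ actually yields an extra factor $|B|$ in front of the last term; the statement as printed appears to have dropped this harmless factor, but your reasoning is the correct one.)
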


\begin{proof}[Proof of Lemma \ref{Wintinger}] To simplify the notation, set
$$\displaystyle \vv {b}=(b^1,\ldots,b^N):=
\left(\fint_\Omega \partial_1 u,\dots, \fint_\Omega \partial_N u\right)=
\displaystyle \fint_\Omega  \nabla u(y) dy.$$
As in the proof of Lemma \ref{all_2}, we easily get
\begin{equation*}
\int_B\left|\nabla u(x)-\vv b\right|^2\, dx
\leq \int_{B}\fint_{\Omega}\left|\nabla u(x)-\nabla u(y)\right|^2\, dy\, dx.\nonumber
\end{equation*}
Therefore, if $B$ is a ball that contains $\Omega$ and has radius $R$, we obtain
$$\int_B\left |\nabla u(x)dx-\vv {b}\right|^2 dx \le(2R)^{N+2\sigma}
\int_B\fint_\Omega \frac{|\nabla u(x)-\nabla u(y)|^2}{|x-y|^{N+2\sigma}}dxdy, $$
as stated.
\end{proof}

\

\begin{proof}[Proof of Proposition \ref{Hilbert}]
As we pointed out above, we only need to show that $H^s_{\mathcal N(g_1,g_2)}(\Omega)$ is complete.
 To that end, consider a Cauchy sequence $\{u_k\}_k$ in our space. We proceed in several steps:

\

\textsc{Step 1:} There exists a function $u^*$ such that
\begin{equation}\label{L2mu}
\lim_{k\to\infty} \left(\int_{\Omega} |u^*-u_k|^2\, dx + \int_{\mathcal C \Omega} |u^*-u_k|^2 |g_1| \, dx
\right)=0.
\end{equation}
This comes simply from the fact that $\{u_k\}_k$ is a Cauchy sequence in
$ L^2(\mathbb{R}^N, d\mu_{g_1})$. Since, in particular,
\begin{equation*}
\lim_{k\to\infty} \int_{\Omega\bigcup \{|g_1|>1/m\}} |u^*-u_k|^2\, dx =0, \end{equation*}
for all $m\in \mathbb N$, there exists a subsequence that converges pointwise to $u^*$ in the set $\Omega\bigcup \{g_1\neq 0\}$ a.e. with respect to Lebesgue measure.

\

\textsc{ Step 2:} There exists a vector field $\vv U:\mathbb R^N \to \mathbb R^N$ such that for every ball $B\subset \mathbb R^N$ we have
\begin{equation}\label{nabla:L2}
\lim_{k\to\infty} \int_B |\nabla u_k-\vv U|^2\, dx=0.
\end{equation}
The idea here is to prove that the sequence of vector fields $\{\nabla u_k\}_k$ is a Cauchy sequence in  $[L^2(B)]^N$. By using \eqref{Wintinger2}
and putting as above
\[\vv {b_k}=(b_k^{1},\ldots,b_k^{N}):= \displaystyle \fint_\Omega  \nabla u_k(y) dy,\]
we find that the sequence of vector fields $\{\nabla u_k-\vv{b_k}\}_k$ is a Cauchy sequence in  $[L^2(B)]^N$ for every ball $B\subset \mathbb R^N$ and, hence, there exists a vector field $\vv U_0=(U_0^1,\ldots,U_0^N)$ so that
\begin{equation}\label{nabla0:L2}
\lim_{k\to\infty} \int_B |\nabla u_k(x)-\vv{b_k}-\vv U_0(x)|^2\, dx=0, \quad \forall
 B \mbox{ ball} .
\end{equation}

\

Let us prove that the sequence of vectors $\{\vv{b_k}\}_k$ has a limit. To see it, we observe that if $\varphi$ is a smooth bump function supported in $\Omega$ with $\int \varphi \, dx=1$ we have
\begin{equation}\label{vector}
\lim_{k\to\infty} \int \left(\partial_j u_k-b_k^j\right)\varphi = \int U_0^j\varphi,
  \quad  j=1,\dots, N.
\end{equation}
Since $\displaystyle \int \left(\partial_j u_k-b_k^j\right)\varphi \, dx=
-\int u_k\,\partial_j \varphi\, dx -b_k^j $ and $u_k\rightarrow u^*$ as $k\to \infty$
in $L^2(\Omega, dx)$, we have that there exists the limit
$$
b_0^j:=\lim_{k\to\infty} b_k^j=-\int\left(u^*\,\partial_j \varphi+U_0^j\varphi\right)dx.
$$
If we set $\vv b_0=(b_0^1,\ldots,b_0^N)$ then $\vv U=\vv U_0+\vv b_0$ represents the vector field seeked in \ref{nabla:L2}.

\

\textsc{Step 3:} From Corollaries \ref{cor1} and \ref{Wintinger0} we have that the family $\{u_k\}_k$ is a Cauchy sequence on $L^2(B,dx)$ for every ball $B\subset \mathbb R^N$. In particular, there exists a function $u$ defined on all $\mathbb R^N$ so that
\begin{equation}\label{L1}
u_k\longrightarrow u \quad \mbox{in } {L^2_{loc}(\mathbb R^N)}, \quad \mbox{as } k\to\infty.
\end{equation}
Since, from \textsc{Step 2}, we also have
\begin{equation}\label{L1_grad}
\nabla u_k\longrightarrow \vv U \quad \mbox{in } L^2_{loc}(\mathbb R^N), \quad \mbox{ as } k\to\infty,
\end{equation}
we conclude that $\vv U=\nabla u$.
Obviously we also have that $u=u^*$ a.e. on the set $\{x\in\mathbb{R}^{N}:g_1(x)\neq 0\}$.

\

We collect now all the information to prove that the function $u$ is indeed the limit of the sequence $\{u_k\}_k$ in the norm of $H^s_{\mathcal N(g_1,g_2)}(\Omega)$.  First, we have from (\ref{L1_grad}) and Fatou's Lemma that
$$
\lim_{k\to\infty} D^2(u-u_k)=\lim_{k\to\infty} \iint_{Q(\Omega)}\frac{\left|(\vv U(x)-\nabla u_k(x))-(\vv U(y)-\nabla u_k(y))\right|^2}{|x-y|^{N+2\sigma}}\,
dx dy =0.
$$
This, together with (\ref{L2mu}) and the above observation on $u^*$ gives
\begin{eqnarray*}
\lim_{k\to\infty} \|u-u_k\|^2_{H^s_{\mathcal N(g_1,g_2)}} &=&
 \lim_{k\to\infty} \left(\int |u-u_k|d\mu_{g_1}+D^2(u-u_k)\right) \\
 &=& \lim_{k\to\infty} \left(\int |u^*-u_k|d\mu_{g_1}+D^2(u-u_k)\right)=0.
\end{eqnarray*}
This finishes the proof of Proposition \ref{Hilbert}
\end{proof}
\section{Existence of solutions to \eqref{eq:p}. The proof of Theorem \ref{main}.} \label{sec:variational}
We start defining  the following weak formulation for the problem \eqref{eq:p}. We have
\begin{definition}\label{def:weaksolution}
Assume that $f\in L^2(\Omega)$, $g_1\in {L^1( \mathbb{R}^N\setminus\Omega, |x|^2\, dx )}{\cap L^1(\mathbb{R}^N\setminus\Omega)},$
and $g_2\in L^2(\partial\Omega)$.  Then  $u\in H^s_{\mathcal N(g_1,g_2)}(\Omega)$ is a weak solution to \eqref{eq:p} if and only if
\begin{eqnarray}\label{eq:dbahgduvasc}
&&\frac{c_{N,\sigma}}{2} \int_{Q(\Omega)}\frac{(\nabla u(x)-\nabla u(y))\cdot(\nabla v(x)-\nabla v(y))}{|x-y|^{N+2\sigma}}\, dx dy
\\\nonumber
&&=\int_{\Omega}fv\,dx+\int_{\mathbb R^N\setminus \Omega}g_1v\, dx+ \int_{\partial \Omega}g_2v\, dS,
\end{eqnarray}
for all $v \in H^s_{\mathcal N(g_1,g_2)}(\Omega)$.
\end{definition}
\begin{remark}\label{rem:wellposdness}
{We point out that if $u,v \in H^s_{\mathcal N(g_1,g_2)}(\Omega)$, each term in \eqref{eq:dbahgduvasc} is well defined. In particular since $v\in H^1(\Omega)$,  we deduce that  $v$ has a trace $Tv$ on $\partial \Omega$,  in $L^2(\partial \Omega)$. The regularity of $g_2$ can also  be sharpened according to the trace theory, that is, it is sufficient to require that $g_2\in L^{q}(\partial\Omega)$ whit $q=2(N-1)/N<2$ (see \cite{dibenedetto}).
Moreover, by Sobolev inequality,  see  \cite{DnPV}, {since ${v} \in L^p(\Omega)$, with  $p\leq2N/(N-2(\sigma+1))$ (we make the convention that $2N/(N-2(\sigma+1)=+\infty$ if  $N\leq 2(\sigma+1)$), the previous definition has sense for every} $f \in L^q(\Omega)$, with  $q \geq 2N/(N+2(\sigma+1))$.}
\end{remark}
Thanks to Definition \ref{def:weaksolution}  we can also associate a variational formulation to \eqref{eq:p}. If $f\in L^2(\Omega)$, $g_1\in {L^1( \mathbb{R}^N\setminus\Omega, |x|^2\, dx )}{\cap L^1(\mathbb{R}^N\setminus\Omega)},$
and $g_2\in L^2(\partial\Omega)$, for all $u\in H^s_{\mathcal N(g_1,g_2)}(\Omega)$ we can define the functional
\begin{equation}\label{eq:deffunctional}
J(u):= \frac{c_{N,\sigma}}{4} \int_{Q(\Omega)}\frac{|\nabla u(x)-\nabla u(y)|^2}{|x-y|^{N+2\sigma}}\, dx dy-\int_{\Omega}f{u}\,dx-\int_{\mathbb R^N\setminus \Omega}g_1{u}\, dx- \int_{\partial \Omega}g_2{u}\, dS,
\end{equation}
%}

\

If for example, we consider the homogeneous problem
\begin{equation}\label{nonresonant}
\begin{cases}
u+(-\Delta)^s  u =  f(x) &\text{in}\,\,\Omega\\
\mathcal N_\sigma^1  u  =0&\text{in}\,\,\mathbb{R}^N\setminus\overline\Omega\\
\mathcal N_\sigma^2 u=0&\text{on}\,\,\partial\Omega,
\end{cases}
\end{equation}
it is easy to see that a standard variational argumentation gives the unique energy solution.

\

Along this section we analyze a compatibility condition to take into account,  to prove the existence of weak solutions of \eqref{eq:p}, that is,  in the resonant case.
A key point is the following: let us  consider in $H^s_{\mathcal N(g_1,g_2)}(\Omega)$ the equivalence relation defined by
\[u\backsim v \quad \hbox{ if an only if  there exists } p\in \mathcal{P}_1(\mathbb{R}^N) \hbox{ such that  } \quad u=v+p.\]
{Let us denote by} $\mathcal{H}^s$ the quotient space {with respect to} this equivalence relation, {that is}
$${\mathcal{H}^{s}:={H^s_{\mathcal N(g_1,g_2)}(\Omega)}\diagup{\mathcal{P}_{1}(\mathbb{R}^{N})}=\Big\{ [u],\, u\in H^s_{\mathcal N(g_1,g_2)}(\Omega)\Big\},}$$
{where, given $u\in H^s_{\mathcal N(g_1,g_2)}(\Omega)$
$$[u]=\{v\in H^s_{\mathcal N(g_1,g_2)}(\Omega):\, v\backsim u\}:=\{u+p:\, u\in H^s_{\mathcal N(g_1,g_2)}(\Omega),\, p\in\mathcal{P}_{1}(\mathbb{R}^{N})\}\subseteq H^s_{\mathcal N(g_1,g_2)}(\Omega).$$}
It is well known that
$$\|[u]\|^2_{\mathcal{H}^{s}}=\inf_{p\in\mathcal{P}_1(\mathbb{R}^{N})}\|u-p\|^{2}_{H^s_{\mathcal N(g_1,g_2)}(\Omega)}.$$
By the Hilbert projection theorem it is clear that the previous infimum is attained, that is there exists $\widetilde{p}\in\mathcal{P}_{1}(\mathbb{R}^{N})$ such that
$$\|[u]\|^2_{\mathcal{H}^{s}}=\|u-\widetilde{p}\|^{2}_{H^s_{\mathcal N(g_1,g_2)}(\Omega)}.$$
Moreover $v:=u-\widetilde{p}\, {\in}\,  H^{s,0}_{\mathcal{N}(g_1,g_2)}(\Omega)$ where
\begin{equation}\label{noviembre_set}
H^{s,0}_{\mathcal{N}(g_1,g_2)}(\Omega):=\left\{u\in H^s_{\mathcal{N}(g_1,g_2)}(\Omega):\, \int_{\mathbb{R}^{N}}{u p\, d\mu_{g_1}}=0,\, {\forall} p\in\mathcal{P}_{1}(\mathbb{R}^{N})\right\}
\end{equation}
where $d\mu_{g_1}$ was defined in \eqref{dmu}. We notice that $H^{s,0}_{\mathcal{N}(g_1,g_2)}(\Omega)=H_{g_1}^{s,0}(\Omega)\cap L^{2}(\mathbb{R}^{N}, d\mu_{g_1})$ is a closed subspace of $H^s_{\mathcal N(g_1,g_2)}(\Omega)$.
Let us define
\[\|[u]\|^2_*=\int_{Q(\Omega)}\frac{|\nabla u(x)-\nabla u(y)|^2}{|x-y|^{N+2\sigma}}\, dx dy,\]
that is a norm in $\mathcal{H}^{s}$. In fact for $p\in\mathcal{P}_{1}(\mathbb{R}^{N})$,  we have that $\|u+p\|_*=0$ implies that $u\in \mathcal{P}_{1}(\mathbb{R}^{N})$, that is the zero function in $\mathcal{H}^{s}$.

We prove the following result that shows an example of an admissible $g_1$.

\begin{lemma}\label{equivalencia}
Let us suppose   $g_1\in L^p_{c}(\mathbb R^N\setminus \Omega),$
with $p>N/2$, then   $\lambda_1(g_1)>0$,  where $\l_1(g_1)$ is defined in \eqref{espectral}. As a consequence, the norm $\|\cdot\|_{\mathcal{H}^{s}}$ is equivalent to the norm $\|\cdot\|_*$, that is there exists a positive constant $C=C(N,\sigma, \Omega)$ such that
\begin{equation}\label{tutoria}
\frac1C\|[u]\|_*\leq\|[u]\|_{\mathcal{H}^{s}}\leq C \|[u]\|_*, \qquad \text{for all}\,\, [u]\in \mathcal{H}^{s}.
\end{equation}
\end{lemma}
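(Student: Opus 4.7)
The strategy is to reduce the equivalence \eqref{tutoria} to proving $\lambda_1(g_1)>0$, and then to establish the latter via a compactness argument based on Lemmas \ref{all_2} and \ref{Wintinger}.

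The inequality $\|[u]\|_*\le\|[u]\|_{\mathcal H^s}$ is immediate, since for any $p\in\mathcal P_1(\mathbb R^N)$ the vector $\nabla p$ is constant, so $D^2(u-p)=D^2(u)$ and hence $\|u-p\|^2_{H^s_{\mathcal N(g_1,g_2)}(\Omega)}\ge D^2(u)=\|[u]\|_*^2$. For the reverse direction, granted $\lambda_1(g_1)>0$, I would let $\tilde p\in\mathcal P_1(\mathbb R^N)$ be the orthogonal projection of $u$ onto $\mathcal P_1(\mathbb R^N)$ in $L^2(\mathbb R^N,d\mu_{g_1})$; this is well defined because the hypotheses on $g_1$ ensure $\int_{\mathbb R^N}p^2\,d\mu_{g_1}<\infty$ for every $p\in\mathcal P_1(\mathbb R^N)$. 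Then $u-\tilde p\in H^{s,0}_{g_1}(\Omega)$ and the definition of $\lambda_1(g_1)$ yields
\begin{equation*}
\|u-\tilde p\|^2_{L^2(d\mu_{g_1})}\le\lambda_1(g_1)^{-1}D^2(u-\tilde p)=\lambda_1(g_1)^{-1}D^2(u),
\end{equation*}
so that $\|[u]\|_{\mathcal H^s}^2\le\|u-\tilde p\|_{H^s_{\mathcal N(g_1,g_2)}(\Omega)}^2\le(1+\lambda_1(g_1)^{-1})\|[u]\|_*^2$.

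To prove $\lambda_1(g_1)>0$ I would argue by contradiction: suppose $\lambda_1(g_1)=0$ and pick a minimizing sequence $\{u_k\}\subset H^{s,0}_{g_1}(\Omega)$ with $\int u_k^2\,d\mu_{g_1}=1$ and $D(u_k)\to 0$. Writing $u_k=p_k+r_k$ with $p_k(x):=c_k+\vv b_k\cdot x\in\mathcal P_1(\mathbb R^N)$ chosen so that $\vv b_k=\fint_\Omega\nabla u_k$ and $\fint_\Omega r_k=0$, Lemma \ref{Wintinger} gives $\int_B|\nabla r_k|^2\,dx=\int_B|\nabla u_k-\vv b_k|^2\,dx\le C(B,\Omega)D^2(u_k)\to 0$ on any ball $B\supset\Omega$, and Lemma \ref{all_2} applied to $r_k$ (whose average over $\Omega$ vanishes) then yields $\int_B r_k^2\,dx\to 0$. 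Hence $r_k\to 0$ in $H^1_{\mathrm{loc}}(\mathbb R^N)$.

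At this point the hypothesis $p>N/2$ enters through the Sobolev embedding $H^1_{\mathrm{loc}}\hookrightarrow L^{2p'}_{\mathrm{loc}}$, where $p'=p/(p-1)$ satisfies $2p'<2N/(N-2)$ precisely when $p>N/2$ (with analogous embeddings when $N\le 2$). Combining this with the compact support of $g_1$ and Hölder's inequality gives $\int_{\mathcal C\Omega}r_k^2|g_1|\,dx\le\|g_1\|_{L^p}\|r_k\|^2_{L^{2p'}(\mathrm{supp}(g_1))}\to 0$, so $r_k\to 0$ in $L^2(\mathbb R^N,d\mu_{g_1})$. Testing the orthogonality relation $\int u_k q\,d\mu_{g_1}=0$ against each $q\in\{1,x_1,\ldots,x_N\}$ produces the system $\int p_k q\,d\mu_{g_1}=-\int r_k q\,d\mu_{g_1}\to 0$ in the unknowns $(c_k,\vv b_k)$, whose matrix is the Gram matrix of $\{1,x_1,\ldots,x_N\}$ in $L^2(d\mu_{g_1})$. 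This matrix is positive definite because $d\mu_{g_1}$ restricted to $\Omega$ coincides with Lebesgue measure, on which no nontrivial affine function vanishes; hence $c_k\to 0$ and $\vv b_k\to 0$. Consequently $p_k\to 0$ and therefore $u_k\to 0$ in $L^2(d\mu_{g_1})$, contradicting the normalization $\int u_k^2\,d\mu_{g_1}=1$. The main obstacle is precisely this final compactness step, where the Poincaré-type controls from Lemmas \ref{all_2} and \ref{Wintinger}, the Sobolev threshold $p>N/2$, and the non-degeneracy of $d\mu_{g_1}$ on $\Omega$ must be combined to pin down the polynomial part $p_k$.
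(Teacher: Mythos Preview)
Your argument is correct and takes a genuinely different route from the paper. The paper also argues by contradiction for $\lambda_1(g_1)>0$, but instead of decomposing $u_k$ explicitly into an affine part plus a remainder, it first runs a \emph{nested} contradiction to show that $\|v_k\|_{W^{s,2}(\Omega)}$ stays bounded, then invokes Rellich-type compactness to extract a limit $v$ in $L^2(\Omega)$ (and, via the same $H^1_{\mathrm{loc}}$ control and Sobolev embedding you use, also in $L^q$ for $q<2N/(N-2)$), passes to the limit in the normalization, and finally observes that $D(v)=0$ forces $v\in\mathcal P_1(\mathbb R^N)$, contradicting the orthogonality constraint. Your approach is more constructive: you isolate the affine part $p_k$ from the start, kill the remainder $r_k$ directly through Lemmas \ref{Wintinger} and \ref{all_2} together with the Sobolev threshold $p>N/2$, and then use the invertibility of the Gram matrix of $\{1,x_1,\dots,x_N\}$ in $L^2(d\mu_{g_1})$ to force $p_k\to 0$. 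This avoids the nested contradiction and the passage to a weak limit altogether, at the price of the small linear-algebra step; conversely, the paper's route makes the role of the limiting affine function more visible and requires no explicit decomposition. Both arguments rest on the same analytic ingredients (Lemmas \ref{all_2}, \ref{Wintinger}, compact support of $g_1$, and the embedding $H^1\hookrightarrow L^{2p'}$ for $p>N/2$), and your reduction of \eqref{tutoria} to $\lambda_1(g_1)>0$ via the $L^2(d\mu_{g_1})$-projection matches the paper's.
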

\begin{proof}
{The fact that} $g_1$ verifies that $g_1\in {L^1( \mathbb{R}^N\setminus\Omega, |x|^2\, dx )}{\cap L^1(\mathbb{R}^N\setminus\Omega)}$ {easily follows} by a simple application of the H\"{o}lder inequality. {Moreover since $g_1$ has compact support, by  Corollary~\ref{Wintinger0},  we deduce that $H^{s,0}_{g_1}(\Omega) = H^{s,0}_{\mathcal{N}(g_1,g_2)}(\Omega).$}
{Show that $\lambda_1(g_1)>0$ it is equivalent to obtain, for every  $v\in   H^{s,0}_{g_1}(\Omega)= H^{s,0}_{\mathcal{N}(g_1,g_2)}(\Omega)$, the following Poincar\'e-type inequality}
\begin{equation}\label{poincare}
\int_{\Omega}{v}^2\,dx+ {\int_{\mathcal C \Omega}|g_1|{v}^2\, dx}
\leq C(N,\sigma, \Omega) \iint_{Q(\Omega)}\frac{|\nabla {v}(x)-\nabla {v}(y)|^2}{|x-y|^{N+2\sigma}}\, dx dy.
\end{equation}
Observe that if \eqref{poincare} is true, then the  second inequality of \eqref{tutoria} is also valid. Indeed if we consider $[u]\in\mathcal{H}^s$ and
\begin{equation}\label{eq:wwww}
w:=u-\widetilde{p}\in H^{s,0}_{\mathcal{N}(g_1,g_2)}(\Omega),\quad  \widetilde{p}\in\mathcal{P}_{1}(\mathbb{R}^{N}),
\end{equation}
the function where the infimum in the norm is attained, by \eqref{poincare} it will follow that
\begin{eqnarray*}%\label{eq:acdcmorise}
\|[u]\|^2_{\mathcal{H}^{s}}=\|w\|^2_{H^{s}_{\mathcal{N}(g_1,g_2)}(\Omega)}&=& \int_{\Omega}w^2\,dx+\iint_{Q(\Omega)}\frac{|\nabla w(x)-\nabla w(y)|^2}{|x-y|^{N+2\sigma}}\, dx dy+ {\int_{\mathcal C \Omega}|g_1|w^2\, dx}
\\\nonumber
&\leq& C(N,\sigma, \Omega) \iint_{Q(\Omega)}\frac{|\nabla u(x)-\nabla u(y)|^2}{|x-y|^{N+2\sigma}}\, dx dy\leq C(N,\sigma, \Omega) \|[u]\|_*.
\end{eqnarray*}
as wanted.

 To show \eqref{poincare} let us suppose, by contradiction, that there exists, up to a renormalization, a sequence $\{v_k\}\subset  H^{s,0}_{\mathcal{N}(g_1,g_2)}(\Omega)$ such that
\begin{equation}\label{ppl}
\int_{\Omega}v_k^2\, dx+\int_{\mathbb{R}^N\setminus \Omega}|g_1|v_k^2\, dx=1\,\, \mbox{and}\,\, \iint_{Q(\Omega)}\frac{|\nabla v_k(x)-\nabla v_k(y)|^{2}}{|x-y|^{N+2\sigma}}\, dx dy<\frac{1}{k}.
\end{equation}
First of all, we will show that actually
\begin{equation}\label{orden_noviembre}
\int_{\Omega}v_k^2\, dx+\int_{\Omega}|\nabla v_k|^2\, dx+\int_{\Omega}\int_{\Omega}\frac{|\nabla v_k(x)-\nabla v_k(y)|^2}{|x-y|^{N+2\sigma}}\, dx dy:=\|v_k\|^2_{W^{s,2}(\Omega)}<C.\end{equation}
In fact, by contradiction, let us suppose that there exists a subsequence that we still  denote by $\{v_k\}$, such that
\begin{equation}\label{pio}
\rho_k:=\int_{\Omega}|\nabla v_k|^2\, dx\rightarrow +\infty.\
\end{equation}
Defining $z_k=v_k/\rho_k$, since $\|\nabla z_k\|_{L^2(\Omega)}=1$, from \eqref{ppl} is clear that
\begin{equation}\label{wui}
\int_{\Omega}z_k^2\, dx+\int_{\Omega}|\nabla z_k|^2\, dx+\int_{\Omega}\int_{\Omega}\frac{|\nabla z_k(x)-\nabla z_k(y)|^2}{|x-y|^{N+2\sigma}}\, dx dy< C,
\end{equation}
that is, $\|z_k\|^2_{W^{s,2}(\Omega)}\leq C$ so  $z_k \rightharpoonup z^\star$ in $W^{s,2}(\Omega)$. In particular, by Rellich's theorem we have that, up to subsequence
$z_k\overset{L^2(\Omega)}{\longrightarrow} z^\star.$
Moreover by \eqref{ppl} and \eqref{pio}-\eqref{wui} it follows that
\begin{eqnarray*}
C&\geq& \int_{\Omega}z_k^2\, dx+\int_{\Omega}\int_{\Omega}\frac{|\nabla z_k(x)-\nabla z_k(y)|^2}{|x-y|^{N+2\sigma}}\, dx dy\\
&=&\frac{1}{\rho_k^2}\left(\int_{\Omega}v_k^2\, dx+\int_{\Omega}\int_{\Omega}\frac{|\nabla v_k(x)-\nabla v_k(y)|^2}{|x-y|^{N+2\sigma}}\, dx dy\right)\to 0,\, k\to\infty.
\end{eqnarray*}
So that in particular $z^*\equiv0$. On the other hand using the fractional compact embeddings theorem \cite[Theorem 7.1]{DnPV},
we have that (up to subsequence)
$$1=\lim_{k\rightarrow +\infty} \int_{\Omega}|\nabla z_k|^2\, dx= \int_{\Omega}|\nabla z^*|^2\, dx,$$
which is a contradiction and therefore \eqref{orden_noviembre} follows. Thus, from \eqref{orden_noviembre} in particular we infer that $\{v_k\}$ is bounded in $W^{s,2}(\Omega)$, so, up to subsequence, $v_k\rightharpoonup v$ in $W^{s,2}(\Omega)$ and $v_k\to v$ in $L^{2}(\Omega)$. Moreover by Lemma~\ref{Wintinger},  Lemma \ref{all_2} and from that fact that $\|\nabla v_k\|_{L^2(\Omega)}\leq C$ we get that
\[\int_{B}v_k^2\,dx+\int_{B}|\nabla v_k|^2\,dx\leq C,\]
where $B$ is a ball centered at the origin with $\Omega \subset B$ and $C=C(N,|B|,|\Omega|)$ is a positive constant. That is, $\|v_k\|^2_{H^1(B)}\leq C$, so that $v_k\to v$ in $L^{q}$, $q<2N/(N-2)$. Hence, using the fact that $g_1\in L^p_c(\mathbb R^N\setminus\Omega),\, p>N/2$ we can pass to the limit in \eqref{ppl} getting that
\begin{equation}\label{eq:miaoiooio}
\int_{\Omega}v^2\, dx+\int_{\mathbb{R}^{N}\setminus \Omega}|g_1|v^2\, dx=1.
\end{equation}
By  the lower semicontinuity of the norm w.r.t. the weak convergence, form \eqref{ppl} is also clear that
$$\iint_{Q(\Omega)}\frac{|\nabla v(x)-\nabla v(y)|^{2}}{|x-y|^{N+2\sigma}}\, dx dy=0.$$
So that $v\in \mathcal{P}_{1}(\mathbb{R}^N)$ which, by  \eqref{eq:miaoiooio}, clearly implies a contradiction with the fact that $v \in H^{s,0}_{\mathcal{N}(g_1,g_2)}(\Omega)$.

\

{To conclude the proof of Lemma let us mention that the first inequality of \eqref{tutoria} is obviously true because $\|[u]\|^2_{\mathcal{H}^{s}}=\|w\|^2_{H^{s}_{\mathcal{N}(g_1,g_2)}(\Omega)}$ where $w$ was given in \eqref{eq:wwww}.}
\end{proof}
{Next we will emphasize that $J$ is well defined in  $\mathcal{H}^s$. In fact if $f, g_1$ and $ g_2$ satisfies de compatibily condition \eqref{eq:unodiez}  and $u \backsim v$ then
\begin{equation}\label{jota}
J(u)=J(v)=J(u-p).
\end{equation}
Therefore we can establish now the following}
\begin{theorem}\label{key_th}
Assume that $(\mathcal A_{(f,g_1,g_2)})$ holds and let $J:\mathcal{H}^s  \rightarrow \mathbb R$ be the functional defined in \eqref{eq:deffunctional}. {If $f, g_1$ and $ g_2$ satisfying the compatibility condition \eqref{eq:unodiez} then}
\begin{enumerate}
\item $J$ has a unique minimum in $\mathcal{H}^s$.
\item Every critical point of $J$ is in fact a weak solution to the problem \eqref{eq:p} modulo a polynomial in $\mathcal{P}_1(\mathbb{R}^N)$.
\end{enumerate}
\end{theorem}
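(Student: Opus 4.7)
The plan is to apply the direct method of the calculus of variations on the Hilbert space $\mathcal{H}^s$, after first verifying that $J$ descends to the quotient. If $p\in\mathcal P_1(\mathbb R^N)$ then $\nabla p$ is constant, so the Gagliardo double integral in $J$ is invariant under $u\mapsto u+p$, while the linear part shifts by exactly the left-hand side of \eqref{eq:unodiez}, which vanishes by the compatibility assumption. Hence $J([u])$ is unambiguously defined, and one may work with the canonical representative $w=u-\widetilde p\in H^{s,0}_{\mathcal N(g_1,g_2)}(\Omega)$ given by the Hilbert projection, so that $\|[u]\|_{\mathcal H^s}=\|w\|_{H^s_{\mathcal N(g_1,g_2)}(\Omega)}$ and $J(u)=J(w)$.

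For part (1), the key point is coercivity. Since $w\in H^{s,0}_{g_1}(\Omega)$, the hypothesis $\lambda_1(g_1)>0$ from $(\mathcal A_{(f,g_1,g_2)})$ yields
\[
\int_{\mathbb R^N}w^2\,d\mu_{g_1}\le \lambda_1(g_1)^{-1}\|[u]\|_*^2,
\]
which simultaneously controls $\|w\|_{L^2(\Omega)}^2$ and $\int_{\mathcal C\Omega}|g_1|w^2\,dx$. The $f$-term is handled by Cauchy--Schwarz against $\|w\|_{L^2(\Omega)}$; the $g_1$-term by the factorization $|g_1|=|g_1|^{1/2}\cdot|g_1|^{1/2}$ together with $g_1\in L^1(\mathcal C\Omega)$; and for the boundary term I would invoke Corollary \ref{Wintinger0} (to embed $w$ in $H^1(B)$ for some ball $B\supset\Omega$) combined with the standard trace inequality, obtaining $|\int_{\partial\Omega}g_2 w\,dS|\le C\|g_2\|_{L^2(\partial\Omega)}\|[u]\|_*$. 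Assembling these bounds gives
\[
J(u)\ge \frac{c_{N,\sigma}}{4}\|[u]\|_*^2-C\|[u]\|_*,
\]
hence coercivity on $\mathcal H^s$. The quadratic part of $J$ is continuous and strictly convex, so weakly lower semicontinuous, while the linear part is weakly continuous by the same estimates; the direct method therefore produces a minimizer, whose uniqueness follows from strict convexity.

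For part (2), a routine computation of the Gateaux derivative at a critical point $u$ yields, for every $v\in\mathcal H^s$, exactly the identity \eqref{eq:dbahgduvasc} tested against representatives in $H^{s,0}_{\mathcal N(g_1,g_2)}(\Omega)$. To extend this to an arbitrary $\widetilde v\in H^s_{\mathcal N(g_1,g_2)}(\Omega)$ I would decompose $\widetilde v=v+p$ with $v\in H^{s,0}_{\mathcal N(g_1,g_2)}$ and $p\in\mathcal P_1(\mathbb R^N)$: the quadratic pairing is unchanged by adding $p$ since $\nabla p$ is constant, while the linear side changes by $\int_\Omega fp+\int_{\mathcal C\Omega}g_1 p+\int_{\partial\Omega}g_2 p=0$ by \eqref{eq:unodiez}. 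Hence every critical point of $J$ solves the weak formulation of Definition \ref{def:weaksolution}, modulo a polynomial in $\mathcal P_1(\mathbb R^N)$.

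The principal difficulty is the coercivity step: it requires reconciling the information supplied by $\lambda_1(g_1)>0$ (which controls only the weighted $L^2$-norm on $H^{s,0}_{g_1}$) with the trace and extension estimates needed to absorb the boundary integral against $g_2$, via the $H^1_{\mathrm{loc}}$ embedding of Corollary \ref{Wintinger0}. Once coercivity is secured, weak lower semicontinuity, strict convexity and the passage from critical points of $J$ to weak solutions of \eqref{eq:p} are all standard.
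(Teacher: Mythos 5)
Your proposal is correct and follows essentially the same route as the paper: pass to the quotient $\mathcal H^s$ using the compatibility condition \eqref{eq:unodiez}, establish coercivity from $\lambda_1(g_1)>0$ together with the Cauchy--Schwarz, weighted $L^2$ and trace estimates (via Corollary \ref{Wintinger0}), and then invoke convexity and the direct method for (1) and the G\^ateaux derivative for (2). The only (welcome) extra detail is your explicit verification that the Euler--Lagrange identity extends from representatives in $H^{s,0}_{\mathcal N(g_1,g_2)}(\Omega)$ to arbitrary test functions $\widetilde v=v+p$, a point the paper leaves implicit.
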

\begin{proof}
First of all, it is easy to check (see also Remark  \ref{rem:wellposdness}) that the functional $J(u)$ is well defined in $\mathcal{H}^s$ that is, it is enough to prove that
\begin{equation}\label{vv}
\big |J([u])\big |< \infty.
\end{equation}
By abuse of notation, taking into account \eqref{jota} we will write $J(u)$ instead of $J({[u]})$. To obtain \eqref{vv} it is sufficient to point out that we have
\[\left|\int_{\Omega}fu\, dx\right|\leq \|f\|_{L^2(\Omega)}\|u\|_{L^2(\Omega)} \leq C\|[u]\|_{\mathcal{H}^s}.\]
Moreover by the Cauchy-Schwartz inequality,
$$\left|\int_{\mathbb{R}^N\setminus\Omega} ug_1 dx\right|\leq \int_{\mathbb{R}^N\setminus\Omega}|g_1|^{\frac 12}|g_1|^{\frac 12}|u|\,dx\leq
\left(\int_{\mathbb{R}^N\setminus\Omega}|g_1|\, dx\right)^{\frac 12}\left(\int_{\mathbb{R}^N\setminus\Omega}|g_1||u|^2\, dx\right)^{\frac 12}\leq C\|[u]\|_{\mathcal{H}^s}.$$
%Hence,
%$$\left|\int_{\mathbb{R}^N\setminus\Omega} ug_1 dx\right|\leq C\|u\|_{\mathcal{H}^s}\leq C_1+\frac 12 C\|\textcolor{blue}{[u]}\|_{\mathcal{H}^s}^2.$$
On the other hand, by using the H\"older and trace inequality and the Poincar\'e-Wintinger inequality given in Corollary \ref{Wintinger0},  we get that
\[\left|\int_{\partial\Omega}  g_2 u\, dS\right|\le \|g_2\|_{L^{\frac{2(N-1)}{N}}(\partial\Omega)}\|u\|_{L^{\frac{2^*(N-1)}{N}}(\partial\Omega)} \leq C\|g_2\|_{L^{2}(\partial\Omega)}(\|u\|_{L^2(\Omega)}+\|\nabla u\|_{L^2(\Omega)})\leq C(1+\|[u]\|_{\mathcal{H}^s}).\]
%$$|\int_{\partial\Omega}  g_2 u dS|\le (\int_{\partial\Omega}|g_2| dS)^\frac 12   (\int_{\partial\Omega}|u|^2)^\frac 12\le (\int_{\partial\Omega}|g_2| dS)\|u\|_{\mathcal{H}^s}\leq C_2 +\frac 12 C\|u\|_{\mathcal{H}^s}^2.$$
By the previous computations and the fact that $\lambda_1(g_1)>0$, {(see \eqref{tutoria}-\eqref{eq:wwww})} we also deduce that  $J$ is  coercive in $\mathcal{H}^s$, that is
\[J(u)\geq C_1\|{[u]}\|^2_{\mathcal{H}^s}-C_2\|{[u]}\|_{\mathcal{H}^s}-C_3,\]
for some positive constants $C_1, C_2, C_3$. As $J$ is continuous, convex and coercive  then, $(1)$ is an elementary consequence  of the classical minimization results.

To obtain (2) let us consider $[u],[v] \in \mathcal{H}^s$ and $t\in \mathbb R$. We   deduce that
\begin{eqnarray}\label{eq:tresonce}
&&\lim_{t\rightarrow 0}\frac{J(u+tv)-J(u)}{t}
\\\nonumber&&=
\frac{c_{N,\sigma}}{2} \int_{Q(\Omega)}\frac{(\nabla u(x)-\nabla u(y))\cdot(\nabla v(x)-\nabla v(y))}{|x-y|^{N+2\sigma}}\, dx dy-\int_{\Omega}fv \, dx
\\\nonumber
&&-\int_{\mathbb R^N\setminus \Omega}g_1v\, dx- \int_{\partial \Omega}g_2v\, dS.
\end{eqnarray}
In fact to get \eqref{eq:tresonce}, we observe that the first term on the r.h.s. of \eqref{eq:deffunctional} can be view as a bilinear form and the other terms are linear. From  \eqref{eq:tresonce}  we obtain the conclusion, that is
\begin{eqnarray}\label{eq:congrrssjajoj}
J'(u)[v]&=&
\frac{c_{N,\sigma}}{2} \int_{Q(\Omega)}\frac{(\nabla u(x)-\nabla u(y))\cdot(\nabla v(x)-\nabla v(y))}{|x-y|^{N+2\sigma}}\, dx dy-\int_{\Omega}fv \, dx
\\\nonumber
&-&\int_{\mathbb R^N\setminus \Omega}g_1v\, dx- \int_{\partial \Omega}g_2v\, dS,
\end{eqnarray}
for all $[u],[v] \in \mathcal{H}^s$ and therefore, a critical point of $J$ is in fact a weak  solution (in the sense of Definition \ref{def:weaksolution}) to~\eqref{eq:p} modulo first degree polynomials.
\end{proof}
We next show a lemma useful to obtain the proof of Theorem \ref{main} because  show that the compatibility condition is a necessary condition for the existence of a solution to~\eqref{eq:p}:
\begin{lemma}[Necessary condition]\label{pro:necesscondit}
Let us suppose that $(\mathcal A_{(f,g_1,g_2)})$ hold  and let $u$ be  a weak solution to~\eqref{eq:p}. Then {\eqref{eq:unodiez} is satisfied. That is,}
$$\int_{\Omega} f p \, dx +\int_{\mathbb R^N\setminus \Omega}g_1p \, dx+\int_{\partial \Omega}g_2 p\, dS=0, \qquad \text{for all }\,\,  p\in \mathcal P_1(\mathbb R^N).$$
\end{lemma}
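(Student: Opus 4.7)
The plan is to test the weak formulation \eqref{eq:dbahgduvasc} against an arbitrary affine function $v = p \in \mathcal P_1(\mathbb R^N)$. Since $\nabla p$ is a constant vector, we have $\nabla p(x) - \nabla p(y) = 0$ for all $x, y \in \mathbb{R}^N$, so the entire left-hand side of \eqref{eq:dbahgduvasc} vanishes identically. What remains is exactly the claimed identity
\[
0 = \int_{\Omega} f p\,dx + \int_{\mathbb{R}^N\setminus\Omega} g_1 p\,dx + \int_{\partial\Omega} g_2 p\,dS.
\]
The only thing to verify is that $p$ is a legitimate test function, i.e., $p \in H^s_{\mathcal N(g_1,g_2)}(\Omega)$, and that each integral on the right-hand side is finite.

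First I would check admissibility: looking at \eqref{eq:canzperunamic}, the three contributions to $\|p\|_{H^s_{\mathcal N(g_1,g_2)}(\Omega)}$ are (a) $\int_\Omega p^2\,dx < \infty$, which is immediate since $p$ is a polynomial and $\Omega$ is bounded; (b) the Gagliardo-type seminorm $\iint_{Q(\Omega)} \frac{|\nabla p(x)-\nabla p(y)|^2}{|x-y|^{N+2\sigma}}\,dx\,dy$, which is zero because $\nabla p$ is constant; and (c) $\int_{\mathbb{R}^N\setminus\Omega} |g_1| p^2\,dx$, which is finite because $|p(x)|^2 \le C(1+|x|^2)$ and by hypothesis $g_1 \in L^1(\mathbb{R}^N\setminus\Omega,|x|^2\,dx) \cap L^1(\mathbb{R}^N\setminus\Omega)$. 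Hence $p \in H^s_{\mathcal N(g_1,g_2)}(\Omega)$.

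Next I would verify that the three boundary/bulk integrals on the right-hand side are finite so that the conclusion is a legitimate numerical equality: $\int_\Omega f p\,dx$ is bounded by $\|f\|_{L^2(\Omega)}\|p\|_{L^2(\Omega)}$, which is finite; $\int_{\mathbb{R}^N\setminus\Omega} g_1 p\,dx$ is controlled by $C\bigl(\|g_1\|_{L^1(\mathbb{R}^N\setminus\Omega)} + \|g_1\|_{L^1(\mathbb{R}^N\setminus\Omega,|x|\,dx)}\bigr)$, which is finite by interpolation between the two integrability hypotheses on $g_1$; and $\int_{\partial\Omega} g_2 p\,dS$ is bounded by $\|g_2\|_{L^2(\partial\Omega)}\|p\|_{L^\infty(\partial\Omega)}|\partial\Omega|^{1/2}$ since $\partial\Omega$ is compact. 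Substituting $v=p$ into \eqref{eq:dbahgduvasc} then yields the compatibility condition \eqref{eq:unodiez}.

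There is no real obstacle here: the proof is essentially a one-line substitution, and the main (trivial) point is the admissibility check for $p$ as a test function, which is precisely why the space $H^s_{\mathcal N(g_1,g_2)}(\Omega)$ was defined to contain $\mathcal P_1(\mathbb R^N)$ and why the integrability assumptions on $g_1$ (the $|x|^2\,dx$-weighted $L^1$ bound in particular) were built into $(\mathcal A_{(f,g_1,g_2)})$.
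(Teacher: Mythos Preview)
Your proposal is correct and follows exactly the same approach as the paper: test the weak formulation against $p\in\mathcal P_1(\mathbb R^N)$, note that $\nabla p$ is constant so the left-hand side vanishes, and observe that $\mathcal P_1(\mathbb R^N)\subset H^s_{\mathcal N(g_1,g_2)}(\Omega)$. The paper's proof is a two-sentence version of what you wrote; your additional verification of the admissibility of $p$ and the finiteness of the three integrals is sound and simply spells out what the paper takes for granted.
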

\begin{proof} It is sufficient to observe that   $\mathcal P(x)\subset H^s_{\mathcal N(g_1,g_2)}(\Omega)$. Therefore using  $p\in \mathcal P(x)$ as a  test function in \eqref{eq:dbahgduvasc} and taking into account that $\nabla p(x)$ is a constant function we conclude.
\end{proof}
%%%%%%%%%%% Pruebas del teorema principal
%%% Hipotesis A
\begin{proof}[Proof of Theorem \ref{main}:]

By Lemma~\ref{pro:necesscondit} it is clear that if there exists  a  weak solution $u\in H^s_{\mathcal N(g_1,g_2)}(\Omega)$ to~\eqref{eq:p}, then \eqref{eq:unodiez} is obtained. On the contrary if \eqref{eq:unodiez} is true, then by Theorem \ref{key_th} there exists  $[u]\in \mathcal{H}^{s}$ a solution of \eqref{eq:p}. The solution is unique up to a polynomial $p\in \mathcal P (\mathbb R^N).$
\end{proof}
\
 The next lemma  will be useful in order to prove the right uniqueness result for weak solutions to \eqref{eq:p} and to analyze the spectral properties of the Neumann Problem (see Section 5). We notice here that this result is the equivalent of \cite[Lemma 3.8]{DRoV} for the Neumann problem associated to the fractional Laplacian operator of order $0<s<1$.
\begin{lemma}\label{eq:uniquuuueneessssss}
Let assume that $(\mathcal A_{(f,g_1,g_2)})$ hold  and let $u$ be  a weak solution to
\begin{equation}\nonumber
\begin{cases}
(-\Delta)^s  u =  f(x) &\text{in}\,\,\Omega\\
\mathcal N_\sigma^1  u  =g_1&\text{in}\,\,\mathbb{R}^N\setminus\overline\Omega\\
\mathcal N_\sigma^2 u=g_2&\text{on}\,\,\partial\Omega,
\end{cases}
\end{equation}
with $f,g_1,g_2$ non negative functions. Then
\[u\in \mathcal P_1(\mathbb R^N).\]
\end{lemma}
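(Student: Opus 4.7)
The plan is to combine the compatibility condition (which is a \emph{necessary} condition for the existence of a weak solution, by Lemma \ref{pro:necesscondit}) with the sign assumption on the data to kill all the source terms, and then to read off from the weak formulation that the fractional seminorm of $u$ vanishes.

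First, since $u$ is a weak solution under the hypotheses $(\mathcal A_{(f,g_1,g_2)})$, Lemma \ref{pro:necesscondit} applies and gives the compatibility identity
\[
\int_{\Omega} f p \, dx +\int_{\mathbb R^N\setminus \Omega}g_1 p \, dx+\int_{\partial \Omega}g_2 p\, dS=0, \qquad \forall\, p\in \mathcal P_1(\mathbb R^N).
\]
Plugging the constant polynomial $p\equiv 1\in \mathcal P_1(\mathbb R^N)$ yields
\[
\int_{\Omega} f \, dx +\int_{\mathbb R^N\setminus \Omega}g_1 \, dx+\int_{\partial \Omega}g_2\, dS=0.
\]
Since by hypothesis each of $f,g_1,g_2$ is non-negative on its domain, we conclude that $f=0$ a.e.\ in $\Omega$, $g_1=0$ a.e.\ in $\mathbb R^N\setminus\Omega$, and $g_2=0$ a.e.\ on $\partial\Omega$.

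Now we use $u$ itself as test function in the weak formulation \eqref{eq:dbahgduvasc}; this is legitimate because $u\in H^s_{\mathcal N(g_1,g_2)}(\Omega)$. All the data terms on the right-hand side vanish, and we are left with
\[
\frac{c_{N,\sigma}}{2} \iint_{Q(\Omega)}\frac{|\nabla u(x)-\nabla u(y)|^{2}}{|x-y|^{N+2\sigma}}\, dx\, dy = 0.
\]
Hence $\nabla u(x)=\nabla u(y)$ for a.e.\ $(x,y)\in Q(\Omega)$. The only mild point to check is that this forces $\nabla u$ to be a.e.\ constant on the whole of $\mathbb R^N$ (not only on $\Omega$): fix any $x_0\in\Omega$ of Lebesgue density; since $Q(\Omega)=\mathbb R^{2N}\setminus (\mathbb R^N\setminus\Omega)^2$ contains $\{x_0\}\times\mathbb R^N$, the identity $\nabla u(x_0)=\nabla u(y)$ holds for a.e.\ $y\in\mathbb R^N$, so $\nabla u$ equals a constant vector $c\in\mathbb R^N$ a.e.\ in $\mathbb R^N$. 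Therefore $u(x)=c_0+(c,x)$ a.e., i.e.\ $u\in \mathcal P_1(\mathbb R^N)$. The only potential obstacle, namely justifying that $u$ is an admissible test function and that the boundary/exterior terms make sense, is already built into the definition of weak solution and the assumptions $(\mathcal A_{(f,g_1,g_2)})$, so the argument is essentially a one-line energy identity once the compatibility condition has been exploited.
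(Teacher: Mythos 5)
Your proposal is correct and follows the paper's own argument: test with $p\equiv 1$ to kill the data via the sign assumption, then test with $v=u$ to conclude that the seminorm $D(u)$ vanishes, hence $\nabla u$ is a.e.\ constant and $u\in\mathcal P_1(\mathbb R^N)$. The extra care you take in passing from $\nabla u(x)=\nabla u(y)$ a.e.\ on $Q(\Omega)$ to constancy on all of $\mathbb R^N$ is a welcome (and valid) elaboration of a step the paper leaves implicit.
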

\begin{proof}
Taking $\mathcal P_1(\mathbb R^N)\ni p\equiv 1$ {as a test function} we get
\begin{equation}\nonumber
\int_{\Omega} f  \, dx +\int_{\mathbb R^N\setminus \Omega}g_1 \, dx+\int_{\partial \Omega}g_2 \, dS=0,
\end{equation}
and thus, since $f,g_1,g_2$ are non negative, we deduce that $f=0$ a.e. in $\Omega$, $g_1=0$ a.e. in $\mathbb R^N\setminus\Omega$ and $g_2=0$ a.e. (w.r.t the measure $S$ of the boundary) on $\partial \Omega$. Therefore considering {now} $v=u$ as a test function we get
\[\int_{Q(\Omega)}\frac{|\nabla u(x)-\nabla u(y)|^2}{|x-y|^{N+2\sigma}}\, dx dy=0,\]
that is $\nabla u (x)$ is constant in $\mathbb R^N$. Thus  $u\in \mathcal P_1(\mathbb R^N).$
\end{proof}

\

Next we will analyze the existence of the resonant problem with a different approach that, in particular, will be useful to study the spectrum of the Neumann problem $\eqref{eq:p}$ in the next section. This is the approach done in \cite{DRoV} for $0<s<1$.

We start by considering the  problem \eqref{nonresonant} with homogeneous Neumann condition, namely we set   $g_1=0$ in $\mathbb R^N\setminus \overline \Omega$ and $g_2=0$ on $\partial \Omega$. We also assume that $f \not \equiv 0$, since otherwise the result holds considering the trivial solution.

We call, to be short,  $H^s_{\mathcal N, \vec 0}(\Omega)$,  the space $H^s_{\mathcal N(g_1,g_2)}(\Omega)$ with homogeneous Neumann conditions $g_1=g_2=0$ in the problem \eqref{eq:p}.

First of all we observe that, by the Riesz theorem, given $h\in L^2(\Omega)$, since the functional
$$
v\longrightarrow \int_{\Omega}hv\,dx,\quad v\in {H^s_{\mathcal N, \vec 0}(\Omega)}
$$
is linear and continuous in $ {H^s_{\mathcal N, \vec 0}(\Omega)},$ there exists a unique function $w\in H^s_{\mathcal N, \vec 0}(\Omega)$ such that
\begin{equation}\label{eq:ausiliarioproblema}
\int_{\Omega}wv\, dx+\frac{c_{N,\sigma}}{2} \int_{Q(\Omega)}\frac{(\nabla w(x)-\nabla w(y))\cdot(\nabla v(x)-\nabla v(y))}{|x-y|^{N+2\sigma}}\, dx dy
=\int_{\Omega}hv\,dx,
\end{equation}
for all $v\in {H^s_{\mathcal N, \vec 0}(\Omega)}$, with  $\mathcal N^1_\sigma w(x)=0$ in $\mathbb R^N\setminus \overline \Omega$ and  $\mathcal N^2_\sigma w(x)=0$ on $\partial \Omega$.
Therefore we will define  the inverse operator
\begin{eqnarray}\nonumber
K:L^2(\Omega)&\longrightarrow& {H^s_{\mathcal N, \vec 0}(\Omega)}\\\nonumber
h&\longrightarrow& w,
\end{eqnarray}
with $w$ the solution to \eqref{eq:ausiliarioproblema}. We can also define the restriction  operator $\overset{\circ}{K}$ as
\begin{equation}\label{eq:comaaaavasc}
\overset{\circ}{K}h=Kh\big|_{\Omega},
\end{equation}
and readily follows that $\overset{\circ}{K}:L^2(\Omega)\longrightarrow H^s_{\mathcal N, \vec 0}(\Omega)\subseteq L^2(\Omega)$.

Notice that we can use the Fredholm alternative, given that $\overset{\circ}{K}$ is compact. Indeed, taking  $w$ as a test function in  \eqref{eq:ausiliarioproblema} we have that
%\begin{equation}\label{eq:dsjnagdelewhjgh}
$\|w\|_{H^s_{\mathcal N, \vec 0}(\Omega)}\leq C \|h\|_{L^2(\Omega)}.$
%\end{equation}
Therefore taking a sequence $\{h_n\}_{n\in \mathbb N}$ bounded in $L^2(\Omega)$, we obtain that the sequence $w_n=\overset{\circ}{K}h_n$ is bounded in ${H^s_{\mathcal N, \vec 0}(\Omega)}$ as well, that is
\begin{equation}\label{eq:ajndkkhjkjdanomad}
\|w_n\|_{H^s_{\mathcal N, \vec 0}(\Omega)}\leq C,
\end{equation}
for some constant $C$ that does not depend on $n$. In particular from \eqref{eq:canzperunamic} it follows that
$$
\int_{\Omega}w_n^2\, dx+\int_{\Omega}\int_{\Omega}\frac{|\nabla w_n(x)-\nabla w_n(y)|^2}{|x-y|^{N+2\sigma}}\, dx dy\leq C.
$$
As we did in the proof of Lemma \ref{equivalencia} the previous inequality implies that $\|w_n\|^2_{W^{s,2}(\Omega)}<C$ ($s=1+\sigma$) so, since $W^{s,2}(\Omega)$ is compactly embedded in $L^{2}(\Omega)$, we deduce that, up to subsequences,  $\{w_n\}$ converges in $L^2(\Omega)$ as wanted.

 Moreover  the operator $\overset{\circ}{K}$ is self-adjoint. Indeed, taking {$h_1,h_2\in C_c^{\infty}(\Omega)$} and using the weak formulation
\eqref{eq:ausiliarioproblema}, for every $v\in {H^s_{\mathcal N, \vec 0}(\Omega)}$ we get that
\begin{equation}\label{eq:gen1}
\int_{\Omega}v\,Kh_1 dx+\frac{c_{N,\sigma}}{2} \int_{Q(\Omega)}\frac{(\nabla Kh_1(x)-\nabla Kh_1(y))\cdot(\nabla v(x)-\nabla v(y))}{|x-y|^{N+2\sigma}}\, dx dy=\int_{\Omega}h_1v dx
\end{equation}
and
\begin{equation}\label{eq:gen2}
\int_{\Omega}v\,Kh_2 dx+\frac{c_{N,\sigma}}{2} \int_{Q(\Omega)}\frac{(\nabla Kh_2(x)-\nabla Kh_2(y))\cdot(\nabla v(x)-\nabla v(y))}{|x-y|^{N+2\sigma}}\, dx dy=\int_{\Omega}h_2v.
\end{equation}
Using $v=Kh_2$ as test function in \eqref{eq:gen1} and $v=Kh_1$ as test function in \eqref{eq:gen2}, by \eqref{eq:comaaaavasc}, we deduce
\begin{equation}\label{eq:avbbbbbb}
\int_{\Omega}h_1\overset{\circ}{K}h_2\,dx=\int_{\Omega}h_2\overset{\circ}{K}h_1 \, dx.\end{equation}
Then  by a {density argument},  \eqref{eq:avbbbbbb} holds for $h_1,h_2\in L^2(\Omega)$ so this implies that $\overset{\circ}{K}$ is self-adjoint.
To conclude the proof in the homogeneous case we will show that
\begin{equation}\label{palma}
Ker\,(Id- \overset{\circ}{K})=\mathcal P_1(\mathbb R^N),
\end{equation}
that is, the Kernel of the operator $Id- \overset{\circ}{K}$ is the space of affine functions given in Definition \ref{def:affinfunctions}.
Let $p\in \mathcal P_1(\mathbb R^N)$, since $\nabla p$ is constant, firstly is clear that and observe that
$$
\int_{\Omega}pv\, dx+\frac{c_{N,\sigma}}{2} \int_{Q(\Omega)}\frac{(\nabla p(x)-\nabla p(y))\cdot(\nabla v(x)-\nabla v(y))}{|x-y|^{N+2\sigma}}\, dx dy=\int_{\Omega}p v\,dx.
$$
Moreover, using the definitions  \eqref{eq:neumann1} and \eqref{eq:neumann2}, it is also true that $\mathcal N^1_\sigma p (x)=0$ and $\mathcal N^2_\sigma p (x)=0$. Therefore $Kp (x)=p (x)$ in $\mathbb R^N$ and hence $\overset{\circ}{K}p(x)=p(x)$ in $\Omega$.
This shows that
$$\mathcal P_1(\mathbb R^N)\subset Ker\,(Id- \overset{\circ}{K}).$$
 The reverse inclusion is also true. In fact, taking now $w\in Ker\,(Id- \overset{\circ}{K})\subseteq L^2(\Omega)$, that is, $w=\overset{\circ}{K}w=Kw$ in $\Omega$, by the definition of $K$ we have that \begin{eqnarray}\label{eq:cuatrotrece}
&&\int_{\Omega}(Kw)v\, dx+\frac{c_{N,\sigma}}{2} \int_{Q(\Omega)}\frac{(\nabla Kw(x)(x)-\nabla Kw(x)(y))\cdot(\nabla v(x)-\nabla v(y))}{|x-y|^{N+2\sigma}}\, dx dy\\\nonumber
&&=\int_{\Omega}wv\, dx,\quad \forall v\in{H^s_{\mathcal N, \vec 0}(\Omega)}.
\end{eqnarray}
Then taking $v=w$ as a test function in \eqref{eq:cuatrotrece} we get
\[{\int_{Q(\Omega)}}\frac{|\nabla w(x)-\nabla w(y)|^2}{|x-y|^{N+2\sigma}}\, dx dy=0,
\]
which in particular implies that $w$ is a affine function, that is, $w\in \mathcal P_1(\mathbb R^N)$ as wanted.

Once we have proved \eqref{palma} applying the Fredholm alternative we obtain
\begin{center}
$Im(Id- \overset{\circ}{K})=Ker\,(Id- \overset{\circ}{K})^{\perp}=\mathcal P_1(\mathbb R^N)^{\perp},$
\end{center}
that is
%\begin{equation}\label{eq:ababavavasiuugidqe}
$$Im(Id- \overset{\circ}{K})=\Big\{f\in L^2(\Omega)\,:\,(f,p)_{L^2(\Omega)}=0,\, p\in \mathcal P_1(\mathbb R^N)^{\perp}\Big\},$$
%\end{equation}
where by $(\cdot,\cdot)_{L^2(\Omega)}$ we denote the classical inner product in $L^2(\Omega)$.  By Theorem \ref{key_th} we have  that
\begin{equation}\label{fredh}
\mbox{the homogeneous problem \eqref{eq:p} has a solution if and only if $f\in \mathcal{P}_1(\mathbb R^N)^{\perp}$.}
\end{equation}
We can obtain again the same result by using the previous arguments:

Consider $f\in P_1(\mathbb R^N)^{\perp}=Im(Id- \overset{\circ}{K})$.  Then there exists $h\in L^2(\Omega)$ such that
\begin{equation}\label{eq:ffnumero}f=h-\overset{\circ}{K}h.\end{equation}
If we set $u={K}h$, then by construction, for every $v\in H^s_{\mathcal N, \vec 0}(\Omega)$, we get
\begin{equation}\label{eq:coffeinfaccia}
\int_{\Omega}uv\, dx+\frac{c_{N,\sigma}}{2} \int_{Q(\Omega)}\frac{(\nabla u(x)-\nabla u(y))\cdot(\nabla v(x)-\nabla v(y))}{|x-y|^{N+2\sigma}}\, dx dy=\int_{\Omega}hv\, dx,
\end{equation}
with
$$\mathcal N^1_\sigma u(x)=0,\qquad x\in \mathbb{R}^N\setminus\overline \Omega,\quad\text{and}\quad\mathcal N^2_\sigma u(x)=0,  \qquad x\in \partial \Omega.$$
Since $u=Kh=\overset{\circ}Kh$ in $\Omega$, from \eqref{eq:ffnumero} and \eqref{eq:coffeinfaccia}  it follows that
\begin{equation}\label{eq:solproblomogneo}
\begin{cases}
(-\Delta)^s u = f(x) &\text{in}\,\,\Omega\\
\mathcal N_\sigma^1 u  =0&\text{in}\,\,\mathbb{R}^N\setminus\overline\Omega\\
\mathcal N_\sigma^2 u=0 &\text{on}\,\,\partial\Omega,
\end{cases}\end{equation}
in the weak sense. Thus, $u$ is the desired solution. On the other hand if $u\in {H^s_{\mathcal N, \vec 0}(\Omega)}$ is a weak solution of \eqref{eq:solproblomogneo}, then we have
\[(-\Delta)^su+u=f+u,\quad \text{in}\,\, \Omega,\]
that is, by the definition of $K$, one has $u=K\big(u+f\big)$ in $\mathbb R^N$ and then  $u=\overset{\circ}K\big(u+f\big )$ in $\Omega$.
We deduce that
\[(Id-\overset{\circ}K)\big (u+f\big )=f, \quad \text{in}\,\, \Omega.\]
Then $f$ belongs to $Im(Id- \overset{\circ}{K})$ and, therefore,   it is such that $(f,p)_{L^2(\Omega)}=0$ for all functions $p\in \mathcal P_1(\mathbb R^N)$ as wanted.

This says that the nonhomogeneous case of problem  \eqref{eq:p} can be solved if we have an additional condition of the data, that is,
 if there exists $\psi$ sufficiently smooth such that
\[\mathcal{N}^1_\sigma(\psi)=g_1\,\, \text{in}\,\,\mathbb{R}^N\setminus\overline\Omega\qquad \text{and}\qquad \mathcal{N}^2_\sigma(\psi)=g_2 \,\,
\text{on}\,\,\partial \Omega.\]
If this is the case, then for   $f$,   $g_1$  and $g_2$  admissible data, we have that
\begin{equation}\nonumber
\int_{\Omega} fp \, dx +\int_{\mathbb R^N\setminus \Omega}\mathcal{N}^1_\sigma(\psi)p\, dx+\int_{\partial \Omega}\mathcal{N}^2_\sigma(\psi)p\, dS=0, \qquad \text{for all }\,\,  p\in \mathcal P_1(\mathbb R^N),
\end{equation}
By  Proposition \ref{pro:cuatrodiec} we obtain
\begin{equation}\label{eq:eccomiiii}
\int_{\Omega} \big(f -(-\Delta)^s\psi\big)p\,dx=0, \qquad \text{for all }\,\,  p\in \mathcal P_1(\mathbb R^N)
\end{equation}
Thus, by \eqref{fredh} and \eqref{eq:eccomiiii}, there exists a weak solution $\hat u$  to
\begin{equation}\nonumber
\begin{cases}
(-\Delta)^s \hat u = \hat f(x) &\text{in}\,\,\Omega\\
\mathcal N_\sigma^1 \hat u  =0&\text{in}\,\,\mathbb{R}^N\setminus\overline\Omega\\
\mathcal N_\sigma^2 \hat u=0 &\text{on}\,\,\partial\Omega,
\end{cases}\end{equation}
where
$$\hat f= f -(-\Delta)^s\psi\in Im(Id- \overset{\circ}{K}).$$
Therefore, defining $u:=\hat u +\psi$ we get that $u\in H^s_{\mathcal N, g_1, g_2}(\Omega)$ is a weak solution to
\begin{equation}\nonumber
\begin{cases}
(-\Delta)^s  u =  f(x) &\text{in}\,\,\Omega\\
\mathcal N_\sigma^1  u  =g_1&\text{in}\,\,\mathbb{R}^N\setminus\overline\Omega\\
\mathcal N_\sigma^2 u=g_2&\text{on}\,\,\partial\Omega.
\end{cases}\end{equation}
In both cases, homogeneous and non-homogeneous, the uniqueness up to a function $p\in \mathcal P_1(\mathbb R^N)$, follows easily by contradiction   using Lemma~\ref{eq:uniquuuueneessssss}.
\section{Spectral theory}
We will develop now the spectral theory associated to problem $\eqref{eq:p}$ using some general results established for compact operators. More precisely the complete description of the {structure} of the eigenvalues and eigenfunctions are given in the following
\begin{theorem}\label{spectral}
Let $\Omega\subset\mathbb{R}^{N}$ be a regular bounded domain. Then there exist a nondecreasing sequence $\{\lambda_i\}\geq 0$ and a sequence of functions $u_i:\mathbb{R}^{N}\to\mathbb{R}$ such that
$$
(\mathcal{P}_i)=\begin{cases}\label{ii}
(-\Delta)^s u_i = \lambda_i\, u_i &\text{in}\,\,\Omega\\
\mathcal N_\sigma^1 u_i  =0&\text{in}\,\,\mathbb{R}^N\setminus\overline\Omega\\
\mathcal N_\sigma^2 u_i=0 &\text{on}\,\,\partial\Omega,
\end{cases}
$$
Moreover the functions ${u_i}\big|_{\Omega}$ form a complete ortogonal system in the space $L^{2}(\Omega)$.
\end{theorem}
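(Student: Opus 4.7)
The plan is to reduce the spectral problem to the classical spectral theorem for compact self-adjoint positive operators, applied to the resolvent-type operator $\overset{\circ}{K}\colon L^2(\Omega)\to L^2(\Omega)$ constructed at the end of Section~\ref{sec:variational}. Recall that, for $h\in L^2(\Omega)$, one has $\overset{\circ}{K}h=(Kh)|_\Omega$, where $w=Kh\in H^s_{\mathcal N,\vec 0}(\Omega)$ is the unique solution of
\begin{equation*}
\int_\Omega wv\,dx + \frac{c_{N,\sigma}}{2}\iint_{Q(\Omega)}\frac{(\nabla w(x)-\nabla w(y))\cdot(\nabla v(x)-\nabla v(y))}{|x-y|^{N+2\sigma}}\,dx\,dy = \int_\Omega hv\,dx,
\end{equation*}
for all $v\in H^s_{\mathcal N,\vec 0}(\Omega)$. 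The paper has already shown that $\overset{\circ}{K}$ is compact (via the Rellich-type embedding $W^{s,2}(\Omega)\hookrightarrow L^2(\Omega)$) and self-adjoint, so the Hilbert--Schmidt spectral theorem provides a countable sequence of real eigenvalues $\{\mu_i\}$ with $\mu_i\to 0$ together with an associated orthogonal complete system of eigenfunctions $\{h_i\}\subset L^2(\Omega)$.

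Next I would locate the spectrum. Testing the defining identity for $w=Kh$ against $v=w$ yields
\begin{equation*}
\int_\Omega h\,\overset{\circ}{K}h\,dx=\int_\Omega w^2\,dx+\frac{c_{N,\sigma}}{2}\iint_{Q(\Omega)}\frac{|\nabla w(x)-\nabla w(y)|^2}{|x-y|^{N+2\sigma}}\,dx\,dy\ge 0,
\end{equation*}
so $\overset{\circ}{K}$ is positive semi-definite and every $\mu_i\ge 0$. If $\overset{\circ}{K}h_i=\mu_i h_i$ with $\mu_i>0$, then $Kh_i|_\Omega=\mu_i h_i$ and inserting this into the previous display forces $\mu_i^2\|h_i\|_{L^2(\Omega)}^2\le \mu_i\|h_i\|_{L^2(\Omega)}^2$, so that $\mu_i\in(0,1]$. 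By \eqref{palma}, $\mathrm{Ker}(Id-\overset{\circ}{K})=\mathcal P_1(\mathbb R^N)$, hence $\mu=1$ is an eigenvalue of multiplicity $N+1$; the remaining eigenvalues are arranged in a nonincreasing sequence $1>\mu_{N+1}\ge\mu_{N+2}\ge\cdots\to 0^+$.

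The translation to the Neumann eigenvalue problem is then immediate. For each $\mu_i>0$, lift the eigenfunction to $\mathbb R^N$ by $u_i:=Kh_i\in H^s_{\mathcal N,\vec 0}(\Omega)$, so that $u_i|_\Omega=\mu_i h_i$. Substituting $h_i=u_i/\mu_i$ on $\Omega$ into the definition of $K$, the $L^2$ terms combine and one obtains
\begin{equation*}
\frac{c_{N,\sigma}}{2}\iint_{Q(\Omega)}\frac{(\nabla u_i(x)-\nabla u_i(y))\cdot(\nabla v(x)-\nabla v(y))}{|x-y|^{N+2\sigma}}\,dx\,dy = \lambda_i\int_\Omega u_i v\,dx,
\end{equation*}
for every $v\in H^s_{\mathcal N,\vec 0}(\Omega)$, where $\lambda_i:=(1-\mu_i)/\mu_i\ge 0$. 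This is exactly the weak formulation of $(\mathcal P_i)$ in the sense of Definition~\ref{def:weaksolution}. Since $\mu\mapsto(1-\mu)/\mu$ is decreasing on $(0,1]$, $\{\lambda_i\}$ is nondecreasing, starts with $\lambda_0=\cdots=\lambda_N=0$ (from the $\mathcal P_1$-kernel) and diverges to $+\infty$. Orthogonality and completeness of $\{u_i|_\Omega\}$ in $L^2(\Omega)$ follow at once, because each $u_i|_\Omega$ is a nonzero scalar multiple of $h_i$.

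The main obstacle, which I would prepare carefully, is the bookkeeping around the $\mu=1$ eigenspace: one must verify that the degenerate Neumann eigenvalue $\lambda=0$ is attained precisely by the $(N+1)$-dimensional space $\mathcal P_1(\mathbb R^N)$ (already supplied by \eqref{palma}) and that the correspondence $\lambda=(1-\mu)/\mu$ transports the complete $L^2(\Omega)$-orthogonal system from $\{h_i\}$ to $\{u_i|_\Omega\}$ without loss. Everything else reduces to routine compact self-adjoint functional analysis for $\overset{\circ}{K}$.
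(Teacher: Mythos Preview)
Your argument is correct but follows a genuinely different route from the paper's.

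The paper does not reuse the resolvent $\overset{\circ}{K}=(I+(-\Delta)^s)^{-1}$ on the full space $L^2(\Omega)$. Instead it introduces a new operator $\overset{\circ}{T}\colon L^{2,0}(\Omega)\to L^{2,0}(\Omega)$, where $L^{2,0}(\Omega)=\mathcal P_1(\mathbb R^N)^{\perp}$ and $Tf$ is the unique solution in $H^{s,0}_{\mathcal N,\vec 0}(\Omega)$ of the \emph{pure} Neumann problem $(-\Delta)^s u=f$ (well-posed precisely because $f\perp\mathcal P_1$). The paper then checks that $\overset{\circ}{T}$ is compact, self-adjoint and positive on $L^{2,0}(\Omega)$, applies the spectral theorem there to obtain eigenpairs $(\mu_i,u_i)_{i\ge 2}$ with $\lambda_i=1/\mu_i$, and afterwards adjoins by hand the eigenvalue $\lambda_1=0$ with eigenfunctions $\{1,x_1,\dots,x_N\}$; completeness in $L^2(\Omega)$ is then reassembled from the direct sum $L^2(\Omega)=\mathcal P_1\oplus L^{2,0}(\Omega)$.

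Your shifted-resolvent approach packages all of this in one step: working with $\overset{\circ}{K}$ on the whole of $L^2(\Omega)$, the kernel $\mathcal P_1(\mathbb R^N)$ appears automatically as the $\mu=1$ eigenspace via \eqref{palma}, and the correspondence $\lambda=(1-\mu)/\mu$ handles both the zero and the positive Neumann eigenvalues uniformly, so no separate completeness argument is needed. The trade-off is that the paper's operator $\overset{\circ}{T}$ is the honest inverse of the Neumann Laplacian on the orthogonal complement of its kernel, which makes the identification $\lambda_i=1/\mu_i$ slightly more transparent, at the cost of the two-step reassembly.

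One small point you should make explicit: to guarantee that every $\mu_i$ is strictly positive (so that $\lambda_i=(1-\mu_i)/\mu_i$ is defined and the eigenfunctions of $\overset{\circ}{K}$ span all of $L^2(\Omega)$ without a zero-eigenspace contribution), note that $\overset{\circ}{K}h=0$ forces $w=Kh$ to vanish on $\Omega$, whence testing against $v=w$ gives $D^2(w)=0$, so $w\in\mathcal P_1(\mathbb R^N)$ and therefore $w\equiv 0$; the defining identity then yields $h=0$.
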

\begin{proof}
First of all we define de set
%$$\mathcal{H}^{s,0}:=\{u:\mathbb{R}^{N}\to \mathbb{R}:\, [\nabla u]_{H^{\sigma}}<\infty,\, \int_{\Omega}{u\, p\, dx}=0,\, p\in\mathcal{P}_{1}(\mathbb{R}^{N})\},$$
%with
%$$[\nabla u]_{H^{\sigma}}=\int_{Q(\Omega)}\frac{|\nabla u(x)-\nabla u(y)|^2}{|x-y|^{N+2\sigma}}\, dx dy.$$
\begin{equation}\label{eq:trump}
L^{2,0}(\Omega):=\{u\in L^2(\Omega):\, \int_{\Omega}{u\, p\, dx}=0,\, {\forall}p\in\mathcal{P}_{1}(\mathbb{R}^{N})\},\end{equation}
{that contains the set $H^{s,0}_{\mathcal{N},\vec 0}(\Omega)$ defined in \eqref{noviembre_set} for $g_1=g_2=0.$}
%Following the ideas used to prove Proposition \ref{Completeness} is clear that for every $u,\varphi\in H^{s,0}_{\mathcal{N},0}(\Omega)$,
%\begin{equation}\label{innerproduct}
%\langle u,\varphi\rangle:=\int_{Q(\Omega)}\frac{(\nabla u(x)-\nabla u(y))(\nabla \varphi(x)-\nabla \varphi(y))}{|x-y|^{N+2\sigma}}\, dx dy,
%\end{equation}
%define a inner product in $H^{s,0}_{\mathcal{N},0}(\Omega)$.
Let us now consider the linear operator $T: L^{2,0}(\Omega)\to H^{s,0}_{\mathcal{N},\vec 0}(\Omega)$, such that $T(f):=u$ where $u$ is the (unique) solution of the problem
$$\begin{cases}
(-\Delta)^s u= f&\text{in}\,\,\Omega\\
\mathcal N_\sigma^1 u =0&\text{in}\,\,\mathbb{R}^N\setminus\overline\Omega\\
\mathcal N_\sigma^2 u=0 &\text{on}\,\,\partial\Omega,
\end{cases}$$
given in Theorem \ref{main}, recalling that $f$ satisfies \eqref{eq:unodiez}. We observe that the uniqueness come from the fact that $L^{2,0}(\Omega)$ is a closed subspace of $L^{2}(\Omega)$ and $L^{2,0}(\Omega)=\mathcal{P}_{1}(\mathbb{R}^{N})^{\perp}$.
As in the proof of Theorem \ref{main} we define the restriction  operator $\overset{\circ}{T}$ as
\begin{equation}\label{eq:colombia}
\overset{\circ}{T}f=Tf\big|_{\Omega}
\end{equation}
and therefore
$$\overset{\circ}T:L^{2,0}(\Omega) \to L^{2,0}(\Omega).$$
%Taking into acount the weak formulation proved in Theorem \ref{pro:intbyparts2} we obtain that
%\begin{equation}\label{uno}
%\langle \varphi, u\rangle =(\varphi, f)_{L^2(\Omega)},\quad \varphi\in H^{s,0}_{\mathcal{N},0}(\Omega),
%\end{equation}
%where $\langle\cdot, \cdot\rangle$ is given in \eqref{innerproduct}, so that
%\begin{equation}\label{dos}
%\langle \varphi, Tf\rangle =(\varphi, f)_{L^2(\Omega)},\quad \varphi\in H^{s,0}_{\mathcal{N},0}(\Omega).
%\end{equation}
 With this notation {is clear that a function $u_i$ is a solution of problem $(\mathcal{P}_i)$ if and only if
\begin{equation}\label{autovaloresT}
\mbox{$u_i=T(\lambda_i u_i)=\lambda_i T(u_i)$,}
\end{equation}
therefore} it is possible to transform the question of the solvability of $(\mathcal{P}_i)$,  in the investigation of the eigenvalues and eigenfunctions of the operator $\overset{\circ}T$.
% is clear that a function $u_i$ is a solution of problem $(\mathcal{P}_i)$ if
%\begin{equation}\label{autovaloresT}
%\mbox{$u_i=T(\lambda_i u_i)=\lambda_i T(u_i)$,}
%\end{equation}
In order to use the well-know theory that establish the spectral properties of the operator, we will prove that $\overset{\circ} T$ is compact, self-adjoint and  positive in the Hilbert space $L^{2,0}(\Omega)$. Indeed using the weak formulation \eqref{eq:dbahgduvasc}, for every $f_1,f_2\in L^{2,0}(\Omega)$ and $v, \varphi,\in H^{s}_{\mathcal{N},\vec 0}(\Omega)$  it follows
\begin{eqnarray}\label{eq:dbahgduvasco}
&&\frac{c_{N,\sigma}}{2} \int_{Q(\Omega)}\frac{({\nabla Tf_1}(x)-\nabla Tf_1(y))\cdot(\nabla v(x)-\nabla v(y))}{|x-y|^{N+2\sigma}}\, dx dy=\int_{\Omega}f_1v\,dx,\\\nonumber
&&\frac{c_{N,\sigma}}{2} \int_{Q(\Omega)}\frac{(\nabla Tf_2(x)-\nabla Tf_2(y))\cdot(\nabla  \varphi(x)-\nabla  \varphi(y))}{|x-y|^{N+2\sigma}}\, dx dy=\int_{\Omega}f_2 \varphi\,dx.
\end{eqnarray}
Thus,  since $\overset{\circ}{T}f=Tf$ in $\Omega$, arguing as in equations \eqref{eq:gen1}-\eqref{eq:avbbbbbb}, we conclude that
$T$ is self-adjoint in $L^{2,0}(\Omega)$.
Further, using again \eqref{eq:dbahgduvasco}, it follows
$$(\overset{\circ}{T}f, f)_{L^2(\Omega)}=({T}f, f)_{L^2(\Omega)}=\frac{c_{N,\sigma}}{2} \int_{Q(\Omega)}\frac{|(\nabla Tf(x)-\nabla Tf(y))|^2}{|x-y|^{N+2\sigma}}\, dx dy\geq 0,$$
for every $f\in L^{2,0}(\Omega)$. {Moreover if $(\overset{\circ}{T}f, f)_{L^2(\Omega)}=0$ then $f\equiv 0$. Indeed if
$$(\overset{\circ}{T}f, f)_{L^2(\Omega)}=\int_{Q(\Omega)}\frac{|(\nabla Tf(x)-\nabla Tf(y))|^2}{|x-y|^{N+2\sigma}}\, dx dy=0,$$
then ${T}f\in\mathcal{P}_{1}(\mathbb{R}^{n})$ so that by \eqref{eq:dbahgduvasco} we deduce that $f\equiv 0$ as wanted.}
That is, the operator $\overset{\circ}{T}f$ is positive in $L^{2,0}(\Omega)$. Finally we will show that $T$ is compact in $L^{2,0}(\Omega)$. In fact, from \eqref{eq:dbahgduvasco}, with $Tf=u$ and $v=u$,   we get that
\begin{equation}\label{cero}
{\left(\int_{\Omega}\int_{\Omega}\frac{|\nabla u(x)-\nabla u(y)|^{2}}{|x-y|^{N+2\sigma}}\, dx dy\, \right)}\leq \int_{Q(\Omega)}\frac{|\nabla u(x)-\nabla u(y)|^{2}}{|x-y|^{N+2\sigma}}\, dx dy\leq \|f\|_{L^{2}(\Omega)}\|u\|_{L^{2}(\Omega)}.
\end{equation}
{Since the Poicar\'e inequality given in \eqref{poincare} is clearly satisfied by every $u\in H^{s,0}_{\mathcal{N},\vec 0}(\Omega)$}, from \eqref{cero} it follows that
\begin{equation}\label{laotra}
\left(\int_{\Omega}\int_\Omega\frac{|\nabla u(x)-\nabla u(y)|^{2}}{|x-y|^{N+2\sigma}}\, dx dy\right)^{\frac 12} \leq C\|f\|_{L^{2}(\Omega)}.
\end{equation}
Let us now consider $\{f_n\}$ a bounded sequence in $\in L^{2,0}(\Omega)$. By \eqref{poincare}  and \eqref{laotra},  repeating the arguments done to prove {\eqref{orden_noviembre} in Lemma \ref{equivalencia}, we infer that} $\{u_n=\overset{\circ} Tf_n\}$ is also bounded in the space $W^{s,2}(\Omega)$ ($s=1+\sigma$). Therefore since,  in particular, the inclusion $W^{s,2}(\Omega)\to L^{2}(\Omega)$ is compact, by subsequence, $u_n\to u$ in $L^{2}(\Omega)$.

Once we have proved that $\overset{\circ}{T}$ is compact, self-adjoint and a positive operator {in the separable space $L^{2,0}(\Omega)$} then (see for instance \cite[Theorem 3.8]{re}) the operator $\overset{\circ}{T}$ has a countable set of eigenvalues $\{\mu_{i}\}_{i\geq 2}$, all of them being positive. In particular  $$\mu_2\geq \mu_{3}\geq \ldots >0,\, \mbox{satisfying $\lim_{i\to\infty}{\mu_{i}}=0$.}$$
To the sequence $\{\mu_{i}\}_{i\geq 2}$ there corresponds a finite number of linearly independent eigenfuntions $\{{u}_{i}\}_{i\geq 2}$ that form a complete orthonormal system in $L^{2,0}(\Omega)$. %We define
%$u_i=T(v_i)$ and therefore by \eqref{eq:colombia}
%\[u_i=T(v_i)=\overset{\circ}{T}(v_i)=\mu_i v_i, \quad \text{in }\Omega .\]
%Thus $\{u_{i}\}_{i\geq 2}$ is a complete  ortonormal system in  $L^{2,0}(\Omega)$ as well.
Moreover, {as we noticed in \eqref{autovaloresT}}
\begin{eqnarray}\label{eq:lavorohopromesso}
\frac{c_{N,\sigma}}{2} \int_{Q(\Omega)}\frac{(\nabla u_i(x)-\nabla u_i(y))\cdot(\nabla \varphi (x)-\nabla  \varphi(y))}{|x-y|^{N+2\sigma}}\, dx dy&=&\frac{1}{\mu_i}\int_{\Omega}u_i \varphi\,dx\\\nonumber
&=:&\lambda_i\int_{\Omega}u_i \varphi\,dx.
\end{eqnarray}
Thus, by \eqref{eq:lavorohopromesso}  we finally infer  that
$$\{\lambda_i:=1/\mu_i,\, u_{i}\}_{i\geq 2},$$
form part of the suitable family of eigenfunctions and eigenvalues of $(\mathcal{P}_i)$ that we are looking for. To complete this family we observe that, by Lemma \ref{eq:uniquuuueneessssss}, it follows that   $\lambda_{1}=0$ is an eigenvalue with eigenfunctions
\begin{equation}\label{eq:flightttt}
\{u_{1,0}(x)=1,\, u_{1,1}(x)=x_1,\, \ldots u_{1,N}(x)=x_N\}.\end{equation}
Therefore, up to a reordering, we have obtained the sequence of eigenvalues
$$0=\lambda_1<\lambda_2\leq\ldots,\quad \lim_{i\to\infty}{\lambda_i}=\infty,$$
and its corresponding eigenfunctions $\{\{u_{1,j}\}^{N}_{j=0},\, u_{i}\}_{i\geq 2}$ that are a complete orthogonal system in $L^{2}(\Omega)$. Indeed,  as we have seen above,  the eigenfunctions $\{{u}_{i}\}_{i\geq 2}$ are  orthonormal w.r.t the $L^2$-scalar product and moreover, each  ${u}_i$  is orthogonal to  the subspace generated by the eigenfunctions~\eqref{eq:flightttt}, since the system  $\{{u}_i\}_{i\geq 2}$  belongs to  $L^{2,0}(\Omega)$.
Finally, to show that the orthogonal system is  maximal in $L^2(\Omega)$, let us consider $h\in L^{2}(\Omega)$ and  we define
$$\widetilde{h}=h-h_{1},$$
where $h_1$ is the orthogonal  projection (w.r.t. the $L^2$-scalar product) of $h$ in the subspace $\mathcal{P}_{1}(\Omega)$. Then $\widetilde{h}\in L^{2,0}(\Omega)$ and  $h_1=b_0+(b_j,x)\in\mathcal{P}_{1}(\Omega)$, for some  $b_j\in\mathbb{R},\, j=0,\,\ldots N$.
Since $\widetilde{h}\in L^{2,0}(\Omega)$ and $\{u_i\}_{i\geq 2}$ forms a complete system in $L^{2,0}(\Omega)$, then we obtain that
\begin{equation}\label{jestrella}
\lim_{k\to \infty}\left\|\widetilde{h}-\sum_{i=2}^{k} a_iu_i\right\|_{L^2(\Omega)}=0,
\end{equation}
for some real numbers $\{a_i\}$.  Moreover,
$$\widetilde{h}= h-\sum_{j=0}^{N}a_{1,j}\, u_{1,j}.$$ Thus, by \eqref{jestrella}, it follows that
$$\lim_{k\to \infty}\left\|{h}-\sum_{j=0}^{N}a_{1,j}u_{1,j}-\sum_{i=2}^{k} a_iu_i\right\|_{L^2(\Omega)}=0,$$
as wanted.
\end{proof}
\section{Further results and problems}
In this final section we describe in an informal way some further results and interesting open problems related with what we have seen in the previous sections.
\subsection{The  Neumann problem for $(-\Delta)^su $ in the case $s>2$}
In this subsection, using several integrations by parts (i.b.p., in short), we highlight the generalization in the higher-order case $s>2$ of  Proposition~\ref{pro:intbyparts2}, which was basic to define the Neumann problem. We write $s=m+\sigma$ and consider the case $m\geq 2$, even. {The case $m\geq3$ and odd can be obtained in the same way as in the proof of Proposition
\ref{pro:intbyparts2}. Therefore we skip this case.}

\

{\bf Case:  $m\geq 2$, even.} Let us define the natural Neumann conditions that come from the following non local  higher-order integration by parts formula. For  suitable $v\in \mathcal{S}(\mathbb{R}^{N})$, we define
\begin{equation}\label{eq:allanimaa}
\mathcal N^{1,\,{(i-1)}}_\sigma v(x){:=\Delta^{i-1}\big((-\Delta)^{\sigma}_{\mathbb{R}^N\setminus \Omega}(\Delta^{\frac m2} v)}\big)(x),  \qquad x\in \partial \Omega \quad \text{and}\quad i=1,2,\ldots, m/2
\end{equation}
and
\begin{equation}\label{eq:allanimaa1}
\mathcal N^{2,\,{(i-1)}}_\sigma v(x){:=\frac{\partial}{\partial\nu}\Delta^{i-1}\big((-\Delta)^{\sigma}_{\mathbb{R}^N\setminus \Omega}(\Delta^{\frac m2} v)}\big)(x),  \qquad x\in \partial \Omega \quad \text{and}\quad i=1,2,\ldots, m/2,
\end{equation}
with $\nu$ denoting the unit outer normal to $\partial \Omega$. {With these definitions, {\em for suitable} $u,v\in \mathcal{S}(\mathbb{R}^{N})$ (in particular with $u$ satisfying   similar hypotheses  to \eqref{condition3}) it  can be shown   the following}
%\begin{theorem}
%Let $u,v\in \mathcal{S}(\mathbb{R}^{N})$. Then
\begin{eqnarray}\nonumber
&&\frac {c_{N,\sigma}}{2}\int_{Q(\Omega)}\frac{(\Delta^{\frac m2} u(x)-\Delta^{\frac m2} u(y))(\Delta^{\frac m2} v(x)-\Delta^{\frac m2} v(y))}{|x-y|^{N+2\sigma}}\,dx\,dy\\\nonumber
&&=\int_{\Omega}v\,(-\Delta)^su\, dx +\int_{\mathbb{R}^N\setminus \Omega} \Delta^{\frac m2}\mathcal N_\sigma(\Delta^{\frac m2}u(x))\, v(x)\, dx\\\nonumber
&&+\sum_{i=1}^{m/2}\int_{\partial \Omega}\frac{\partial}{\partial\nu}\left(\Delta^{\frac{m-2i}{2}}v(x)\right)\, \mathcal N^{1,\,{(i-1)}}_\sigma u(x)\, dS-\sum_{i=1}^{m/2}\int_{\partial \Omega}\mathcal N^{2,\,{(i-1)}}_\sigma u(x)\,\Delta^{\frac{m-2i}{2}}v(x)\, dS.
\end{eqnarray}
%\end{theorem}
{In fact, roughly speaking,}  denoting by $\nu$ the unit outer normal field to the boundary  $\partial \Omega$ and integrating twice by parts we obtain
\begin{eqnarray}\label{eq:cicoooo}
&&\frac {c_{N,\sigma}}{2}\int_{Q(\Omega)}\frac{(\Delta^{\frac m2} u(x)-\Delta^{\frac m2} u(y))(\Delta^{\frac m2} v(x)-\Delta^{\frac m2} v(y))}{|x-y|^{N+2\sigma}}\,dx\,dy\\\nonumber
&&=c_{N,\sigma}\int_{\Omega}\Delta^{\frac m2} v(x)\int_{\mathbb R^N}\frac{(\Delta^{\frac m2} u(x)-\Delta^{\frac m2} u(y))}{|x-y|^{N+2\sigma}} \, dy\, dx
\\\nonumber &&+ c_{N,\sigma}\int_{\mathcal C \Omega}\Delta^{\frac m2} v(x)\int_{\Omega}\frac{(\Delta^{\frac m2} u(x)-\Delta^{\frac m2} u(y))}{|x-y|^{N+2\sigma}}\, dy\, dx\\\nonumber
&&\overset{\text{1th i.b.p.}}{=} -\int_{\Omega}\nabla (\Delta^{\frac{m-2}{2}}v(x))\cdot \nabla \left (c_{N,\sigma}\int_{\mathbb R^N}\frac{(\Delta^{\frac m2} u(x)-\Delta^{\frac m2} u(y))}{|x-y|^{N+2\sigma}} \, dy\right)\, dx\\\nonumber
&&-\int_{\mathbb{R}^N\setminus \Omega}\nabla (\Delta^{\frac{m-2}{2}}v(x))\cdot \nabla \left (c_{N,\sigma}\int_{\Omega}\frac{(\Delta^{\frac m2} u(x)-\Delta^{\frac m2} u(y))}{|x-y|^{N+2\sigma}} \, dy\right)\, dx \\\nonumber
&&+\int_{\partial \Omega}\frac{\partial}{\partial\nu}\left(\Delta^{\frac{m-2}{2}}v(x)\right)\left (c_{N,\sigma}\int_{\mathbb{R}^N\setminus \Omega}\frac{(\Delta^{\frac m2} u(x)-\Delta^{\frac m2} u(y))}{|x-y|^{N+2\sigma}} \, dy\right)\, dS\\\nonumber
&&\overset{\text{2th i.b.p}}{=}\int_{\Omega} \Delta^{\frac{m-2}{2}}v(x)\,\Delta \left (c_{N,\sigma}\int_{\mathbb R^N}\frac{(\Delta^{\frac m2} u(x)-\Delta^{\frac m2} u(y))}{|x-y|^{N+2\sigma}} \, dy\right)\, dx
\\\nonumber
&&+\int_{\mathbb{R}^N\setminus \Omega}\Delta^{\frac{m-2}{2}}v(x)\,\Delta \left (c_{N,\sigma}\int_{\Omega}\frac{(\Delta^{\frac m2} u(x)-\Delta^{\frac m2} u(y))}{|x-y|^{N+2\sigma}} \, dy\right)\, dx
\\\nonumber
&&+\int_{\partial \Omega}\frac{\partial}{\partial\nu}\left(\Delta^{\frac{m-2}{2}}v(x)\right)\left (c_{N,\sigma}\int_{\mathbb{R}^N\setminus \Omega}\frac{(\Delta^{\frac m2} u(x)-\Delta^{\frac m2} u(y))}{|x-y|^{N+2\sigma}} \, dy\right)\, dS\\\nonumber
&&- \int_{\partial \Omega}\Delta^{\frac{m-2}{2}}v(x)\frac{\partial}{\partial \nu}\left (c_{N,\sigma}\int_{\mathbb{R}^N\setminus \Omega}\frac{(\Delta^{\frac m2} u(x)-\Delta^{\frac m2} u(y))}{|x-y|^{N+2\sigma}} \, dy\right)\, dS.
\end{eqnarray}
Then if we continue to integrate by parts, as we did in  \eqref{eq:cicoooo}, after $m/2$ steps we get
\begin{eqnarray}\nonumber
&&\frac {c_{N,\sigma}}{2}\int_{Q(\Omega)}\frac{(\Delta^{\frac m2} u(x)-\Delta^{\frac m2} u(y))(\Delta^{\frac m2} v(x)-\Delta^{\frac m2} v(y))}{|x-y|^{N+2\sigma}}\,dx\,dy\\\nonumber
&&\vdots\\\nonumber
&&\overset{\text{mth i.b.p}}{=}\int_{\Omega} v(x)\Delta^{\frac m2} \left (c_{N,\sigma}\int_{\mathbb R^N}\frac{(\Delta^{\frac m2} u(x)-\Delta^{\frac m2} u(y))}{|x-y|^{N+2\sigma}} \, dy\right)\, dx\\\nonumber
&&+\int_{\mathbb{R}^N\setminus \Omega} v(x)\Delta^{\frac m2} \left (c_{N,\sigma}\int_{\Omega}\frac{(\Delta^{\frac m2} u(x)-\Delta^{\frac m2} u(y))}{|x-y|^{N+2\sigma}} \, dy\right)\, dx\\\nonumber
&&+ \sum_{i=1}^{m/2}\int_{\partial \Omega}\frac{\partial}{\partial\nu}\left(\Delta^{\frac{m-2i}{2}}v(x)\right)\, \Delta^{i-1}\left (c_{N,\sigma}\int_{\mathbb{R}^N\setminus \Omega}\frac{(\Delta^{\frac m2} u(x)-\Delta^{\frac m2} u(y))}{|x-y|^{N+2\sigma}} \, dy\right)\, dS\\\nonumber
&&-\sum_{i=1}^{m/2}\int_{\partial \Omega}\Delta^{\frac{m-2i}{2}}v(x)\frac{\partial}{\partial \nu}\Delta^{i-1}\left (c_{N,\sigma}\int_{\mathbb{R}^N\setminus \Omega}\frac{(\Delta^{\frac m2} u(x)-\Delta^{\frac m2} u(y))}{|x-y|^{N+2\sigma}} \, dy\right)\, dS
\end{eqnarray}
and then we obtain the conclusion using Proposition \ref{pro:equivfraclapl} and equations  \eqref{eq:neumannenrico}, \eqref{eq:allanimaa} and \eqref{eq:allanimaa1}.
\subsection{A semilinear Neumann problem and some open questions}
Consider the problem
\begin{equation}\label{semilinearproblem}
\left\{
\begin{array}{rcll}
d(-\Delta)^s u+u&=&|u|^{p-1}u \quad & x\in \Omega\\
\mathcal{N}_s(u)&=&0\quad & x\in \mathbb{R}^{{N}}\setminus \Omega,
\end{array}
\right.
\end{equation}
where $0<s<1$, $\Omega\subset\mathbb{R}^{{N}}$ is a smooth bounded domain and the diffusion coefficient $d$ is positive.

For the classical Laplacian this problem was  deeply analyzed  by Lin, Ni and Takagi in their classical paper \cite{LNT} where the reader can also see  the motivations of this model in the local case.

First of all we notice that $v_0 =0$ and $v=\pm 1$ are the unique possible constant solutions of problem \eqref{semilinearproblem}. Therefore we will be interested in finding nontrivial solutions.
It is clear that if $1<p< 2^*_s$  this is equivalent to  look for non-constant critical points of the  energy functional,
\begin{equation}\label{energys=1}
J_d(u)=\frac {{d}}{2}\int\int_{Q(\Omega)}\dfrac{|u(x)-u(y)|^2}{|x-y|^{N+2s}}dxdy+\frac 12\int_\Omega u^2dx-\frac 1{p+1}\int_\Omega |u|^{p+1} dx,
\end{equation}
where
$$2^*_s=\left\{
\begin{array}{lll}
&\dfrac{N+2s}{N-2s},\quad &N>2s\\
&+\infty,\quad  &N\le 2s.
\end{array}
\right.
$$
is the critical fractional exponent. Notice that $J_d$ is well defined in
\begin{equation}\label{spacebase1}
{H}^s(\Omega)=\left\{\, u:\mathbb{R}^N\rightarrow\mathbb{R}\,|\, u \hbox{ measurable,  } \,\, {
\iint_{Q(\Omega)} \dfrac{|u(x)-u(y)|^2}{|x-y|^{N+2\sigma}}dxdy}<+\infty\right\},
\end{equation}

Therefore the main result is the following
\begin{theorem}\label{existence}
There exists a nontrivial nonconstant solution, $u_d$, to problem \eqref{semilinearproblem}, provided $d$ is sufficiently small.
\end{theorem}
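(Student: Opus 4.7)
\medskip
\noindent\emph{Proof proposal for Theorem \ref{existence}.}

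The plan is to produce the solution $u_d$ by constrained minimization on the Nehari manifold associated with $J_d$ on the space $H^s(\Omega)$ defined in \eqref{spacebase1}, and then to exclude the constant solutions $\pm 1$ (and the trivial $0$) for small $d$ by an explicit energy comparison using a rescaled ground state. Concretely, set
\[
\mathcal{N}_d:=\Bigl\{u\in H^s(\Omega)\setminus\{0\}\,:\,\langle J_d'(u),u\rangle=0\Bigr\},\qquad c_d:=\inf_{\mathcal{N}_d}J_d,
\]
and observe that on $\mathcal{N}_d$ one has $J_d(u)=\tfrac{p-1}{2(p+1)}\|u\|_{L^{p+1}(\Omega)}^{p+1}\geq 0$, and for any $u\neq 0$ there is a unique scalar $t(u)>0$ with $t(u)u\in\mathcal{N}_d$.

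First I would establish the attainment of $c_d$. Since $1<p<2^*_s$, the embedding $H^s(\Omega)\hookrightarrow L^{p+1}(\Omega)$ is compact, and the Nehari constraint forces a minimizing sequence $\{u_n\}\subset\mathcal{N}_d$ to be bounded in $H^s(\Omega)$; weak limits pass to the limit in the $L^{p+1}$ term by compactness and the weak lower semicontinuity controls the fractional seminorm and the $L^2$ norm. The usual Nehari-manifold argument then gives a minimizer $u_d\in\mathcal{N}_d$, and the Lagrange multiplier identification (standard since $\mathcal{N}_d$ is a $C^1$ regular manifold away from $0$) shows $J_d'(u_d)=0$, i.e.\ $u_d$ is a weak solution of \eqref{semilinearproblem} with the nonlocal Neumann condition of \cite{DRoV}.

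The core point is then to show $u_d\not\equiv\pm 1$ for small $d$. I would compute $J_d(\pm 1)=\frac{p-1}{2(p+1)}|\Omega|$, which is a positive constant independent of $d$. To beat it, I would use a ground state $U\in H^s(\mathbb R^N)$ of
\[
(-\Delta)^s U+U=U^{p}\quad\text{in }\mathbb R^N,
\]
whose existence, positivity and polynomial decay are known (Frank--Lenzmann and co-authors), and, for an interior point $x_0\in\Omega$ and a smooth cutoff $\eta$, define the test function
\[
v_d(x):=\eta(x)\,U\!\left(\frac{x-x_0}{d^{1/(2s)}}\right).
\]
A rescaling calculation gives $\iint_{Q(\Omega)}|v_d(x)-v_d(y)|^2|x-y|^{-N-2s}\,dx\,dy\sim d^{(N-2s)/(2s)}[U]^2_{H^s(\mathbb R^N)}$ and $\|v_d\|_{L^q}^q\sim d^{N/(2s)}\|U\|_{L^q}^q$, so that $d\cdot d^{(N-2s)/(2s)}=d^{N/(2s)}$ and hence $J_d(t(v_d)v_d)=d^{N/(2s)}\bigl(I(U)+o(1)\bigr)$, where $I(U)$ is the finite ground-state energy on $\mathbb R^N$. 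Therefore $c_d\leq J_d(t(v_d)v_d)\to 0$ as $d\to 0^+$, while $J_d(\pm 1)$ stays bounded away from zero, so for $d$ small enough $c_d<J_d(\pm 1)$. Since $0\notin\mathcal{N}_d$, the minimizer $u_d$ is a nontrivial, nonconstant weak solution.

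The main technical obstacle is the sharp scaling estimate for the fractional seminorm of the truncated, rescaled ground state $v_d$. Unlike the local case in \cite{LNT}, the seminorm picks up nonlocal interactions between the concentration region $\{|x-x_0|\lesssim d^{1/(2s)}\}$ and its complement, and $U$ has only polynomial decay at infinity. The argument must carefully split the double integral according to the relative location of $x,y$ with respect to the concentration scale and use the decay rate $U(z)\sim |z|^{-(N+2s)}$ to show that all error terms are $o(d^{N/(2s)})$ and, in particular, negligible with respect to the leading $d^{N/(2s)} I(U)$ contribution. Once this estimate is in hand, everything else—boundedness and attainment on the Nehari manifold, the Lagrange multiplier step, and the final strict inequality $c_d<J_d(\pm 1)$—follows from standard subcritical variational machinery.
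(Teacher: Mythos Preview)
Your proposal is correct, and the overall strategy---obtain a critical point at a variational level $c_d$ and then rule out the constants by showing $c_d\le C d^{N/(2s)}<J_d(\pm1)$ for small $d$---coincides with the paper's. The differences are in the two building blocks. First, you work on the Nehari manifold while the paper applies the Mountain--Pass Lemma of Ambrosetti--Rabinowitz; for this superlinear, subcritical functional both yield the same ground-state level, so this is a matter of taste. Second, and more substantively, you take as test function a truncated rescaling of the fractional ground state $U$ of $(-\Delta)^sU+U=U^p$ on $\mathbb R^N$, whereas the paper uses the elementary, explicitly compactly supported profile $\phi(x)=(1-|x|)_+$, rescaled as $\phi_d(x)=d^{-N/(2s)}\phi(x/d^{1/(2s)})$. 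The paper's choice makes the key estimate $\max_{t\ge 0}J_d(t\phi_d)\le Cd^{N/(2s)}$ completely explicit: the seminorm and the $L^q$ norms of $\phi_d$ are computed by a direct change of variables, with no cutoff errors and no appeal to existence or decay of a limiting ground state. Your route is also valid, and the scaling you write is the right one, but it forces you to control the nonlocal cross terms coming from the cutoff and the polynomial tail of $U$; this is exactly the ``main technical obstacle'' you flag, and it is entirely avoidable here. On the other hand, your choice is the natural one if one later wants to analyze the asymptotic profile of $u_d$ as $d\to 0$, in the spirit of \cite{LNT}.
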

\begin{proof}
Following closely the arguments done in \cite{LNT}, we can use the \textit{Mountain-Pass Lemma} by Ambrosetti-Rabinowitz, \cite{AR}, in order to find critical points of $J_d$. Indeed, it is easy to check that the geometry of the  \textit{Mountain-Pass Lemma}  holds for all $d>0$,  that is, there exists $\rho$  such that
$J_d(u)>0$ for all $0<\|u\|\le \rho$,  $J_d(u)>\beta>0$, $\|u\|=\rho$ and there exists $v$, with $\|v\|>\rho$ such that $J_d({v})<0$.
 Moreover any Palais-Smale sequence is bounded and by the Rellich Compactness Theorem admits a convergent subsequence. Let us assume now that
\begin{equation}\label{emma}
\mbox{$\exists$ $\phi_{d}\in H^s(\Omega)$, and $t_1,\widetilde{C}>0$ such that $J_d(t_1\phi_d)=0$ and $J_d(t\phi_d)<\widetilde{C}d^{\frac{N}{2s}}$, $0\leq t\leq t_1$.}
\end{equation}
Then if $\Gamma=\{\gamma\in \mathcal{C}\left([0,1],H^{s}(\Omega)\right)\,|\, \gamma(0)=0, \gamma(1)={t_1\phi_d}\}$, it follows that the minimax value
$$c_d=\inf\limits_{\gamma\in \Gamma}\max\limits_{[0,1]}J_d(\gamma(t)){\geq\beta>0=J_d(0)},$$
is a crititical value of $J_d$, {that is, there exists a solution $\widetilde{u}$ such that $J_{d}(\widetilde{u})=c_{d}$. Moreover since, by \eqref{emma}, taking $d$ small enough,
$$J_d(\widetilde{u})=c_d\leq max_{[0,t_1]} J_d(t\phi_d)\leq \widetilde{C}d^{N/2s}< \left(\frac 12-\frac{1}{p+1}\right)|\Omega|=J_d(1)=J_d(-1),$$
we conclude that  in the set $J_d^{-1}(c_d)$ there is some nonconstant critical point.}

To prove \eqref{emma} let us consider
$$\phi(x)=\begin{cases}
(1-|x|), \quad |x|<1\\
0, \quad |x|\ge 1,
\end{cases}
$$
and define $\phi_d (x)=d^{-\frac {N}{2s}}\phi\left(\frac{x}{d^{\frac {1}{2s}}}\right)$.
Assume without lost of generality that $0\in\Omega$ and take $d$ small enough in such a way that the ball of radius $d^{\frac {1}{2s}}$  is contained in $\Omega$.

We have by a direct calculation one can check that
$${\frac 12\int\int_{Q(B_{d^{\frac {1}{2s}}})}\dfrac{|\phi_d(x)-\phi_d(y)|^2}{|x-y|^{N+2s}}dxdy\le C(N) d^{-\left(1+\frac N{2s}\right)},} \quad
||\phi_d||_q^q= C_q(N)  d^{\frac N{2s}(1-q)}
$$
Therefore, for constants depending only on the dimension,
$$g(t):=J_d(t\phi_d)=c_1\dfrac 12 d^{-\frac N{2s}}t^2-c_2\dfrac 1{p+1}d^{-\frac N{2s}p}t^{p+1}.$$
It is easy to check that there exists $t_1>0$ such that $g(t_1)=0$. Moreover $g$
verifies that its maximum for $t>0$ is attained in  $t_0=(\frac{c_1}{c_2})^{\frac1{p-1}} d^{\frac N{2s}}<t_1$, {that is}
$$g(t)=J_d(t\phi_d)\le J_d(t_0\phi_d)\le C  d^{\frac N{2s}},$$
{and \eqref{emma} follows as wanted.}
\end{proof}
\begin{remark} The higher order Neumann semilinear problem can be studied  in a similar way. To be precise we consider the case $s=1+\sigma$, $0<\sigma<1$ and leave to the reader the details for  the interval $s>2$. Consider the problem
\begin{equation}\label{eq:p}\tag{$\mathcal P$}
\begin{cases}
d(-\Delta)^s u+u = |u|^{p-1}u &\text{in}\,\,\Omega, \quad 1<s<2\\
\mathcal N_\sigma^1 u  =0&\text{in}\,\,\mathbb{R}^N\setminus\overline\Omega=: \mathcal C\overline\Omega\\
\mathcal N_\sigma^2 u=0 &\text{on}\,\,\partial\Omega,
\end{cases}\end{equation}
for $d>0$, $s=1+\sigma$, $\sigma>0$ and $1< p< 2^*_s$.
The energy functional now is
\begin{equation}\label{energys>1}
J_d(u)=\frac d2\int\int_{Q(\Omega)}\dfrac{|\nabla u(x)-\nabla u(y)|^2}{|x-y|^{N+2\sigma}}dxdy+\frac 12\int_\Omega u^2dx-\frac 1{p+1}\int_\Omega |u|^{p+1} dx.
\end{equation}
Observe that,  as in the case $s<1$, $J_d$ verifies the geometrical and compactness hypotheses to apply the \textit{Mountain Pass Theorem } so taking, for instance,  $\phi(x)=(1-|x|^2)^2_+$ it follows that
$$J_d(\phi_d)\le C d^{\frac{N}{2s}}.$$
Therefore a similar argument as above shows that, for $d$ small enough, the mountain pass critical point is nonconstant.
\end{remark}
\subsubsection{Some open questions}
Among others, the following questions seem to be open and interesting to solve.
\begin{enumerate}
\item Asymptotic behavior of the nonconstant solutions when $d\to 0$, $0<s$.  The local case $s=1$ this problem was  studied for positive solutions
in the pioneering paper \cite{LNT}, where a concentration phenomenon appears in the point of maximum curvature of  $\partial \Omega$.
As far as we know, this result should be new in the local case $s=2$ or higher integer order.
 \item Study of the critical case $p=2^*$ and the behavior of the nonconstant solutions. The local case, $s=1$, was studied in \cite{APY}.
\end{enumerate}

\subsection{A Neumann condition for the  p-Laplacian  operator $(-\Delta)_p^s$ in the standard nonlocal case $\bf{0<s<1 }$}\label{sec:plaplaciano}
Using the variational approach  for the higher order operator developed in Section \ref{sec:variational}, we define a Neumann problem for the nonlinear p-Laplacian nonlocal operator that, to the best of our knowledge, it has not been studied up to now.  Throughout all this section, let us suppose $p\in (1,\infty)$,  $s\in (0,1)$ and let $\Omega\subset {\mathbb R}^N$ be a smooth bounded domain with $N>sp$. For smooth functions,  we define  the fractional $p$-Laplacian operator $(-\Delta)_p^s$ {(see for instance \cite{p1, p2, p3} and the references therein)}, as
\begin{equation}\label{eq:plaplace}
(- \Delta)^s_p\, u(x):= \lim_{\varepsilon \searrow 0} \int_{\mathbb{R}^N \setminus B_\varepsilon(x)}
\frac{|u(x) - u(y)|^{p-2}\, (u(x) - u(y))}{|x - y|^{N+s\,p}}\, dy, \qquad x \in \mathbb{R}^N.
\end{equation}
Moreover for $\mathcal{S}(\mathbb{R}^{N})$, define
\begin{equation}\label{eq:neumannenricop}
\mathcal N_{s,p} v(x):=\int_{\Omega}\frac{|v(x)-v(y)|^{p-2}(v(x)-v(y))}{|x-y|^{N+2s}}\,dy, \qquad x\in \mathbb{R}^N\setminus\overline \Omega,
\end{equation}
namely the non local Neumann condition in the case of the non local p-Laplace operator.  The equation \eqref{eq:neumannenricop} represents  the counterpart in the non local case,  of the the local Neumann condition $|\nabla u|^{p-2}\partial_\nu u$, i.e. the normal component of the flux across the boundary.
Following the proofs of the Proposition \ref{pro:intbyparts1} and of the Proposition \ref{pro:intbyparts2},  using  definitions  \eqref{eq:neumannenricop} and \eqref{eq:plaplace} it can be proved the following
\begin{theorem}\label{eq:nnmiservealtro}
Let $u,v\in \mathcal{S}(\mathbb{R}^{N})$. Then
\begin{equation}\nonumber
\int_{\Omega}(-\Delta)_p^su\,dx=-\int_{\mathbb R^N\setminus \Omega} \mathcal N_{s,p} u \,dx
\end{equation}
and
\begin{eqnarray}\nonumber
&&\frac{1}{2}\int_{Q(\Omega)}\frac{|u(x)-u(y)|^{p-2}(u(x)- u(y))( v(x)-v(y))}{|x-y|^{N+2s}}\, dx\,dy\\\nonumber
&&=\int_{\Omega}v\, (-\Delta)_p^su\,  dx +\int_{\mathbb{R}^N\setminus \Omega} v\, \mathcal {N}_{s,p} u \, dx.
\end{eqnarray}
\end{theorem}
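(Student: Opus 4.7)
The plan is to mirror the two-step strategy used in the proofs of Proposition \ref{pro:intbyparts1} and Proposition \ref{pro:intbyparts2}, with the linear kernel replaced by the nonlinear one
\[
K(x,y):=\frac{|u(x)-u(y)|^{p-2}(u(x)-u(y))}{|x-y|^{N+sp}}.
\]
Two structural facts persist into the nonlinear setting and will drive the whole argument: $K$ is antisymmetric in $(x,y)$, and $K(x,y)\bigl(v(x)-v(y)\bigr)$ is symmetric, so the symmetrization trick from the $p=2$ case applies verbatim.

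The preliminary step is to establish the $p$-Laplacian analogue of Lemma \ref{regional}, namely $\int_\Omega \mathrm{P.V.}\int_\Omega K(x,y)\,dy\,dx=0$ for $u\in\mathcal S(\mathbb R^N)$. I would reproduce the dominated-convergence scheme of Lemma \ref{regional} by splitting $\Omega$ into $\Omega\setminus B_x$ and $B_x\setminus B_\varepsilon(x)$ with $B_x=B_{\rho(x)}(x)$, $\rho(x)=\operatorname{dist}(x,\partial\Omega)$. On the outer part, the Lipschitz bound $|u(x)-u(y)|\le C|x-y|$ yields $|K(x,y)|\le C|x-y|^{p-1-N-sp}$, producing a majorant of the form $\rho(x)^{p-sp-1}+O(1)$, integrable on $\Omega$ since $sp<p$. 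On the inner annulus I would exploit that $t\mapsto|t|^{p-2}t$ is \emph{odd}: inserting the Taylor expansion $u(y)=u(x)+\nabla u(x)\cdot(y-x)+O(|y-x|^2)$ and using symmetry of the annulus around $x$, the odd leading part cancels and the remainder is of order $|x-y|^{p-N-sp}$, which is integrable in $y$ on $B_x$. Antisymmetry of $K$ on $\Omega\times\Omega$ then forces the full integral to vanish in the limit $\varepsilon\to 0^+$.

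Once the regional lemma is in hand, both statements follow by the algebraic manipulations of Propositions \ref{pro:intbyparts1} and \ref{pro:intbyparts2}. For the first identity I split
\[
\int_\Omega(-\Delta)^s_p u\,dx=\int_\Omega\mathrm{P.V.}\int_\Omega K(x,y)\,dy\,dx+\int_\Omega\int_{\mathbb R^N\setminus\Omega}K(x,y)\,dy\,dx,
\]
the first summand being zero by the regional lemma and the second, by Fubini (justified by the boundedness of $(-\Delta)^s_p u$ on $\mathcal S(\mathbb R^N)$) and $K(x,y)=-K(y,x)$, equalling $-\int_{\mathbb R^N\setminus\Omega}\mathcal N_{s,p}u\,dx$. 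For the bilinear identity I combine the antisymmetries of $K$ and of $v(x)-v(y)$ to obtain $\tfrac12\int_{Q(\Omega)}K(x,y)(v(x)-v(y))\,dx\,dy=\int_{Q(\Omega)}K(x,y)v(x)\,dx\,dy$, and then use the disjoint decomposition $Q(\Omega)=(\Omega\times\mathbb R^N)\cup((\mathbb R^N\setminus\Omega)\times\Omega)$ to read off exactly $\int_\Omega v(x)(-\Delta)^s_p u\,dx+\int_{\mathbb R^N\setminus\Omega}v(x)\mathcal N_{s,p}u(x)\,dx$. The main obstacle is the regional lemma itself: for $p\ge 2$ the map $t\mapsto|t|^{p-2}t$ is $C^1$ and the oddness-based cancellation is a clean adaptation of the linear case, but for $1<p<2$ this map is only H\"older at the origin, so the expansion on $B_x\setminus B_\varepsilon(x)$ must be carried out around $\nabla u(x)\cdot(y-x)$ rather than at $0$, with separate estimates at points where $\nabla u(x)\cdot(y-x)$ is small. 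Since $sp<p$ always, the resulting remainder still admits an $L^1(\Omega)$ majorant on any $C^{1,1}$ bounded domain $\Omega$, so the scheme goes through.
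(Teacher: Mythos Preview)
Your proposal is correct and follows precisely the approach the paper itself indicates: the paper's ``proof'' of Theorem~\ref{eq:nnmiservealtro} is the single sentence ``Following the proofs of Proposition~\ref{pro:intbyparts1} and of Proposition~\ref{pro:intbyparts2}, using definitions \eqref{eq:neumannenricop} and \eqref{eq:plaplace} it can be proved the following,'' and your argument is a faithful expansion of exactly that reference, with the antisymmetry of $K$ playing the role that antisymmetry of the linear kernel played before. Your identification of the $p$-analogue of Lemma~\ref{regional} as the only nontrivial step, and of the case $1<p<2$ (where the cancellation on $B_x\setminus B_\varepsilon(x)$ must be organised around the odd leading part $|\nabla u(x)\cdot z|^{p-2}\nabla u(x)\cdot z$ with separate treatment where this is small) as the place requiring extra care, goes beyond what the paper writes out and is the correct diagnosis.
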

Theorem \ref{eq:nnmiservealtro} suggests, as we did in Section \ref{se:weakvaria},  the idea of which should be the correct weak formulation of the nonlocal $p$-Neumann problem in the case $0<s<1$, i.e. it gives the good candidate for the weak form of the p-Laplace operator \eqref{eq:plaplace}.  Now we can give the following
\begin{definition} Let $g\in L^1(\mathbb R^N\setminus\Omega)$, we set
$$
W^{s,p}_{\mathcal N(g)}(\Omega)=\Big\{u:\mathbb R^N\rightarrow {\mathbb R}\,:\, \, u
{\mbox{ measurable }}\quad \text{and}\quad  \|u\|_{W^{s,p}_{\mathcal N(g)}{(\Omega)}}< +\infty\Big\},
$$
where
\begin{equation}\label{eq:pnorm}
\|u\|_{W^{s,p}_{\mathcal N(g)}(\Omega)}=\left (\int_\Omega |u|^p\,dx+ \iint_{Q(\Omega)}\frac{|u(x)-u(y)|^p}{|x-y|^{N+sp}}\, dx dy+ \int_{\mathcal C \Omega}|g||u|^p\, dx\right)^{\frac 1p}.
\end{equation}
\end{definition}
Following the ideas done in \cite[Proposition 3.1]{DRoV} we can be proved the next
\begin{proposition}\label{pro:reflex}
$W^{s,p}_{\mathcal N(g)}(\Omega)$ is a reflexive Banach space.
\end{proposition}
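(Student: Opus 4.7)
The plan is to realize $W^{s,p}_{\mathcal N(g)}(\Omega)$ as a closed subspace of a reflexive product space via a canonical isometric embedding; completeness and reflexivity will then follow simultaneously, since closed subspaces of reflexive Banach spaces are reflexive. Concretely, introduce the positive measure $d\mu_g=(\chi_{\Omega}+|g|\chi_{\mathbb R^{N}\setminus\Omega})\,dx$ on $\mathbb R^{N}$ and the measure $d\nu(x,y)=|x-y|^{-(N+sp)}dx\,dy$ on $Q(\Omega)$, so that the three summands in \eqref{eq:pnorm} are exactly $\|u\|_{L^{p}(d\mu_g)}^{p}$ and $\|u(x)-u(y)\|_{L^{p}(Q(\Omega),d\nu)}^{p}$. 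The natural map
\[
\mathcal T\colon W^{s,p}_{\mathcal N(g)}(\Omega)\longrightarrow L^{p}(\mathbb R^{N},d\mu_g)\times L^{p}(Q(\Omega),d\nu),\qquad \mathcal T(u)=\bigl(u,\,(x,y)\mapsto u(x)-u(y)\bigr),
\]
is linear and isometric by definition of the norm. Since $1<p<\infty$ the target is reflexive, and it suffices to prove that the image $\mathcal T(W^{s,p}_{\mathcal N(g)}(\Omega))$ is closed.

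The core of the argument is therefore this closedness statement, which will also yield completeness of $W^{s,p}_{\mathcal N(g)}(\Omega)$ for free. Given a sequence $\{u_n\}\subset W^{s,p}_{\mathcal N(g)}(\Omega)$ with $\mathcal T(u_n)\to(v,V)$ in the product, I would extract a subsequence so that $u_n\to v$ a.e.\ on $\Omega\cup\{|g|>0\}$ (from the convergence in $L^{p}(d\mu_g)$) and $u_n(x)-u_n(y)\to V(x,y)$ for a.e.\ $(x,y)\in Q(\Omega)$. The delicate point, and the main obstacle, is the set $S:=\{x\in\mathbb R^{N}\setminus\overline\Omega:g(x)=0\}$, where the sequence $u_n$ itself is not directly controlled by the norm, so a representative of the limit has to be reconstructed from the differences. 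To do so, I would use Fubini to pick $y_0\in\Omega$ at which $u_n(y_0)\to v(y_0)$ and $u_n(x)-u_n(y_0)\to V(x,y_0)$ for a.e.\ $x\in S$, and then define
\[
u(x):=\begin{cases}v(x),& x\in\Omega\cup\{|g|>0\},\\ v(y_0)+V(x,y_0),& x\in S.\end{cases}
\]
With this choice $u_n\to u$ a.e.\ on $\mathbb R^{N}$, so passing to the limit in the difference gives $u(x)-u(y)=V(x,y)$ for a.e.\ $(x,y)\in Q(\Omega)$, while $u=v$ in $L^{p}(d\mu_g)$ by construction. Hence $\mathcal T(u)=(v,V)$ and $u\in W^{s,p}_{\mathcal N(g)}(\Omega)$, proving closedness.

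The preliminary step of verifying that \eqref{eq:pnorm} is actually a norm is routine: $\|u\|=0$ forces $u\equiv0$ on $\Omega$ from the first summand, and the Gagliardo-type term then yields $u(x)=u(y)$ for a.e.\ $(x,y)\in\Omega\times(\mathbb R^{N}\setminus\Omega)\subset Q(\Omega)$, which by Fubini propagates the vanishing to $\mathbb R^{N}\setminus\Omega$; subadditivity comes from Minkowski's inequality applied to each of the two $L^{p}$ pieces defining $\mathcal T$. Once closedness of $\mathcal T(W^{s,p}_{\mathcal N(g)}(\Omega))$ is established, reflexivity of $W^{s,p}_{\mathcal N(g)}(\Omega)$ is immediate and completeness is a by-product, so the whole difficulty concentrates in the pointwise reconstruction on $S$ described above.
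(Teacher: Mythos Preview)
Your proposal is correct and follows essentially the same strategy as the paper: realize $W^{s,p}_{\mathcal N(g)}(\Omega)$ as a closed subspace of a reflexive $L^p$-product via the canonical isometric embedding $u\mapsto (u,\,u(x)-u(y))$. The only difference is the logical order: the paper first establishes completeness (by reference to \cite[Proposition~3.1]{DRoV}) and then deduces that the image of the isometry is closed, whereas you prove closedness of the image directly---doing by hand the pointwise reconstruction on $\{g=0\}$---and obtain completeness as a by-product.
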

\begin{proof}
We sketch the proof. We can readily  check that \eqref{eq:pnorm} is a norm and, arguing  as in \cite[Proposition 3.1]{DRoV}, that  $W^{s,p}_{\mathcal N(g)}(\Omega)$ is a Banach space.  To prove that it is reflexive let us define the space $\mathcal A= L^p(Q(\Omega), dxdy)\times L^p(\Omega,dx) \times L^p(\mathbb{R}^N\setminus\Omega,|g|dx).$ By standard results, this product space is reflexive. Then the operator $T:W^{s,p}_{\mathcal N(g)}(\Omega) \rightarrow \mathcal A$ defined as
\[T u =\left[\frac{|u(x)-u(y)|}{|x-y|^{\frac Np+s}}\chi_{Q(\Omega)}(x,y),\,u \chi_{\Omega}, \, u \chi_{\mathbb R^N\setminus\Omega}\right],\]
where $\chi_\mathcal S(\cdot)$ denotes the characteristic function of a measurable set $\mathcal S$, is an isometry from $W^{s,p}_{\mathcal N(g)}(\Omega)$ into $\mathcal A$ (the space $\mathcal A$ is also equipped  with the norm in \eqref{eq:pnorm}).  Thus, since  $W^{s,p}_{\mathcal N(g)}(\Omega)$ is a Banach space, $T(W^{s,p}_{\mathcal N(g)}(\Omega))$ is a closed subspace of $\mathcal A$, so that, $T(W^{s,p}_{\mathcal N(g)}(\Omega))$  is reflexive and therefore, using the fact that $T$ is an isometry, $W^{s,p}_{\mathcal N(g)}(\Omega)$ as well.
\end{proof}
Thanks to the previous result, we can use the variational arguments developed in, for instance \cite{dibenedetto}, to get the existence and uniqueness result for the $(-\Delta)^s_{p}$ operator, $0<s<1$. That is, the following
\begin{theorem}\label{main2}
Let $\Omega\subset\mathbb R^N$ a bounded $C^1$ domain and let us suppose that $f\in L^{p'}(\Omega)$, $\frac{1}{p}+\frac{1}{p'}=1$, and {$g\in L^{\infty}_c(\mathbb{R}^N\setminus \Omega)$}.
Then, the problem
$$
\begin{cases}
(-\Delta)^s_p u = f(x) &\text{in}\,\,\Omega\\
\mathcal N_{s,p} u  =g&\text{in}\,\,\mathbb{R}^N\setminus\overline\Omega,
\end{cases}
$$
has a weak solution, that is,
$$\frac{1}{2}\int_{Q(\Omega)}\frac{|u(x)-u(y)|^{p-2}(u(x)- u(y))( v(x)-v(y))}{|x-y|^{N+2s}}\, dx\,dy=\int_{\Omega}f\, v dx +\int_{\mathbb{R}^N\setminus \Omega} g\, v \, dx,\, \forall v \in W^{s,p}_{\mathcal N(g)}(\Omega),$$
 if and only if the following compatibility condition holds
\begin{equation}\label{eq:unodiez_p}
\int_{\Omega} f \, dx +\int_{\mathbb R^N\setminus \Omega}g \, dx=0.
\end{equation}
Moreover, if \eqref{eq:unodiez_p} holds,  the solution is unique up to a constant $c\in \mathbb R^N.$
\end{theorem}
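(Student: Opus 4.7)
The plan is to mimic the variational scheme used for Theorem \ref{main} (and, in the $0<s<1$ linear case, in \cite{DRoV}), with bilinearity replaced by the monotonicity of the map $\tau\mapsto|\tau|^{p-2}\tau$. The necessity of \eqref{eq:unodiez_p} is immediate: the constant $v\equiv 1$ lies in $W^{s,p}_{\mathcal N(g)}(\Omega)$ and produces vanishing increments, so testing the weak formulation against it forces $\int_\Omega f\,dx+\int_{\mathbb R^N\setminus\Omega}g\,dx=0$. For sufficiency I would introduce the energy
\[J(u)=\frac{1}{2p}\iint_{Q(\Omega)}\frac{|u(x)-u(y)|^p}{|x-y|^{N+sp}}\,dx\,dy-\int_\Omega f\,u\,dx-\int_{\mathbb R^N\setminus\Omega}g\,u\,dx,\]
and observe that, under the compatibility condition, $J(u+c)=J(u)$ for every $c\in\mathbb R$, so $J$ descends to the quotient space $\mathcal{W}:=W^{s,p}_{\mathcal N(g)}(\Omega)/\mathbb R$ equipped with the Gagliardo seminorm. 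A standard computation of the Gateaux derivative (using dominated convergence in the difference quotient, together with the convexity of $\tau\mapsto|\tau|^p$) shows that critical points of $J$ are precisely weak solutions in the sense of Theorem~\ref{main2}.

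The key analytic ingredient is a Poincar\'e-type inequality on $\mathcal{W}$: there exists $C>0$ such that
\[\int_{\mathbb R^N}|u|^p\,d\mu_g\;\le\;C\iint_{Q(\Omega)}\frac{|u(x)-u(y)|^p}{|x-y|^{N+sp}}\,dx\,dy,\qquad d\mu_g=\chi_\Omega\,dx+|g|\chi_{\mathbb R^N\setminus\Omega}\,dx,\]
for every representative normalized by $\int_\Omega u\,dx=0$. I would prove it by contradiction, in the spirit of Lemma \ref{equivalencia}: if it fails, there is a sequence $u_n$ with $\int|u_n|^p\,d\mu_g=1$ whose Gagliardo seminorm tends to zero; since $g\in L^\infty_c(\mathbb R^N\setminus\Omega)$, the measure $d\mu_g$ is supported in the bounded set $\Omega\cup\operatorname{supp}(g)$, and a fractional Rellich argument on a ball containing this set extracts a subsequence converging strongly in $L^p(d\mu_g)$ to a limit with both vanishing seminorm on $Q(\Omega)$ (hence constant) and vanishing $\Omega$-mean, so identically zero, contradicting $\int|u_n|^p\,d\mu_g=1$. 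Combined with H\"older's inequality to control the linear terms via the hypotheses $f\in L^{p'}(\Omega)$ and $g\in L^\infty_c$, this yields coercivity of $J$ on $\mathcal{W}$. Convexity of $J$ gives weak lower semicontinuity, and the reflexivity of $W^{s,p}_{\mathcal N(g)}(\Omega)$ (Proposition~\ref{pro:reflex}) produces a minimizer, hence a weak solution.

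Uniqueness up to an additive constant follows from the strict monotonicity of $\Phi(\tau):=|\tau|^{p-2}\tau$: if $u_1,u_2$ are two weak solutions, subtracting the two formulations and testing against $u_1-u_2\in W^{s,p}_{\mathcal N(g)}(\Omega)$ yields
\[\iint_{Q(\Omega)}\frac{\bigl(\Phi(u_1(x)-u_1(y))-\Phi(u_2(x)-u_2(y))\bigr)\bigl((u_1-u_2)(x)-(u_1-u_2)(y)\bigr)}{|x-y|^{N+sp}}\,dx\,dy=0,\]
and pointwise monotonicity of $\Phi$ forces the increments of $u_1$ and $u_2$ to coincide a.e.\ on $Q(\Omega)$, so $u_1-u_2$ is a.e.\ constant on $\mathbb R^N$. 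I expect the Poincar\'e step to be the main obstacle: the compactness argument has to simultaneously manage the $L^p(\Omega)$-strong convergence and the weighted $|g|$-piece outside $\Omega$, and because functions in $W^{s,p}_{\mathcal N(g)}(\Omega)$ are only controlled on $\Omega\cup\operatorname{supp}(g)$ (being free to equal arbitrary constants elsewhere), some care is required to see that the quotient $\mathcal{W}$ is an honest reflexive Banach space on which $J$ is genuinely coercive.
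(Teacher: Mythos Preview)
Your proposal is correct and follows essentially the same route as the paper: the paper also passes to the zero-mean subspace $\widetilde W^{s,p}_{\mathcal N(g)}(\Omega)$ via the compatibility condition, proves the Poincar\'e inequality by precisely the contradiction/compactness argument you outline (Lemma~\ref{pro:poincplapl}), and then applies the direct method as in Theorem~\ref{key_th}. Your explicit strict-monotonicity argument for uniqueness is more detailed than what the paper actually writes down, but the overall strategy coincides.
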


The proof of the previous result can be done using the same minimization techniques developed in the proof Theorem \ref{key_th} under hypothesis $(\mathcal A_{(f,g_1,g_2)})-Case \, A$ once we prove a Poincar\'e-type inequality in the Banach space
\begin{equation}\label{eq:defspazlasecondvoltnat}
\widetilde{W}^{s,p}_{\mathcal N(g)}(\Omega):=\{u\in W^{s,p}_{\mathcal N(g)}(\Omega):\,  \int_{\Omega} u\, dx=0\}.
\end{equation}
This inequality will allow us to affirm that the norm in $\widetilde{W}^{s,p}_{\mathcal N(g)}(\Omega)$ defined as
$$\|u\|^p_{\widetilde{W}^{s,p}_{\mathcal N(g)}(\Omega)}:=\int_{Q(\Omega)}\frac{|u(x)- u(y)|^p}{|x-y|^{N+sp}}\, dx dy,$$
is equivalent to the one in ${W}^{s,p}_{\mathcal N(g)}(\Omega)$ given in \eqref{eq:pnorm}.
\begin{lemma}\label{pro:poincplapl}
For every
$v\in \widetilde{W}^{s,p}_{\mathcal N(g)}(\Omega)$ if $g\in L^{\infty}_c(\mathbb{R}^N\setminus \Omega)$ then the following Poincar\'e-type inequality holds
%%%%%%%%%
\begin{equation}\nonumber
\int_{\Omega}|v|^p\, dx+\int_{\mathbb{R}^N\setminus \Omega}|g||v|^p\, dx\leq C(N,s,\Omega)\iint_{Q(\Omega)}\frac{|v(x)- v(y)|^p}{|x-y|^{N+sp}}\, dx dy.
\end{equation}
\end{lemma}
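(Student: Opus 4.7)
The plan is to argue by contradiction, adapting the blueprint of Lemma~\ref{equivalencia} to the nonlinear setting. If the inequality fails then, after a standard normalization, one can extract a sequence $\{v_k\}\subset \widetilde{W}^{s,p}_{\mathcal N(g)}(\Omega)$ with
$$
\int_\Omega |v_k|^p\, dx + \int_{\mathbb{R}^N\setminus\Omega}|g||v_k|^p\, dx = 1,
\qquad
[v_k]^p := \iint_{Q(\Omega)} \frac{|v_k(x)-v_k(y)|^p}{|x-y|^{N+sp}}\, dx\, dy \longrightarrow 0.
$$
Since $\Omega\times\Omega\subset Q(\Omega)$, the full Gagliardo seminorm on $\Omega$ is bounded by $[v_k]^p$, so $\{v_k\}$ is bounded in $W^{s,p}(\Omega)$. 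By the fractional Rellich compactness theorem \cite[Theorem 7.1]{DnPV} a subsequence converges strongly to some $v^\star$ in $L^p(\Omega)$ and weakly in $W^{s,p}(\Omega)$; lower semicontinuity of the seminorm forces $v^\star$ to be a.e.\ constant on $\Omega$, and the zero-mean constraint $\int_\Omega v_k = 0$ passes to the limit to give $v^\star\equiv 0$. In particular $\|v_k\|_{L^p(\Omega)}\to 0$.

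The decisive step is then to show that also $\int_{\mathbb R^N\setminus\Omega}|g||v_k|^p\, dx\to 0$, which combined with the previous paragraph contradicts the normalization. Let $K$ denote the essential support of $g$ inside $\mathbb R^N\setminus\Omega$ and set $R:=\mathrm{diam}(K\cup\overline\Omega)$. For every $x\in K$, the triangle inequality and convexity of $t\mapsto t^p$ yield
$$
|v_k(x)|^p \le \frac{2^{p-1}}{|\Omega|}\left(\int_\Omega |v_k(y)|^p\, dy + \int_\Omega |v_k(x)-v_k(y)|^p\, dy\right).
$$
Multiplying by $|g(x)|$, integrating over $K$, using $\|g\|_\infty<\infty$, and bounding $|x-y|^{-(N+sp)}\ge R^{-(N+sp)}$ on $K\times\Omega$, one obtains
$$
\int_{\mathbb R^N\setminus\Omega}|g||v_k|^p\, dx
\le C_1\,\|v_k\|_{L^p(\Omega)}^p + C_2 \iint_{K\times\Omega}\frac{|v_k(x)-v_k(y)|^p}{|x-y|^{N+sp}}\, dy\, dx,
$$
with $C_1,C_2$ depending on $N,s,p,|\Omega|,|K|,\|g\|_\infty$ and $R$. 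Since $K\times\Omega\subset Q(\Omega)$, the last double integral is dominated by $[v_k]^p\to 0$, and the first term vanishes by the previous paragraph. The contradiction follows.

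The conceptual obstacle is exactly the control of the weighted $L^p$-mass on the exterior: the seminorm on $Q(\Omega)$ only couples pairs with at least one endpoint in $\Omega$, so a priori it tells us nothing about the behaviour of $v_k$ far from $\Omega$. It is the compact support of $g$ (which keeps $K$ at a bounded distance from $\Omega$) together with the $L^\infty$ bound that enable us to ``transport" the smallness of $\|v_k\|_{L^p(\Omega)}$ out to $K$ through the Gagliardo kernel on $K\times\Omega\subset Q(\Omega)$. Without some form of compact support or exterior decay hypothesis on $g$ the preceding chain of inequalities breaks down, which is why the assumption $g\in L^\infty_c(\mathbb R^N\setminus\Omega)$ appears in the statement.
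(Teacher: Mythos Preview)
Your proof is correct and follows the same contradiction scheme as the paper, relying on the identical core estimate that controls $|v_k(x)|^p$ for $x$ outside $\Omega$ by $\|v_k\|_{L^p(\Omega)}^p$ plus the Gagliardo energy on $(\mathbb{R}^N\setminus\Omega)\times\Omega\subset Q(\Omega)$. The only cosmetic difference is that the paper uses this estimate to show $\{v_k\}$ is Cauchy in $L^p(B)$ for a large ball $B\supset\Omega\cup\operatorname{supp} g$ and then passes to the limit in the normalization, whereas you first identify the $L^p(\Omega)$ limit as zero and then bound the exterior weighted integral directly; your route is slightly more quantitative but otherwise equivalent.
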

\begin{proof}
Following the proof of \eqref{poincare} let us suppose, by contradiction, that there exists, up to a renormalization, a sequence $\{v_k\}\subset \widetilde{W}^{s,p}_{\mathcal N(g)}(\Omega)$ such that
\begin{equation}\label{pplol}
\int_{\Omega}|v_k|^p\, dx+\int_{\mathbb{R}^N\setminus \Omega}|g||v_k|^p\, dx=1\,\, \mbox{and}\,\, \iint_{Q(\Omega)}\frac{|v_k(x)-v_k(y)|^{p}}{|x-y|^{N+sp}}\, dx dy<\frac{1}{k}.
\end{equation}
Using now that the embedding of $W^{s,p}(\Omega)$ is compact (see \cite{DnPV}), it follows that, up to subsequence, there exists  $v\in L^p(\Omega)$ such that
\begin{equation}\label{porq}
v_{k}\rightarrow v,\qquad\text{in}\,\, L^p(\Omega).
\end{equation}
Moreover if we take a ball $B$ centered at the origin with $\Omega \subset B$  we get by elementary inequalities that
\begin{equation}\label{eq:cauchyyyyyyseqvk}
\fint_B |v_k|^p\,dx\leq C(s,p,B, N, \Omega)\fint_{B}\fint_{\Omega}\frac{|v_k(x)-v_k(y)|^{p}}{|x-y|^{N+sp}}\, dx dy+\fint_\Omega |v_k|^p\,dx
\end{equation}
By \eqref{pplol}- \eqref{eq:cauchyyyyyyseqvk} we deduce that for all $\varepsilon>0$, there exists $\overline k$ such that  for all $m,k>\overline k$
\[\int_{B}|v_k-v_m|^p\, dx< \varepsilon,\]
namely in particular $v_k$ is a Cauchy sequence in $L^p(B)$ and therefore, up to a subsequence, $v_k$ converges to some $v$ in $L^p(B)$ and a.e. in B. Passing to the limit in \eqref{pplol}, by  the lower semicontinuity of the norm w.r.t. the weak convergence, on one hand we get that $v$ must be a constant in $\mathbb{R}^N$ and on the other hand that
\[\int_{\Omega}|v|^p\, dx+\int_{\mathbb{R}^N\setminus \Omega}|g||v|^p\, dx=1,\]
that is a contradiction with \eqref{eq:defspazlasecondvoltnat}.
\end{proof}
Now we can give the
\begin{proof}[Proof of Theorem \ref{main2}]
For every $u\in {W}^{s,p}_{\mathcal N(g)}(\Omega)$ we define the nonlinear functional
$$J_p(u):= \frac1p \int_{Q(\Omega)}\frac{|\nabla u(x)-\nabla u(y)|^p}{|x-y|^{N+sp}}\, dx dy-\int_{\Omega}f{u}\,dx-\int_{\mathbb R^N\setminus \Omega}g{u}\, dx.$$
We note that, by the compatibility condition \eqref{eq:unodiez_p}, it follows that  $J(u)=J(u-\overline u)$, where
$$\overline u =\frac{1}{|\Omega|}\int_{\Omega}u\, dx.$$
Therefore $J_p$ can be defined in the space $\widetilde{W}^{s,p}_{\mathcal N(g)}(\Omega)$. Following the proof of Theorem \ref{key_th} and the Poincar\'e inequality given in Lemma \ref{pro:poincplapl} the result follows.
\end{proof}

\subsubsection{Some open questions}{Among many other possible choices, we think that
it would  be very interesting to find a natural Neumann condition for the operator
\begin{equation}\label{eq:plaplace1+}
(- \Delta)^s_p\, u(x):= -\operatorname{div} \left (\lim_{\varepsilon \searrow 0} \int_{\mathbb{R}^N \setminus B_\varepsilon(x)}
\frac{|\nabla u(x) - \nabla u(y)|^{p-2}\, (\nabla u(x) -\nabla u(y))}{|x - y|^{N+\sigma\,p}}\, dy, \qquad x \in \mathbb{R}^N\right),
\end{equation}
where $s=1+\sigma$.
}

\end{document}